\newtheorem{theorem}{Theorem}
\newtheorem{proposition}[theorem]{Proposition}
\newtheorem{lemma}[theorem]{Lemma}
\newtheorem{corollary}[theorem]{Corollary}
\newtheorem{remark}[theorem]{Remark}
\newcommand{\R}{\mathbb{R}}
\newcommand{\Ee}{\mathbb {E}}
\newcommand{\Q}{\mathbb{Q}}
\newcommand{\Sf}{\mathbb{S}}
\newcommand{\Le}{\mathbb{L}}
\newcommand{\Ve}{\mathbb{V}}
\newcommand{\Hy}{\mathbb{H}}
\newcommand{\Oes}{\mathbb{O}}
\newcommand{\spa}{\mbox{span}}
\def\<{\langle}
\def\>{\rangle}
\def\va{\varphi}
\def\bea{\begin{eqnarray*} }
\def\eea{\end{eqnarray*} }
\def\be{\begin{equation} }
\def\ee{\end{equation} }
\def\proof{\noindent{\it Proof: }}
\def\qed{\ifhmode\unskip\nobreak\fi\ifmmode\ifinner
\else\hskip5 pt \fi\fi\hbox{\hskip5 pt \vrule width4 pt
height6 pt  depth1.5 pt
\hskip 1pt }}
\begin{document}

\title{ Umbilical submanifolds of $\Hy^k\times \Sf^{n-k+1}$.}
\maketitle

\begin{center}
\author{M. I. Jimenez        \and
        R. Tojeiro$^*$}  
        \footnote{Corresponding author}
      \footnote{The first author is supported by CAPES-PNPD Grant 88887.469213/2019-00. The second author is partially supported by Fapesp grant 2016/23746-6 and CNPq grant 303002/2017-4.\\
      Data availability statement: Not applicable.}
\end{center}
\date{}

\begin{abstract}
 In this article we complete the classification of the umbilical submanifolds of a Riemannian product of space forms, addressing the case of a  conformally flat product $\Hy^k\times \Sf^{n-k+1}$, which has not been covered in previous works on the subject. We show that  there exists precisely a $p$-parameter family of congruence classes of  umbilical submanifolds of  $\Hy^k\times \Sf^{n-k+1}$ with substantial codimension~$p$, which we prove to be at most $\mbox{min}\,\{k+1, n-k+2\}$. We study more carefully the cases of codimensions one and two and exhibit, respectively, a  one-parameter family and a two-parameter family (together with three extra one-parameter families) that contain precisely one representative of each congruence class of such submanifolds. In particular, this yields another proof of the classification of all (congruence classes of) umbilical submanifolds of $\Sf^n\times \R$, and provides a similar classification for the case of $\Hy^n\times \R$. We determine all possible topological types, actually, diffeomorphism types, of a complete umbilical submanifold of $\Hy^k\times \Sf^{n-k+1}$. We also show that umbilical submanifolds of the product model of $\mathbb{H}^k\times \mathbb{S}^{n-k+1}$ can be regarded as rotational submanifolds in a suitable sense, and explicitly describe their profile curves when $k=n$. As a consequence of our investigations, we prove that every conformal diffeomorphism of $\Hy^k\times \Sf^{n-k+1}$ onto itself is an isometry.
\end{abstract}

\noindent \emph{2020 Mathematics Subject Classification:} 53 B25, 53C40.\vspace{2ex}

\noindent \emph{Key words and phrases:} {\small {\em umbilical submanifolds, conformally flat products of space forms.}}

\date{}
\maketitle

\section{Introduction}  
An isometric immersion $f\colon M\to N$  of a Riemannian manifold $M$ into another Riemannian manifold $N$ is \emph{umbilical} if there exists a normal vector field $H$ along $f$ such that its second fundamental form 
$\alpha_f\in \Gamma(T^*M\otimes T^*M\otimes N_f M)$
with values in the normal bundle is given by $\alpha_f(X,Y)=\<X,Y\>H$ 
for all $X,Y\in \mathfrak{X}(M)$. 
Geometrically, this roughly means that $f(M)$ is equally curved in $N$ in all tangent directions. Umbilical submanifolds are the simplest possible submanifolds of a Riemannian manifold after the totally geodesic ones, for which $\alpha_f$ vanishes identically.

It is apparent that a generic  Riemannian manifold does not carry any umbilical submanifold of dimension $n\geq 2$. For instance, it was shown by Souam and Toubiana \cite{st} that of the eight three-dimensional model geometries of Thurston, only the spaces of constant curvature, the product spaces $\Sf^2\times \R$ and $\Hy^2\times \R$, and the $\emph{Sol}$ geometry admit umbilical surfaces, in the latter case a unique non totally geodesic one, up to ambient isometries. Here $\Sf^n$ and $\Hy^n$ stand for the sphere and hyperbolic space of dimension $n$, respectively. On the other hand, by a result of Leung and Nomizu \cite{LN} (see also Section $1.7$ of \cite{dt}), if $N$ is a Riemannian manifold that satisfies the axiom of $r$-spheres, that is, for any point $x\in N$ and for any integer $r\geq 2$, there always exists an extrinsic sphere  of $N$ tangent at $x$ to any prescribed $r$-dimensional subspace $V\subset T_xN$, then $N$ must have constant sectional curvature. Recall that an extrinsic sphere of a Riemannian manifold $N$ is an umbilical submanifold of $N$ whose mean curvature vector field is parallel in the normal connection. It is also pointed out in \cite{LN} that if the requirement  on the mean curvature vector field  is dropped in the axiom of $r$-spheres, then the corresponding weaker axiom for $n > 4$ and $3 < r < n$  implies that M is conformally flat. 

 Besides the spaces of constant curvature, there are few Riemannian manifolds for which a classification of all its umbilical submanifolds is known. For a general symmetric space $N$, it was shown by Nikolayevsky (see Theorem $1$ of \cite{Ni}) that any umbilical submanifold of  $N$ is an umbilical submanifold of a product of space forms totally geodesically embedded in $N$. This was one of our motivations for studying  umbilical submanifolds of a product of space forms.
 
For submanifolds of dimension $m\geq 3$ of a product $\Q_{k_1}^{n_1}\times \Q_{k_2}^{n_2}$ of space forms  whose curvatures satisfy $k_1+k_2\neq 0$, the problem was reduced in \cite{mt1} to the classification of $m$-dimensional umbilical submanifolds  with codimension two of $\Sf^n\times \R$ and $\Hy^n\times \R$. The case of  $\Sf^n\times \R$  was carried out in \cite{mt2}, extending previous results in \cite{st} and \cite{vv} for hypersurfaces. The case $m=2$ was  studied in \cite{ot}, where it was shown that, in addition to the examples that appear already in higher dimensions, there are precisely two distinct two--parameter families of complete embedded flat umbilical surfaces that lie substantially in $\Hy_{k}^{3}\times \R^2$ and $\Hy_{k_1}^{3}\times \Hy_{k_2}^{3}$, respectively.

   In this article we address the classification of the umbilical submanifolds of the products  $\Q_{k_1}^{n_1}\times \Q_{k_2}^{n_2}$ of space forms  in the cases that have not been covered in  \cite{mt1}, \cite{mt2} and  \cite{ot}, namely, the cases in which either $k_1+k_2= 0$ or $k_1<0$ and $n_2=1$. Together with the case in which $k_1>0$ and $n_2=1$, already considered in \cite{mt2}, these are precisely the products of Riemannian manifolds that are conformally flat. In fact, there exists a global conformal diffeomorphism of  
$\Hy^k\times \Sf^{n-k+1}$ onto the complement $\R^{n+1}\setminus \R^{k-1}$ of a $(k-1)$-dimensional subspace of $\R^{n+1}$ (see Section $3$). Since umbilical submanifolds are preserved under a conformal change of the metric of the ambient manifold, one might consider this to imply already a classification of the umbilical submanifolds of $\Hy^k\times \Sf^{n-k+1}$: any umbilical isometric immersion $f\colon M^m \to \Hy^k\times \Sf^{n-k+1}$ is produced by composing  the standard  inclusion into $\R^{n+1}\setminus \R^{k-1}$ of an open subset of either $\Sf^m$ or $\R^m$ with the inverse of such diffeomorphism. 
  
  However, for a complete classification of the umbilical submanifolds of  $\Hy^k\times \Sf^{n-k+1}$, one should be able to decide when two such umbilical isometric immersions $f, \tilde f\colon M^m \to \Hy^k\times \Sf^{n-k+1}$ are congruent, that is, when there exists an isometry $\Phi\colon \Hy^k\times \Sf^{n-k+1}\to \Hy^k\times \Sf^{n-k+1}$ such that $\tilde f=\Phi\circ f$. In other words, a complete classification of the umbilical submanifolds of  $\Hy^k\times \Sf^{n-k+1}$ requires a description of all congruence classes of such  submanifolds as well as the knowledge of a representative of each of them.  Other natural questions one may pose are: what is the maximal substantial codimension of an umbilical submanifold of $\Hy^k\times \Sf^{n-k+1}$, that is, the maximal codimension of  an umbilical submanifold  of $\Hy^k\times \Sf^{n-k+1}$ that is not contained in a totally geodesic submanifold? What are its geometry and topology?

     We now outline the contents of the paper and the results we have obtained while investigating these problems. In Section $2$, we recall some basic facts on the model of Euclidean space $\R^{n+1}$ as a hypersurface of the light cone in the Lorentz space 
$\Le^{n+3}$, a convenient setting to deal with such problems. In Section $3$, we introduce the conformal model $(\R^{n+1}\setminus \R^{k-1}, g)$ of $\Hy^k\times \Sf^{n-k+1}$, and describe its isometries (Theorem~\ref{isometries}) and totally geodesic submanifolds (Proposition \ref{prop:totgeo}). As a consequence of independent interest of Theorem \ref{isometries}, we see that every conformal diffeomorphism of $\Hy^k\times \Sf^{n-k+1}$ is an isometry (Corollary \ref{cor:iso}), which extends a similar result by Souam and Toubiana \cite{st} for the particular cases of the product spaces $\mathbb{S}^2\times \mathbb{R}$ and $\mathbb{H}^2\times \mathbb{R}$ as well as a well-known property of hyperbolic space. The conformal model $(\R^{n+1}\setminus \R^{k-1}, g)$ of $\Hy^k\times \Sf^{n-k+1}$ also allows us to easily determine all possible topological types, actually, diffeomorphism types, of a complete umbilical submanifold of $\Hy^k\times \Sf^{n-k+1}$ (Proposition \ref{prop:topology}).

In Section $4$ we address the congruence problem for umbilical submanifolds  of 
 $\Hy^k\times \Sf^{n-k+1}$. First we show that any such  submanifold  with codimension $p$ arises as the intersection of $\Hy^k\times \Sf^{n-k+1}\subset \Le^{n+3}$ with a codimension $p$ time-like subspace of $\Le^{n+3}$ (Proposition \ref{intersection}). Then we obtain an algebraic criterion to decide when two such submanifolds are congruent and use it to prove that there exists precisely a $p$-parameter family of congruence classes of  umbilical submanifolds of $\Hy^k\times \Sf^{n-k+1}$ with substantial codimension~$p$ (Theorem~\ref{thm:congclasses}). Along the way, we show that the substantial codimension of an  umbilical submanifold  of $\Hy^k\times \Sf^{n-k+1}$ is at most $\mbox{min}\,\{k+1, n-k+2\}$ (Corollary~\ref{substantial}). Then we study more carefully the cases of codimensions one and two. We give necessary and sufficient conditions for two umbilical submanifolds  of $(\R^{n+1}\setminus \R^{k-1}, g)$ with codimensions one and two to be congruent in terms of their Euclidean data (Propositions \ref{eq:congcond} and \ref{eq:congcondb}, respectively) and exhibit, respectively, a  one-parameter family and a two-parameter family (together with three extra one-parameter families) that contain precisely one representative of each congruence class of such submanifolds (Corollaries \ref{cor:hypcase} and \ref{cor:cod2case}, respectively). In particular, we recover the main result of \cite{mt1} for the particular case of $\mathbb{S}^n\times \mathbb{R}$ (Corollary \ref{cor:cod2snr}) and obtain, as a particular case of Corollary \ref{cor:cod2case}, a similar classification of all (congruence classes of) umbilical submanifolds of $\Hy^n\times \R$. 

       In Section $5$,  we show that umbilical submanifolds of the product model of
$\mathbb{H}^k\times \mathbb{S}^{n-k+1}$ can be regarded as rotational submanifolds in a suitable sense. In particular, umbilical submanifolds of $\mathbb{H}^n\times \mathbb{S}^{1}$ (and hence of its universal cover $\mathbb{H}^n\times \mathbb{R}$) are rotational submanifolds with curves as profiles, and the latter are explicitly described by means of the conformal model of $\mathbb{H}^n\times \mathbb{S}^{1}$, providing an alternative description to that in \cite{mt1} of the profile curves of umbilical submanifolds of $\mathbb{S}^n\times \mathbb{R}$.

  \section{Preliminaries}
 
  In this section we summarize some basic facts on the model of Euclidean space $\R^{n+1}$ as a hypersurface of the light cone in the Lorentz space $\Le^{n+3}$ that make it a useful tool to study  Moebius geometric problems. We refer to Chapter $9$ of \cite{dt} for details.
 
 Let $\Le^{n+3}$ be the $(n+3)$-dimensional Lorentzian space
with the metric induced by the inner product
$$
\<v,w\>=-v_0w_0+v_1w_1+\cdots+v_{n+2}w_{n+2}
$$
for $v=(v_0,\dots,v_{n+2})$ and $w=(w_0,\ldots,w_{n+2})$.
The set of light-like vectors
$$
\Ve^{n+2}=\{v\in\Le^{n+3}:\<v,v\> =0,\;v\neq 0\} 
$$  
is called  the \emph{light cone}\index{Light-cone} of $\Le^{n+3}$. 
For any given $w\in\Ve^{n+2}$, the intersection
$$
\Ee^{n+1}=\{v\in\Ve^{n+2}:\<v,w\>=1\}
$$
of $\Ve^{n+2}$ with the affine hyperplane 
$
\{u\in\Le^{n+3}:\<u,w\>=1\}
$
is a model of the $(n+1)$-dimensional Euclidean space.
Namely, for a fixed $v\in \Ee^{n+1}$ and a linear isometry 
$
C\colon\R^{n+1}\to(\spa\{v,w\})^\perp\subset\Le^{n+3},
$
the map $\Psi\colon\R^{n+1}\to\Le^{n+3}$,
given by
\be\label{eq:Psi}
\Psi(x)=v+Cx-\frac{1}{2}\|x\|^2w,
\ee
is an isometric embedding such that $\Psi(\R^{n+1})=\Ee^{n+1}$. From now on we assume that a triple $(v,w,C)$ has been fixed with $w_0<0$, so that 
$\Ee^{n+1}\subset\Ve^{n+2}_+$, the upper half of $\Ve^{n+2}$.

 For any  hypersphere $\mathcal{S}\subset\R^{n+1}$ 
with (constant) mean curvature $h$ with respect to a  unit normal vector
field $N$, the map
$$
x\in \mathcal{S}\mapsto \Psi_*(x)N(x)+h\Psi(x)\in \mathbb{L}^{n+3}
$$ 
has a constant value $z$, a unit space-like vector satisfying 
$\<\Psi(x),z\>=0$ for all $x\in \mathcal{S}$. It follows that
$
\Psi(\mathcal{S})=\Ee^{n+1}\cap \{z\}^\perp.
$
Observe that $\mathcal{S}$ is an affine hyperplane  if and only if 
$
0=h=\< z,w\>.
$

If $\mathcal{S}$ is a hypersphere  with (Euclidean) center $x_0$
and radius $r$, oriented by its inward pointing unit normal vector field 
$
x\in \mathcal{S}\mapsto N(x)=\frac{1}{r}(x_0-x)
$
with corresponding mean curvature  $h=1/r$, then its associated unit space-like vector is   
$
z=\frac{1}{r} \Psi(x_0)+\frac{r}{2}w.
$
On the other hand, the unit space-like vector associated to an affine  hyperplane $\mathcal{S}$
oriented by a unit normal vector $N$ is 
$
z=CN-cw,
$
where $c\in\R$ is the oriented distance from $\mathcal{S}$ to the origin in $\R^{n+1}$. 

If $\mathcal{S}_1$ and $\mathcal{S}_2$ are hyperspheres or affine hyperplanes with corresponding unit space-like vectors $z_1$ and $z_2$, respectively. and if $N^1_x$ and $N^2_x$ are the
unit normal vectors of $\mathcal{S}_1$ and $\mathcal{S}_2$, respectively, at $x\in
\mathcal{S}_1\cap \mathcal{S}_2$, then 
$
\<N^1_x,N^2_x\>=\< z_1,z_2\>. 
$  
In particular, $\mathcal{S}_1$ and $\mathcal{S}_2$ intersect orthogonally if and only if  
$\<z_1,z_2\>=0$.

More generally, any sphere or affine subspace $\mathcal{S}$ with codimension $k$ of $\mathbb{R}^{n+1}$ is given by 
$\Psi(\mathcal{S})=\Ee^{n+1}\cap V^\perp$ for some space-like subspace 
$V\subset \Le^{n+3}$ of dimension $k$, affine subspaces being characterized by the fact 
that $w\in V^\perp$.  We refer to $V$ in the sequel as the space-like subspace of $\Le^{n+3}$ correspondent to $\mathcal{S}$.

 If $f\colon M^m\to\Ve^{n+2}$ is an immersion of a
differentiable manifold and  $\mu\in C^\infty(M)$ is any positive smooth function, then the
map $h\colon M^m\to\Ve^{n+2}$ given by $ h=\mu f$ is  also an
immersion, and the induced metrics $\<\,,\,\>_f$ and $\<\,,\,\>_h$
are related by 
$
\<\,,\,\>_h=\mu^2\<\,,\,\>_f.
$
As a consequence, any conformal immersion $f\colon M^m\to\R^{n+1}$ 
with conformal factor $\va\in C^\infty(M)$  gives rise to an 
isometric immersion ${\mathcal I}(f)\colon M^m\to\Ve_+^{n+2}$ given by
$
{\mathcal I}(f)=\frac{1}{\va}\Psi\circ f.
$
Conversely, any isometric immersion 
$F\colon M^m\to\Ve_+^{n+2}\smallsetminus\R w$, where  $\R w=\{tw:t<0\}$,
gives rise to a conformal immersion ${\mathcal C}(F)\colon M^m\to\R^{n+1}$  
given by
$
\Psi\circ {\mathcal C}(F) = \Pi\circ F,
$
whose conformal factor is $1/\<F,w\>$.  Here $\Pi\colon\Ve_+^{n+2}\smallsetminus\R w \to\Ee^{n+1}$ denotes the projection 
onto $\Ee^{n+1}$ given by
$ \Pi(u)=u/\<u,w\>$.

In particular, if  $T\in\Oes_1(n+3)$, then
${\mathcal C}(T\circ\Psi)$ is a conformal transformation of $\mathbb{R}^{n+1}$,
and every conformal transformation of $\mathbb{R}^{n+1}$ is given in this way.
For instance, if $R\in\Oes_1(n+3)$ is the reflection 
$
R(u)=u-2\<u,z\>z
$ 
with respect to the hyperplane in $\Le^{n+3}$ orthogonal to a unit 
space-like vector $z$, with $\<z,w\>\neq 0$, then
$
{\mathcal C}(R\circ\Psi)=I
$
is the inversion with respect to the hypersphere such that $\Psi(\mathcal{S})=\Ee^{n+1}\cap\{z\}^\perp$. If  $G\in\Oes_1(n+3)$ satisfies $G(w)=\lambda w$ for some
$\lambda\in(0,+\infty)$, then
$ {\mathcal C}(G\circ\Psi)=L$ 
is a similarity of ratio $\lambda$.  In particular, isometries 
of $\R^{n+1}$ correspond  to the elements of  $\Oes_1(n+3)$ that fix $w$.

\section{The conformal model of $\Hy^k\times \Sf^{n-k+1}$} Let $\R^{k-1}$ be a subspace of dimension $k-1$ of $\R^{n+1}$, which we assume to be 
$$\R^{k-1}=\{(x_1, \ldots, x_{n+1})\in \R^{n+1}\,:\,x_{k}=\cdots=x_{n+1}=0\}.$$
Given $x\in \R^{n+1}$, we write $x=x^T+x^\perp$ according to the orthogonal decomposition $\R^{n+1}= \R^{k-1}\oplus \R^{n-k+2}$. Now let $v, w\in \Le^{n+3}$ be vectors in $\Ve^{n+2}\subset \Le^{n+3}$ with $\<v,w\>=1$, let
$C\colon\R^{n+1}\to(\spa\{v,w\})^\perp\subset\Le^{n+3}$ be a linear isometry, and set
$V_1=C(\R^{k-1})$ and $V_2=C(\R^{n-k+2})$. Let $\Le^{n+3}=W_1\oplus W_2$ be the orthogonal decomposition with $W_1=V_1\oplus \spa\{v,w\}$ and $W_2=V_2$. If $\Psi\colon \R^{n+1}\to \Ee^{n+1}\subset \Le^{n+3}$ is the isometric embedding given by (\ref{eq:Psi}), then the map $\Theta\colon \R^{n+1}\setminus \R^{k-1}\to \Hy^k\times \Sf^{n-k+1}\subset W_1\oplus W_2=\Le^{k+1}\times \R^{n-k+2}=\Le^{n+3}$ defined by
 \begin{eqnarray}\label{eq:Theta}\Theta(x)&=&\|x^{\perp}\|^{-1}\Psi(x)\nonumber\\
 &=&\|x^{\perp}\|^{-1}\left( v-\frac{1}{2}\|x\|^2w+Cx^T+Cx^\perp\right)
 \end{eqnarray} 
is a conformal diffeomorphism with induced metric $g=\varphi^{2}g_0$ whose conformal factor with respect to the Euclidean metric $g_0$ is given by  $\va(x)=\|x^\perp\|^{-1}=(\sum_{i=k}^{n+1} x_i^2)^{-1/2}$, that is, $\varphi(x)$ is  the inverse of the distance from $x$ to $\R^{k-1}$. We refer to $(\R^{n+1}\setminus \R^{k-1}, g)$ as the \emph{conformal model} of $\Hy^k\times \Sf^{n-k+1}$. The above notations will be used throughout the paper.

\subsection{The isometries of $(\R^{n+1}\setminus \R^{k-1}, g)$}

The next result describes the isometries of $(\R^{n+1}\setminus \R^{k-1}, g)$. 

\begin{theorem} \label{isometries} The following assertions on the map $F\colon \R^{n+1}\setminus \R^{k-1}\to \R^{n+1}\setminus \R^{k-1}$ are equivalent for any $1\leq k\leq n+1$:
\begin{itemize}
\item[(i)] $F$ is an isometry with respect to $g$;
\item[(ii)] $\Theta\circ F= T\circ \Theta$, where $T\in O_1(n+3)$ leaves $W_1$ and $W_2$ invariant;
\item[(iii)]  $F=\mathcal{C}(T\circ \Psi)$, where $T\in O_1(n+3)$ 
leaves $W_1$ and $W_2$ invariant;
\item[(iv)] $F$  is a conformal diffeomorphism with respect to the Euclidean metric $g_0$;
\item[(v)] $F$  is a conformal diffeomorphism with respect to $g$; 
\item[(vi)]  $F$ is given by $F=I\circ L$, where $L$ is a composition $L=G\circ H\circ A$ of an orthogonal transformation $A\in O(n+1)$ that leaves $\R^{k-1}$ invariant, a homothety with respect to the origin and a translation by a vector  $v\in \R^{k-1}$, and $I$ is an inversion with respect to a hypersphere centered at a  point of $\R^{k-1}$.
\end{itemize}
\end{theorem}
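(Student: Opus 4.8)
The plan is to prove the cycle of implications $(i)\Rightarrow(v)\Leftrightarrow(iv)\Rightarrow(iii)\Rightarrow(ii)\Rightarrow(i)$ and, separately, the equivalence $(iv)\Leftrightarrow(vi)$. The implication $(i)\Rightarrow(v)$ is immediate, since an isometry is in particular a conformal diffeomorphism. For $(iv)\Leftrightarrow(v)$ I would use that $g=\va^2g_0$, so that the two metrics are pointwise conformal on $\R^{n+1}\setminus\R^{k-1}$: writing $F^*g=(\va\circ F)^2\,F^*g_0$, one sees that $F^*g_0$ is a positive multiple of $g_0$ if and only if $F^*g$ is a positive multiple of $g$, whence $F$ is conformal with respect to one metric exactly when it is with respect to the other.

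For the core implication $(iv)\Rightarrow(iii)$ the key tool is Liouville's theorem: in dimension $n+1\geq 3$, a conformal diffeomorphism between open subsets of $\R^{n+1}$ is the restriction of a global conformal transformation of $\R^{n+1}\cup\{\infty\}$. Thus $F$ extends to such a transformation $\bar F$, which by the description recalled in Section~$2$ equals $\mathcal{C}(T\circ\Psi)$ for some $T\in O_1(n+3)$. Since $F$ maps $\R^{n+1}\setminus\R^{k-1}$ onto itself, $\bar F$ preserves the complementary $(k-1)$-sphere $\R^{k-1}\cup\{\infty\}$, whose correspondent space-like subspace is $V_2=W_2$, that is, $\Psi(\R^{k-1})=\Ee^{n+1}\cap V_2^\perp$. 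Because $\Psi\circ F=\Pi\circ T\circ\Psi$ and $\Pi$ only rescales, the action of $\bar F=\mathcal{C}(T\circ\Psi)$ on spheres corresponds to $V\mapsto TV$ on their correspondent space-like subspaces; applying this to $\R^{k-1}$ gives $T(W_2)=W_2$, and hence $T(W_1)=T(W_2^\perp)=W_1$.

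Granting $(iii)$, I would obtain $(ii)$ by a direct normalization argument. From $\Psi\circ F=\<T\Psi,w\>^{-1}\,T\Psi$ it follows that $\Theta\circ F=\|F(x)^\perp\|^{-1}\<T\Psi(x),w\>^{-1}\,T\Psi(x)$ and $T\circ\Theta=\|x^\perp\|^{-1}\,T\Psi(x)$ are both positive multiples of $T\Psi(x)$. Since $T$ preserves $W_1$ and $W_2$ together with the inner product, $T\Psi(x)$ has time-like $W_1$-component and the ray $\R_{+}T\Psi(x)$ meets $\Hy^k\times\Sf^{n-k+1}$ in exactly one point; as both $\Theta\circ F$ and $T\circ\Theta$ lie on $\Hy^k\times\Sf^{n-k+1}$, they coincide. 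Then $(ii)\Rightarrow(i)$ is formal: $\Theta$ is by construction an isometry from $(\R^{n+1}\setminus\R^{k-1},g)$ onto $\Hy^k\times\Sf^{n-k+1}$, and the restriction of $T$ splits as $T_1\oplus T_2$ with $T_1\in O_1(k+1)$ preserving $\Hy^k$ and $T_2\in O(n-k+2)$ preserving $\Sf^{n-k+1}$, so it is a product isometry; therefore $F=\Theta^{-1}\circ T\circ\Theta$ is an isometry of $g$. Here one also checks that $T_1$ is orthochronous, which is forced because $T\circ\Theta=\Theta\circ F$ takes values in the upper sheet.

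Finally, for $(iv)\Leftrightarrow(vi)$: the implication $(vi)\Rightarrow(iv)$ is a verification that each building block is conformal and preserves $\R^{n+1}\setminus\R^{k-1}$ --- the orthogonal map $A$, the homothety $H$ with respect to the origin and the translation $G$ by a vector of $\R^{k-1}$ all leave $\R^{k-1}$ invariant, while an inversion $I$ centered at a point of $\R^{k-1}$ preserves $\R^{k-1}\cup\{\infty\}$. For $(iv)\Rightarrow(vi)$ I would analyze the Möbius extension $\bar F$ according to the image of $\infty$: if $\bar F(\infty)=\infty$ then $\bar F$ is a Euclidean similarity preserving $\R^{k-1}$, which factors as $L=G\circ H\circ A$ (so $(vi)$ holds with $I=\mathrm{id}$); if $\bar F(\infty)=p\in\R^{k-1}$, precomposing $\bar F$ with the inversion $I_p$ centered at $p$ produces a transformation fixing $\infty$ and preserving $\R^{k-1}$, reducing to the previous case and giving $F=I_p\circ L$. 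I expect the main obstacle to be $(iv)\Rightarrow(iii)$: making precise both the appeal to Liouville's theorem (hence the standing assumption $n+1\geq 3$) and the dictionary translating ``$\bar F$ preserves the sphere $\R^{k-1}\cup\{\infty\}$'' into ``$T$ leaves $W_1$ and $W_2$ invariant'', together with the orthochronous bookkeeping needed so that $T$ genuinely preserves the factor $\Hy^k$.
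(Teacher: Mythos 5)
Your proposal is correct, and it reaches the theorem by a genuinely different route than the paper. The two substantive steps differ as follows. First, to extract the invariance of $W_1$ and $W_2$ from (iv), you apply the sphere--subspace dictionary to the \emph{complementary} sphere $\R^{k-1}\cup\{\infty\}$: since the M\"obius extension $\bar F$ must preserve it, and its correspondent space-like subspace is $W_2$, you get $T(W_2)=W_2$ directly. The paper instead runs a kernel argument on the identity $\Psi\circ F=\Pi\circ T\circ\Psi$, showing $\ker(\pi_{W_2}\circ T)\subset\ker\pi_{W_2}=W_1$ and concluding invariance by dimension count. Both arguments rest on the same external input --- that a conformal diffeomorphism of $\R^{n+1}\setminus\R^{k-1}$ onto itself is of the form $\mathcal{C}(T\circ\Psi)$ --- which you correctly identify as Liouville's theorem with its restriction to dimension $n+1\geq 3$; the paper invokes its Section 2 claim silently and never flags that $F$ is only defined on an open subset (for $n+1=2$, i.e.\ $\Hy^1\times\Sf^1$, the statement survives because the conformal automorphisms of $\C\setminus\{0\}$ are still M\"obius, but this needs a separate one-line argument in either approach). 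Second, for (vi), the paper argues $(iii)\Rightarrow(vi)$ inside the Lorentzian formalism, explicitly factoring $T=R\circ G$ with $R$ a reflection and $G$ fixing the ray of $w$, and closes its cycle with a direct computation that inversions centered on $\R^{k-1}$ preserve $g$; you instead prove $(iv)\Leftrightarrow(vi)$ by the classical M\"obius argument tracking the image of $\infty$, which is more elementary and avoids the light-cone computation, at the cost of not producing the explicit Lorentzian factorization of $T$. Your cycle $(i)\Rightarrow(v)\Leftrightarrow(iv)\Rightarrow(iii)\Rightarrow(ii)\Rightarrow(i)$ also forces you to prove $(iii)\Rightarrow(ii)$ (the paper only needs the easier converse); your normalization argument for it is sound, granting that statement (iii) implicitly requires $T\circ\Psi$ to take values in $\Ve^{n+2}_+$ for $\mathcal{C}(T\circ\Psi)$ to be defined, which is what makes both points positive multiples of $T\Psi(x)$ and renders your orthochronicity remark non-circular.

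Two small corrections. In $(iv)\Rightarrow(vi)$, when $\bar F(\infty)=p\in\R^{k-1}$, you should \emph{post}compose: $I_p\circ\bar F$ fixes $\infty$ (since $I_p(p)=\infty$), giving $\bar F=I_p\circ L$ in exactly the form required; precomposing with $I_p$ sends $\infty$ to $\bar F(p)$, which need not be $\infty$ --- precomposition would require the inversion centered at $\bar F^{-1}(\infty)$ and would yield $F=L\circ I$, needing an extra conjugation to reach the form $I\circ L$. Also note that when $\bar F$ fixes $\infty$ you take $I=\mathrm{id}$, which is not literally an inversion; this reading of (vi) is forced (a similarity cannot equal a genuine inversion composed with a similarity), and the paper's own proof makes the same implicit convention in its first case.
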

\proof $(i) \Longrightarrow (ii)$: Let $F$ be an isometry of $(\R^{n+1}\setminus \R^{k-1},g)$. Since $\Theta\colon (\R^{n+1}\setminus \R^{k-1}, g)\to \Hy^k\times \Sf^{n-k+1}$ is an isometry,  then $\Theta\circ F= T\circ \Theta$ for some element $T$ of the isometry group 
$\mbox{Iso}\,(\Hy^k\times \Sf^{n-k+1})= \mbox{Iso}\,(\Hy^k)\times \mbox{Iso}\,(\Sf^{n-k+1})$ of $\Hy^k\times \Sf^{n-k+1}\subset W_1\oplus W_2=\Le^{n+3}$, which is an orthogonal transformation $T\in O_1(n+3)$ that  leaves $W_1$ and $W_2$ invariant.\vspace{1ex}\\
$(ii)\Longrightarrow (iii)$:   Since $\Theta(x)=\|x^{\perp}\|^{-1}\Psi(x)$, then $\Pi\circ \Theta=\Psi$, and hence
$$\Pi\circ T\circ \Psi= \Pi\circ T\circ \Theta=\Pi\circ \Theta\circ F=\Psi\circ F.$$

 That $(iii)$ implies $(iv)$ is clear, as is the equivalence between $(iv)$ and $(v)$. \vspace{1ex}\\
 $(iv)\Longrightarrow (iii)$: The assumption that $F(\R^{n+1}\setminus \R^{k-1})\subset \R^{n+1}\setminus \R^{k-1}$ means that $(F(x))^\perp\neq 0$ whenever $x^\perp\neq 0$. Since
 $\Psi(x)=v-\frac{1}{2}\|x\|^2w+Cx^T+Cx^\perp$
 and  $\Psi\circ F=\Pi\circ T\circ \Psi$, this implies that $\pi_{W_2}(Ty)\neq 0$ whenever $\pi_{W_2}(y)\neq 0$, where $\pi_{W_2}\colon \Le^{n+3}\to W_2=V_2$ is the orthogonal projection.  In other words, $\ker (\pi_{W_2}\circ T)\subset \ker \pi_{W_2}=W_1$, or equivalently, $y\in W_1$ whenever $Ty\in W_1$. Thus $W_1$ is invariant under $T^{-1}$, and hence both $W_2$ and $W_1$ are invariant under $T$. \vspace{1ex}\\
 $(iii)\Longrightarrow (vi)$:   Define $(\bar{v},\bar{w},\bar{C})$ by 
$\bar{v}=T(v),\;\;\bar{w}=T(w)\;\;\mbox{and}\;\;\bar{C}=T\circ C.
$
If $\bar{w}=\lambda w$ for some $\lambda\in(0,+\infty)$, then ${\mathcal C}(T\circ\Psi)=L$
is a similarity of ratio $\lambda$. Moreover, since $T$ leaves $W_1$ and $W_2$ invariant, from $\Psi\circ L=\Pi\circ T\circ \Psi$ it follows that $L$ leaves $\R^{k-1}$ and 
$\R^{n-k+2}$ invariant. Writing $L=G\circ H\circ A$, where  $A\in O(n+1)$ is  an orthogonal transformation, $H$ is a homothety and $G$ is a translation by a vector  $y\in \R^{n+1}$, it follows that $y\in \R^{k-1}$ and that $A$  leaves  $\R^{k-1}$  invariant. 

Otherwise, consider the reflection 
$
R(u)=u-2\<u,z\>z
$
determined by the unit space-like vector
${\displaystyle 
z=\frac{1}{\<\bar{w},w\>}\bar{w}+\frac{1}{2}w}$, 
and let $G\in\Oes_1(n+3)$ be given  by
$$
G(w)=R(\bar{w})=-\frac{1}{2}\<\bar{w},w\>w,\;\;G(v)=R(\bar{v})
\;\;\mbox{and}\;\;G\circ C=R\circ\bar{C}.
$$
 Then $R\circ G$ takes $w$ to $\bar{w}$,
$v$ to $\bar{v}$ and $R\circ G\circ C= \bar{C}$, hence $R\circ G=T$.
The map 
${\mathcal C}(R\circ\Psi)=I$ is an inversion with respect to the hypersphere 
of unit radius such that $\Psi(\mathcal{S})=\Ee^{n+1}\cap\{z\}^\perp$, whereas ${\mathcal C}(G\circ\Psi)=L$  
is a  similarity  of ratio $\lambda=-(1/2)\<\bar{w},w\>$. Moreover, since $T$ leaves $W_1$ invariant, then $\bar{w}$, and hence $z$, belongs to $W_1$. Therefore, the hypersphere $\mathcal{S}$ such that $\Psi(\mathcal{S})=\Ee^{n+1}\cap\{z\}^\perp$ is centered at a  point of $\R^{k-1}$. Finally, from
$$\Pi\circ T\circ\Psi=\Pi\circ R\circ G\circ\Psi=\Pi\circ R\circ\Psi\circ L=\Psi \circ I\circ L,$$
it follows that $F=I\circ L$.\vspace{1ex}\\
$(vi)\Longrightarrow (i)$: Let $x_0\in \R^{k-1}$. The inversion with respect to $\Sf^{n}(x_0;r)$ is given by
$$ I(x)=x_0+r^2\frac{x-x_0}{\|x-x_0\|^2},$$
which is a conformal map with conformal factor ${\displaystyle \rho(x)= \frac{r^2}{\|x-x_0\|^2}}$. In particular,
$$ (I(x))^\perp=r^2\frac{x^\perp}{\|x-x_0\|^2}.$$
Therefore
\begin{eqnarray*} g(I(x))(I_*(x)v, I_*(x)w)
&=&\frac{1}{\|(I(x))^\perp\|^2}g_0(I(x))(I_*(x)v, I_*(x)w)\\
&=&\frac{\|x-x_0\|^4}{r^4\|x^\perp\|^2}g_0(I(x))(I_*(x)v, I_*(x)w)\\
&=&\frac{\|x-x_0\|^4}{r^4\|x^\perp\|^2}\frac{r^4}{\|x-x_0\|^4}g_0(x)(v, w)\\
&=&g(x)(v, w).
 \end{eqnarray*}
 The verification that an orthogonal transformation $A\in O(n+1)$ that leaves $\R^{k-1}$ invariant, a homothety  and a translation by a vector $y\in \R^{k-1}$ are also isometries with respect to $g$ is straightforward. \vspace{1ex}\qed
 
  For instance,  an  inversion with respect to a hypersphere of $\R^{n+1}$ centered on $\R^{k-1}$  acts on each $k$-dimensional half-space $\R^k_+$ having $\R^{k-1}$ as boundary as an hyperbolic isometry of $\R^k_+$ regarded as the half-space model of $\Hy^k$ with $\R^{k-1}$ as asymptotic boundary. Each such $\Hy^k$ is a totally geodesic submanifold of $(\R^{n+1}\setminus \R^{k-1}, g)$, which corresponds under  
$\Theta$ to a slice $\Hy^k\times \{x\}$ of $\Hy^k\times \Sf^{n-k+1}$ (see the next subsection for a description of all totally geodesic submanifolds of  $(\R^{n+1}\setminus \R^{k-1}, g)$). \vspace{1ex}

Theorem \ref{isometries} has the following consequence of independent interest.

\begin{corollary}\label{cor:iso} Any conformal diffeomorphism of $\Hy^k\times \Sf^{n-k+1}$ onto itself, $1\leq k\leq n+1$, is an isometry.
\end{corollary}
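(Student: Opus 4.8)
The plan is to reduce the statement to Theorem~\ref{isometries} by transporting any conformal self-map of $\Hy^k\times\Sf^{n-k+1}$ back to the conformal model $(\R^{n+1}\setminus\R^{k-1},g)$ through the isometry $\Theta$. Since $\Theta$ identifies $(\R^{n+1}\setminus\R^{k-1},g)$ isometrically with $\Hy^k\times\Sf^{n-k+1}$, a conformal diffeomorphism of the product should correspond precisely to a conformal diffeomorphism of $(\R^{n+1}\setminus\R^{k-1},g)$, and the equivalence between conditions $(i)$ and $(v)$ of Theorem~\ref{isometries} then forces the latter to be an isometry.

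Concretely, given a conformal diffeomorphism $\Xi\colon\Hy^k\times\Sf^{n-k+1}\to\Hy^k\times\Sf^{n-k+1}$, I would first set $F=\Theta^{-1}\circ\Xi\circ\Theta$. As a composition of diffeomorphisms, $F$ is a diffeomorphism of $\R^{n+1}\setminus\R^{k-1}$ onto itself, where one uses that $\Theta$ is a diffeomorphism onto $\Hy^k\times\Sf^{n-k+1}$ and that $\Xi$ is onto. Next I would observe that $F$ is conformal with respect to $g$: indeed $\Theta$ and $\Theta^{-1}$ are isometries between $(\R^{n+1}\setminus\R^{k-1},g)$ and $\Hy^k\times\Sf^{n-k+1}$, in particular conformal, so $F$ is a composition of a conformal map with two isometries and hence conformal. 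This is exactly condition $(v)$ of Theorem~\ref{isometries}. Invoking the implication $(v)\Rightarrow(i)$, $F$ is a $g$-isometry, and therefore $\Xi=\Theta\circ F\circ\Theta^{-1}$ is an isometry of $\Hy^k\times\Sf^{n-k+1}$, as claimed.

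There is essentially no obstacle at the level of the corollary: all the analytic content has already been absorbed into Theorem~\ref{isometries}, whose genuinely nontrivial step is the implication $(iv)\Rightarrow(iii)$, where the conformality with respect to $g_0$ of a self-map of $\R^{n+1}\setminus\R^{k-1}$ is used to show that the corresponding $T\in O_1(n+3)$ must preserve each of $W_1$ and $W_2$. The only points requiring care in the corollary are bookkeeping ones, namely that conjugation by $\Theta$ sends conformal maps of the product to conformal maps of the model (immediate since $\Theta$ is a $g$-isometry) and that $F$ is a genuine self-diffeomorphism of $\R^{n+1}\setminus\R^{k-1}$; both are formal.
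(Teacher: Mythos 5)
Your proof is correct and takes essentially the same route the paper intends: the paper states Corollary~\ref{cor:iso} without a separate proof, as an immediate consequence of Theorem~\ref{isometries}, precisely by viewing a conformal self-map of $\Hy^k\times \Sf^{n-k+1}$ through the isometry $\Theta$ as a conformal self-map of $(\R^{n+1}\setminus \R^{k-1},g)$ and applying the equivalence of conditions $(i)$ and $(v)$. Your write-up simply makes this implicit conjugation argument explicit, so there is nothing to correct.
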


\begin{remark}\emph{The assertion of Corollary \ref{cor:iso} in the particular case of  the product spaces $\mathbb{S}^2\times \mathbb{R}$ and $\mathbb{H}^2\times \mathbb{R}$ was proved in \cite{st}, while for $k=n+1$ it reduces to a well-known property of the hyperbolic space $\Hy^n$.}
\end{remark}

\subsection{The totally geodesic submanifolds of  $(\R^{n+1}\setminus \R^{k-1}, g)$ }

In this subsection we describe the totally geodesic submanifolds of  $(\R^{n+1}\setminus\R^{k-1},g)$ and their correspondent space-like subspaces of $\Le^{n+3}$.

\begin{proposition} \label{prop:totgeo}
The totally geodesic submanifolds of $(\R^{n+1}\setminus\R^{k-1},g)$ are
\begin{itemize}
    \item [(i)] Affine subspaces $x_0+\R^{l-1}\oplus\R^{m-l+1}$ of $\R^{n+1}=\R^{k-1}\oplus\R^{n-k+2}$, with $l\leq k$ and $x_0\in \R^{k-1}$, where $\R^{l-1}\subset \R^{k-1}$ and $\R^{m-l+1}\subset\R^{n-k+2}$ are linear subspaces.
    \item [(ii)] Hyperspheres of affine subspaces as above centered at points of $\R^{l-1}$. 
\end{itemize}
\end{proposition}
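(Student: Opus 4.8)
The plan is to transport the problem to the product $\Hy^k\times\Sf^{n-k+1}$ via the isometry $\Theta$ and then read everything back in the conformal model through $\Psi$. Since $\Theta\colon(\R^{n+1}\setminus\R^{k-1},g)\to\Hy^k\times\Sf^{n-k+1}$ is an isometry, a subset $M\subset\R^{n+1}\setminus\R^{k-1}$ is totally geodesic if and only if $\Theta(M)$ is totally geodesic in the product. I would first invoke the structural fact that, because the factors $\Hy^k$ and $\Sf^{n-k+1}$ have sectional curvatures of opposite signs, every totally geodesic submanifold of $\Hy^k\times\Sf^{n-k+1}$ of dimension at least two is a Riemannian product $L_1\times L_2$ of totally geodesic submanifolds $L_1\subset\Hy^k$ and $L_2\subset\Sf^{n-k+1}$; in the model $W_1\oplus W_2=\Le^{n+3}$ these are exactly the intersections $\Hy^k\cap U_1$ and $\Sf^{n-k+1}\cap U_2$ with a Lorentzian subspace $U_1\subset W_1$ and a linear subspace $U_2\subset W_2$. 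This products-of-factors rigidity is the one genuinely non-computational ingredient; it can be obtained by the standard argument showing that a curvature-invariant subspace of $E_1\oplus E_2$ splits when $\kappa_1<0<\kappa_2$.

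Next I would translate the membership condition. Writing $\Theta(x)=\va(x)\Psi(x)$ and using $W_1\perp W_2$, the point $\Theta(x)$ lies in $(\Hy^k\cap U_1)\times(\Sf^{n-k+1}\cap U_2)$ if and only if $\Psi(x)\perp V$, where $V:=U_1^{\perp_{W_1}}\oplus U_2^{\perp_{W_2}}$. Since $U_1$ is Lorentzian and $W_2$ is spacelike, both summands of $V$ are spacelike, so $V$ is a spacelike subspace of $\Le^{n+3}$; conversely every spacelike $V$ that respects the splitting $W_1\oplus W_2$ arises this way, the correspondence $U_1\mapsto U_1^{\perp_{W_1}}$ being a bijection between Lorentzian and spacelike subspaces of $W_1$. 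By the correspondence recalled in Section $2$, the set $\{x:\Psi(x)\perp V\}$ is exactly the sphere or affine subspace $\mathcal S\subset\R^{n+1}$ with correspondent space-like subspace $V$, so it remains to identify these $\mathcal S$ explicitly.

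The heart of the computation is the identity $\<\Psi(x),\alpha v+\beta w+C\zeta\>=\beta-\tfrac12\alpha\|x\|^2+\<x^T,\zeta\>$ for $\zeta\in\R^{k-1}$, together with $\<\Psi(x),C\eta\>=\<x^\perp,\eta\>$ for $\eta\in\R^{n-k+2}$. The vectors of the $W_2$-summand $C(S_2)=U_2^{\perp_{W_2}}$ thus impose the linear condition $x^\perp\in P:=S_2^{\perp}\subset\R^{n-k+2}$, while a vector $\alpha v+\beta w+C\zeta$ of $V_1':=U_1^{\perp_{W_1}}$ imposes $-\tfrac12\alpha\|x\|^2+\<x^T,\zeta\>+\beta=0$. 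I would then split into two cases according to whether $w\in U_1$, which is equivalent to $\alpha=\<\,\cdot\,,w\>$ vanishing on all of $V_1'$. When it vanishes, every such condition is affine in $x^T$ alone, so $\mathcal S=A\times P$ with $A\subset\R^{k-1}$ an affine subspace, giving the affine subspaces of type (i). When some $\xi_0\in V_1'$ has $\alpha_0\neq0$, completing the square rewrites its condition as $\|x-\alpha_0^{-1}\zeta_0\|^2=\<\xi_0,\xi_0\>/\alpha_0^2>0$, a sphere whose center $\alpha_0^{-1}\zeta_0$ lies in $\R^{k-1}$; intersecting with the remaining affine conditions on $x^T$ and with $x^\perp\in P$, and applying Pythagoras to the orthogonal projection of the center onto the affine subspace $A\times P$, yields a genuine hypersphere of an affine subspace of type (i), centered at a point of its $\R^{l-1}$-part, which is precisely type (ii).

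Finally I would run the construction in reverse to verify that every subspace and every sphere listed in (i) and (ii) does come from an admissible pair $(U_1,U_2)$, and record that, since points with $x^\perp=0$ are excluded from the domain, the totally geodesic submanifold in the model is $\mathcal S\setminus\R^{k-1}$ (an equatorial subsphere being deleted in case (ii)). The main obstacle I anticipate is the products-of-factors rigidity: it is exactly what rules out the ``tilted'' geodesics — for instance the logarithmic spirals in the flat cylinder that is the conformal model of $\Hy^1\times\Sf^1$ — and is the reason the statement is clean only in dimension at least two, which is the range relevant to the umbilical submanifolds under study. The remaining effort is the bookkeeping needed to see the product splitting of the directions and, above all, to pin the center of the sphere in case (ii) onto $\R^{k-1}$.
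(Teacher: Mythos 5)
Your proposal is correct, but it runs in the opposite logical direction to the paper's proof, so the comparison is worth recording. The paper never leaves the conformal model: it applies the conformal relation (\ref{eq:rel}) between second fundamental forms to the pair of metrics $g_0$ and $g=\varphi^2g_0$ on $\R^{n+1}\setminus\R^{k-1}$, so that total geodesy with respect to $g$ becomes a purely Euclidean dichotomy --- either $f$ is totally geodesic for $g_0$ and $(f(x)^\perp)_{N_fM}=0$, or $f$ is $g_0$-umbilical with mean curvature $H=-(f(x)^\perp)_{N_fM}/\|f(x)^\perp\|^2$ --- and then solves these two Euclidean conditions, pinning the centers of the spheres onto $\R^{k-1}$ by an elementary computation; no structure theory of the product and no light-cone bookkeeping enter at all. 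You instead transport the problem to $\Hy^k\times\Sf^{n-k+1}$ via $\Theta$, invoke the splitting of totally geodesic submanifolds of dimension at least two of the product (rigidity of curvature-invariant subspaces when the curvatures are $-1$ and $+1$), and pull the resulting pairs $(U_1,U_2)$ back through the light-cone correspondence; in effect you prove Corollary \ref{cor:totgeosub} first and deduce Proposition \ref{prop:totgeo} from it, whereas the paper deduces that Corollary from the Proposition. Each route has a cost and a payoff. The paper's argument is self-contained and reuses (\ref{eq:rel}), which is needed elsewhere in the paper anyway; yours hinges on the splitting fact, which you only sketch, but the sketch you indicate (eigenspace decomposition of $Z\mapsto R(Z,Y)Y$ on a curvature-invariant subspace, whose eigenvalues $-\|Y_1\|^2$, $0$, $\|Y_2\|^2$ cannot coalesce when $Y_1\neq0\neq Y_2$) is the correct standard argument and does work in dimension $\geq 2$, so this is a citable ingredient rather than a gap. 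In exchange, your route delivers the space-like-subspace description of Corollary \ref{cor:totgeosub} for free, and it makes explicit a caveat the paper leaves tacit: in dimension one the statement fails (tilted geodesics, e.g.\ logarithmic spirals in the model of $\Hy^1\times\Sf^1$), and the paper's own proof also implicitly assumes $m\geq 2$ at the step where $g_0$-umbilical submanifolds are identified with hyperspheres and their affine sections.
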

\proof It is well-known that if $f\colon M^n\to\tilde M^m$ is an immersion and 
 ${g}_0, {g}_1$ are conformal metrics on $\tilde M^m$, then the second fundamental forms of  
$f_j=f\colon (M^n,f^*g_j)\to (\tilde M^m, {g}_j)$, $0\leq j\leq 1$, are related by
\begin{equation}\label{eq:rel}
\alpha^{f_1}(x)(X,Y)=\alpha^{f_0}(x)(X,Y)-\frac{1}{\varphi(f(x))}f^*g_0(X,Y)(\mbox{grad}_0\varphi(f(x)))_{N_fM(x)}
\end{equation}
for all $x\in M^n$ and $X,Y\in T_xM$, where $\varphi\in C^{\infty}({M})$ is 
the  conformal factor of $g_1$ with respect to $g_0$ and $\mbox{grad}_0 \varphi$ 
denotes the gradient of $\varphi$ with respect to $g_0$. Therefore, $f_1$ is totally geodesic if and only if either $f_0$ is totally geodesic and   
$(\mbox{grad}_0\varphi(f(x)))_{N_fM(x)}=0$ for all $x\in M$, 
or $f_0$ is umbilical with mean curvature vector
$$H(x)= \frac{\mbox{grad}_0\varphi(f(x))_{N_fM(x)}}{\varphi(f(x))}$$ for all $x\in M$.

 For $\tilde M^m=\R^{n+1}\setminus \R^{k-1}$,  $g_0$ the standard Euclidean metric and $g_1:=g=\varphi^2g_0$, with $\varphi(x)=\|x^\perp\|^{-1}=g_0(x^\perp, x^\perp)^{-1/2}$, 
we have
 $\mbox{grad}_0\varphi(f(x))=\frac{f(x)^\perp}{\|f(x)^\perp\|^2}$ for all $x\in \R^{n+1}\setminus \R^{k-1}$. Hence, if $f\colon M^n\to \R^{n+1}\setminus \R^{k-1}$ is an immersion, then 
 $f_1=f\colon (M^m,f^*g)\to (\R^{n+1}\setminus \R^{k-1}, {g})$ is totally geodesic 
 if and only if either  $f_0=f\colon (M^m,f^*g_0)\to (\R^{n+1}\setminus \R^{k-1}, {g}_0)$ is totally geodesic and  $(f(x)^\perp)_{N_fM(x)}=0$ for all $x\in M^m$, or $f_0$ is umbilical and its  mean curvature vector is given by
$H(x)= -\frac{(f(x)^\perp)_{N_fM}}{\|f(x)^\perp\|^2}$ for all $x\in M^m$.

In the former case, $f(M)$ is an open subset of an affine subspace $x_0+\R^{l-1}\oplus\R^{m-l+1}$ of $\R^{n+1}=\R^{k-1}\oplus\R^{n-k+2}$, with $l\leq k$  and $x_0\in \R^{k-1}$, where $\R^{l-1}\subset \R^{k-1}$ and $\R^{m-l+1}\subset\R^{n-k+2}$ are linear subspaces.

In the latter case, suppose first that $m=n$, in which case $f(M)$ is an open subset of a hypersphere centered at $x_0\in \R^{n+1}$, whose mean curvature vector is 
$$H(x)=\frac{1}{r}N(x),$$
where $r=\|f(x)-x_0\|$ is its radius and $N(x)=\frac{1}{r}(x_0-f(x))$ is its inward pointing unit normal vector field.
Thus, $H(x)= -\frac{(f(x)^\perp)_{N_fM}}{\|f(x)^\perp\|^2}$ if and only if
$$
\frac{1}{r}=-\frac{\<f(x)^\perp,N\>}{\|f(x)^\perp\|^2}=\frac{\<f(x)^\perp,f(x)-x_0\>}{r\|f(x)^\perp\|^2}.
$$
which is equivalent to
$$
 \<f(x)^\perp,x_0\>=0
$$
for all $x\in M^n$, that is $x_0^\perp=0$.
It follows that $f(M)$ is an open subset of a hypersphere centered at $x_0\in \R^{k-1}$. For arbitrary $m$, $f(M)$ is the intersection of such a hypersphere with an affine subspace $x_0+\R^{l-1}\oplus\R^{m-l+1}$ of $\R^{n+1}=\R^{k-1}\oplus\R^{n-k+2}$, with $l\leq k$ and $x_0\in \R^{k-1}$, where $\R^{l-1}\subset \R^{k-1}$ and $\R^{m-l+1}\subset\R^{n-k+2}$ are linear subspaces, and hence it is an open subset of a hypersphere of such an affine subspace centered at a point of $\R^{l-1}$. \vspace{1ex}\qed

For instance, hyperspheres of $\R^{n+1}$ centered at points of $\R^{k-1}$ and affine hyperplanes $x_0+ \R^{k-2}\oplus \R^{n-k+2}$ correspond to totally geodesic hypersurfaces $\Hy^{k-1}\times\Sf^{n-k+1}$.

\begin{corollary} \label{cor:totgeosub} The totally geodesic submanifolds of  $(\R^{n+1}\setminus\R^{k-1},g)$ are the (open subsets of) spheres and affine subspaces of $\R^{n+1}$ correspondent to the space-like subspaces $\tilde{V}\subset \Le^{n+3}$ of the form  
$\tilde{V}=\tilde{V}_1\oplus\tilde{V}_2$ with $\tilde{V}_1\subset W_1$ and $\tilde{V}_2\subset W_2$.
\end{corollary}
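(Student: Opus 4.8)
The plan is to read the conclusion off Proposition~\ref{prop:totgeo} by translating the two families listed there into the language of their correspondent space-like subspaces, using the dictionary recalled in Section~$2$. That dictionary gives a bijection between the (open subsets of) spheres and affine subspaces $\mathcal{S}$ of $\R^{n+1}$ and the space-like subspaces $\tilde V\subset\Le^{n+3}$, via $\Psi(\mathcal{S})=\Ee^{n+1}\cap\tilde V^\perp$, with $\mathcal{S}$ affine if and only if $w\in\tilde V^\perp$; moreover $\tilde V$ is spanned by the vectors $z=CN-cw$ of the cutting hyperplanes (unit normal $N$, oriented distance $c$ to the origin) and, in the spherical case, by $z_0=\frac{1}{r}\Psi(x_0)+\frac{r}{2}w$ for the ambient hypersphere of center $x_0$ and radius $r$. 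I will freely use the identities $\<\Psi(x),w\>=1$ and $\<\Psi(x),\Psi(y)\>=-\frac12\|x-y\|^2$, immediate from (\ref{eq:Psi}), together with $W_1=V_1\oplus\spa\{v,w\}$ and $W_2=V_2=C(\R^{n-k+2})$.

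For the forward inclusion I would treat the two families separately. For an affine subspace $x_0+\R^{l-1}\oplus\R^{m-l+1}$ with $x_0\in\R^{k-1}$ as in (i), I cut it out by hyperplanes whose unit normals are chosen orthonormally inside the normal space $(\R^{k-1}\ominus\R^{l-1})\oplus(\R^{n-k+2}\ominus\R^{m-l+1})$. A normal $N\in\R^{k-1}$ yields $z=CN-cw\in V_1\oplus\spa\{w\}\subset W_1$, while a normal $N\in\R^{n-k+2}$ yields oriented distance $c=\<x_0,N\>=0$, since $x_0\in\R^{k-1}$, and hence $z=CN\in V_2=W_2$. Thus $\tilde V=\tilde V_1\oplus\tilde V_2$ with $\tilde V_i\subset W_i$. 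For the spherical family (ii), the ambient hypersphere centered at $x_0\in\R^{k-1}$ contributes $z_0=\frac{1}{r}\Psi(x_0)+\frac{r}{2}w$, which lies in $W_1$ because $\Psi(x_0)\in W_1$ whenever $x_0\in\R^{k-1}$; adjoining the vectors of the containing affine subspace, already shown to split, again gives $\tilde V=\tilde V_1\oplus\tilde V_2$.

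For the converse I would start from an arbitrary $\tilde V=\tilde V_1\oplus\tilde V_2$ with $\tilde V_i\subset W_i$ and identify the correspondent $\mathcal{S}$ among the submanifolds of Proposition~\ref{prop:totgeo}. Since $w\in W_1$ and $\tilde V_2\subset W_2=W_1^\perp$, the orthogonal projection of $w$ onto $\tilde V$ lies in $\tilde V_1$, so $w\perp\tilde V$ if and only if $w\perp\tilde V_1$. If $w\perp\tilde V$, then $\mathcal{S}$ is affine: every $z\in\tilde V_2\subset C(\R^{n-k+2})$ is the vector of a hyperplane through the origin with normal in $\R^{n-k+2}$, hence containing $\R^{k-1}$, while every $z\in\tilde V_1$ with $\<z,w\>=0$ has the form $bw+C\xi$ with $\xi\in\R^{k-1}$, i.e. normal in $\R^{k-1}$; intersecting, $\mathcal{S}$ acquires product direction $\R^{l-1}\oplus\R^{m-l+1}$ and meets $\R^{k-1}$, so it is of type (i). If instead $w\not\perp\tilde V_1$, then $\mathcal{S}$ is spherical, and splitting off a sphere-type vector $z_0\in W_1$ and computing $\<\Psi(x),z_0\>=\frac{1}{2r}(r^2-\|x-x_0\|^2)$ identifies a hypersphere centered at $x_0\in\R^{k-1}$, whose intersection with the affine subspace correspondent to the remaining split vectors is a sphere centered on $\R^{l-1}$, i.e. of type (ii). The main obstacle is not any single deep step but the bookkeeping in this converse: one must check that decomposability of $\tilde V$ simultaneously forces the product form of the direction, places the center and offset data inside $\R^{k-1}$, and (in the spherical case) guarantees that the projection of the ambient center onto the containing affine subspace still lies on $\R^{l-1}$. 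Each of these follows from the implication $z\in W_1\Rightarrow$ center (resp. normal) in $\R^{k-1}$ and from the orthogonality $\R^{k-1}\perp\R^{n-k+2}$.
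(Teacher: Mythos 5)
Your proof is correct and follows essentially the same route as the paper's: both express the totally geodesic submanifolds of Proposition~\ref{prop:totgeo} as orthogonal intersections of hyperplanes with normals in $\R^{k-1}$ or in $\R^{n-k+2}$ and hyperspheres centered on $\R^{k-1}$, observe that the associated unit space-like vectors lie in $W_1$ or $W_2$ accordingly, and conclude that the correspondent subspace splits as $\tilde{V}_1\oplus\tilde{V}_2$. The only difference is that you carry out the converse direction explicitly (identifying an arbitrary splitting subspace with a submanifold of type (i) or (ii)), whereas the paper leaves that direction implicit in the same correspondence read backwards.
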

\proof Hyperplanes  $\R^{k-1}\oplus\R^{n-k+1}$ and affine hyperplanes  $x_0+\R^{k-2}\oplus\R^{n-k+2}$ of $\R^{n+1}=\R^{k-1}\oplus\R^{n-k+2}$ correspond, respectively,  to unit space-like vectors of the type $\eta=C(N)\in C(\R^{n-k+2})= W_2$ or $\zeta=-cw+C(N)\in \spa\{v,w\}\oplus C(\R^{k-1})=W_1$,  whereas hyperspheres centered at points $x_0\in \R^{k-1}$ correspond to unit space-like vectors of the type
 $$\frac{1}{r}\Psi(x_0)+\frac{r}{2}w=
 \frac{1}{r}v+\frac{1}{2}\left(r-\frac{\|x_0\|^2}{r}\right)w+Cx_0\in  \spa\{v,w\}\oplus C(\R^{k-1})=W_1.$$
  Therefore, totally geodesic submanifolds of  $(\R^{n+1}\setminus\R^{k-1},g)$, as orthogonal intersections of hyperplanes, affine hyperplanes and hyperspheres as above, correspond to space-like subspaces that are spanned by orthogonal space-like vectors that belong to either $W_1$ or $W_2$, and hence are space-like subspaces of the type $\tilde{V}=\tilde{V}_1\oplus\tilde{V}_2$ with $\tilde{V}_1\subset W_1$ and $\tilde{V}_2\subset W_2$. \qed
  
\subsection{The topology of an umbilical submanifold of  $\Hy^k\times \Sf^{n-k+1}$}
The topology, actually the diffeomorphism type, of a complete umbilical submanifold of  $\Hy^k\times \Sf^{n-k+1}$, is easily determined by means of the conformal model  $(\R^{n+1}\setminus\R^{k-1},g)$ of $\Hy^k\times \Sf^{n-k+1}$.

\begin{proposition}\label{prop:topology}  Any umbilical isometric immersion $f\colon M^m\to \Hy^k\times \Sf^{n-k+1}$, $1\leq k\leq n$, of a complete Riemannian manifold is an embedding, and $M^m$ is diffeomorphic 
 to either $\Sf^m$ or $\R^m$ if $k=1$, or else to  $\Sf^{m-d}\times \R^{d}$, $k-1-n+m\leq d\leq \min\{k-1,m\}$, if $k\geq 2$.
\end{proposition}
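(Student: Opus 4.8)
The plan is to work entirely in the conformal model $(\R^{n+1}\setminus\R^{k-1},g)$ of $\Hy^k\times\Sf^{n-k+1}$ and to exploit that umbilicity is a conformally invariant property. Given a complete umbilical isometric immersion $f\colon M^m\to\Hy^k\times\Sf^{n-k+1}$, I would first replace it by $\bar f:=\Theta^{-1}\circ f\colon M^m\to(\R^{n+1}\setminus\R^{k-1},g)$, which is again umbilical and, since $M^m$ is complete for $f^*g=\bar f^*g$, complete. Because $g=\varphi^2g_0$ with $g_0$ the Euclidean metric, formula (\ref{eq:rel}) shows that $\bar f$ is also umbilical as an immersion into $(\R^{n+1},g_0)$; by the classical classification of Euclidean umbilical submanifolds its image therefore lies in an embedded round sphere $\hat S\cong\Sf^m$ or in an affine subspace $\hat S\cong\R^m$ of $\R^{n+1}$.

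Next I would determine the image precisely using completeness. The key observation is that $\varphi(x)=\|x^\perp\|^{-1}\to\infty$ as $x\to\R^{k-1}$, so any $g$-unit-speed curve in $\hat S\setminus\R^{k-1}$ that eventually leaves every compact subset has infinite $g$-length; hence $(\hat S\setminus\R^{k-1},g)$ is a complete Riemannian manifold. Since $\bar f$ is a local isometry from the complete, connected manifold $(M^m,\bar f^*g)$ into it, its image is contained in a single connected component $\Sigma_0$ of $\hat S\setminus\R^{k-1}$, and the standard theorem that a local isometry out of a complete manifold is a covering map gives that $\bar f\colon M^m\to\Sigma_0$ is a Riemannian covering.

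It then remains to identify the diffeomorphism type of $\Sigma_0$ and to upgrade the covering to a diffeomorphism. Up to an isometry of $g$—for instance an inversion centered at a point of $\R^{k-1}$, which by Theorem~\ref{isometries} is an isometry and turns an affine subspace into a sphere whenever $\R^{k-1}\not\subset\hat S$—one may take $\hat S\cap\R^{k-1}$ to be a standardly embedded subsphere $\Sf^i\subset\Sf^m$; the remaining affine case $\R^{k-1}\subset\hat S$ corresponds to the totally geodesic product slices and is treated directly. Using the diffeomorphisms $\Sf^m\setminus\Sf^i\cong\Sf^{m-i-1}\times\R^{i+1}$ and $\R^m\setminus\R^{j}\cong\Sf^{m-j-1}\times\R^{j+1}$, each component $\Sigma_0$ is diffeomorphic to $\Sf^{m-d}\times\R^{d}$ with $d=i+1$ (resp. $d=j+1$). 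When $m-d\neq1$ the component is simply connected, so the covering is one-sheeted and $f$ is an embedding; the case $m-d=1$, where $\pi_1(\Sigma_0)=\mathbb{Z}$, must be handled separately.

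The admissible range of $d$ I would read off by dimension counting in $\R^{n+1}$: since $\Sf^i\subset\R^{k-1}$ one has $i\le k-2$, and since $\Sf^i$ is a proper subsphere of $\Sf^m$ one has $i\le m-1$, giving $d\le\min\{k-1,m\}$; dually, $\dim(\mathrm{aff}(\hat S)\cap\R^{k-1})\ge(m+1)+(k-1)-(n+1)$ forces $i\ge m+k-n-2$, that is $d\ge k-1-n+m$. The value $k=1$, for which $\R^{k-1}$ is a single point, is read off directly and yields only $\Sf^m$ or $\R^m$. I expect the genuinely delicate steps to be the passage from covering map to embedding—above all, ruling out nontrivial covers in the non--simply--connected case $m-d=1$—together with the bookkeeping of exactly which components $\Sigma_0$, and hence which values of $d$, are realized; the dimension count and the identification of the homeomorphism types are routine.
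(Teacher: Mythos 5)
Your approach is the same as the paper's: pass to the conformal model $(\R^{n+1}\setminus\R^{k-1},g)$, use the conformal invariance of umbilicity via (\ref{eq:rel}) to place the image inside a round $m$-sphere or $m$-plane $\hat S\subset\R^{n+1}$, use completeness to conclude that the image is a full connected component of $\hat S\setminus\R^{k-1}$, and identify the components. On the central step you are in fact \emph{more} careful than the paper: where you prove that $\bar f$ is a Riemannian covering onto a component $\Sigma_0$ (completeness of $(\hat S\setminus\R^{k-1},g)$ plus the theorem that a local isometry from a complete manifold to a connected manifold is a covering), the paper simply asserts, with no argument, that $f$ factors as $\bar f\circ\nu$ with $\nu$ a \emph{diffeomorphism} onto a component. (One small repair to your completeness argument: for affine $\hat S$ the blow-up of $\varphi$ near $\R^{k-1}$ alone does not control curves escaping to infinity; you also need that such curves have infinite $g$-length, which follows from $\varphi(x)=\|x^\perp\|^{-1}\geq\|x\|^{-1}$ and the resulting logarithmic divergence.)

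The step you leave open --- triviality of the covering when $\pi_1(\Sigma_0)=\mathbb{Z}$, i.e.\ $m-d=1$ --- is a genuine gap, and it is precisely the point buried in the paper's unproved assertion; moreover it cannot be closed, because the embedding claim fails there. Indeed, if $\Sigma_0\cong\Sf^1\times\R^{m-1}$ (for instance the totally geodesic $\Hy^{m-1}\times\Sf^1\subset\Hy^k\times\Sf^{n-k+1}$, or the $\Theta$-image of a $2$-sphere meeting $\R^{k-1}$ in two points when $m=2$), then the universal Riemannian covering $\tilde\Sigma_0\to\Sigma_0$ has complete total space, and its composition with the umbilical inclusion of $\Sigma_0$ is a complete umbilical isometric immersion that is not injective. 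So your worry is well founded and reflects a defect of the statement, not of your argument. A second, smaller, shared issue: your dimension count (like the range of $d$ in the proposition) presumes that $\hat S$ meets $\R^{k-1}$ transversally along a subsphere. For $k\geq 2$ the intersection can also be empty --- e.g.\ a round $m$-sphere inside an affine subspace disjoint from $\R^{k-1}$, giving $M\cong\Sf^m$, i.e.\ $d=0$, which violates the stated lower bound $k-1-n+m$ whenever $m\geq n-k+2$ --- or a single tangency point (giving $\R^m$), or, for affine $\hat S\supset\R^{k-1}$, all of $\R^{k-1}$ (giving $\Sf^{m-k}\times\R^{k}$, beyond the stated upper bound $k-1$); the paper's own proof lists the first two of these cases even though the stated range does not accommodate them. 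In short, your proof is as complete as the paper's, and the places where it stalls are places where the proposition itself needs amending rather than your argument.
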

\proof We may assume that $f$ is an isometric immersion into $(\R^{n+1}\setminus\R^{k-1},g)$. Since umbilicity of an isometric immersion is invariant under conformal changes of the ambient space by (\ref{eq:rel}), then there exists a standard isometric embedding $\bar f\colon \bar{M}^m \to (\R^{n+1}, g_0)$, where $\bar{M}^m$ stands for $\Sf^m$ or $\R^{m}$, and a diffeomorphism $\nu\colon M^m\to \bar{M}^m\setminus \bar f^{-1}(\R^{k-1})$
such that $f=\bar f\circ \nu$. If $\bar{M}^m\setminus \bar f^{-1}(\R^{k-1})$ is not connected, we consider $\nu\colon M^m\to U\subset\bar{M}\setminus \bar f^{-1}(\R^{k-1})$, where $U$ is a connected component. Therefore, $f$ is an embedding and $M^m$ is diffeomorphic to $\bar{M}^m\setminus\bar f^{-1}(\R^{k-1})$, which, for $\bar{M}^m=\Sf^m$, is either $\Sf^m$ itself, if $\bar f(\Sf^m)\cap \R^{k-1}=\emptyset$,  $\Sf^m\setminus \{p\}$, if $\bar f(\Sf^m)$ is tangent to $\R^{k-1}$ at $p\in \R^{k-1}$, or $\Sf^m\setminus \Sf^{d-1}$, if $\bar f(\Sf^n)$ intersects $\R^{k-1}$
along a $(d-1)$-dimensional sphere $\Sf^{d-1}$, $k-1-n+m\leq d\leq \min\{k-1,m\}$ (here $\Sf^0$ consists of two points). Clearly, in the second possibility, $M^m$ is diffeomorphic to $\R^m$. In the last one, notice that $\Sf^m\setminus \Sf^{d-1}$ is 
diffeomorphic to $\R^{m}\setminus \R^{d-1}$ by a stereographic projection with respect to a point of $\Sf^{d-1}$, and the latter, in turn, is diffeomorphic to $\Hy^{d}\times \Sf^{m-d}$ by the diffeomorphism $\Theta$ given by (\ref{eq:Theta}), or equivalently, to $\Sf^{m-d}\times \R^{d}$, as stated. The proof when $\bar{M}^m=\R^m$ is similar.\qed

\begin{remark}\emph{It is also clear from the proof of Proposition \ref{prop:topology} that
if $f\colon M^m\to \Hy^k\times \Sf^{n-k+1}$ is an umbilical isometric immersion, then $f(M)$ is an open subset of (the image of)  an umbilical isometric embedding $f\colon \bar{M}^m\to \Hy^k\times \Sf^{n-k+1}$ of a  complete Riemannian manifold $\bar{M}^m$.}
\end{remark}

\section{The congruence problem}

In this section we address the congruence problem for umbilical submanifolds  of $\Hy^k\times \Sf^{n-k+1}$. We obtain in the next subsection an algebraic criterion to decide when two such submanifolds are congruent and then apply it in the subsequent subsections to umbilic submanifolds with codimension one or two of the conformal model $(\R^{n+1}\setminus\R^{k-1},g)$ of $\Hy^k\times \Sf^{n-k+1}$.

\subsection{The algebraic criterion}

 First we show how any umbilical submanifold  of the product model $\Hy^k\times \Sf^{n-k+1}$ arises.
 
 \begin{proposition}\label{intersection} The umbilical submanifolds  of $\Hy^k\times \Sf^{n-k+1}$ with codimension $p$ are the intersections of $\Hy^k\times \Sf^{n-k+1}\subset \Le^{n+3}$ with the codimension $p$ time-like subspaces of $\Le^{n+3}$.
 \end{proposition}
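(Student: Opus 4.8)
The plan is to transport the problem to the conformal model $(\R^{n+1}\setminus\R^{k-1},g)$ and combine the classical classification of umbilical submanifolds of Euclidean space with the light-cone dictionary recalled in Section~2. The starting observation is that $\Hy^k\times\Sf^{n-k+1}=\Theta(\R^{n+1}\setminus\R^{k-1})$ lies on the light cone $\Ve^{n+2}$: by (\ref{eq:Theta}) each of its points is the positive multiple $\|x^\perp\|^{-1}\Psi(x)$ of a point $\Psi(x)\in\Ee^{n+1}\subset\Ve^{n+2}$, the factor $\|x^\perp\|^{-1}$ being exactly what normalizes the $W_2$-component to have unit length (so that the $W_1$-component has squared length $-1$).

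First I would take an umbilical submanifold $f\colon M^m\to\Hy^k\times\Sf^{n-k+1}$ of codimension $p=n+1-m$. Since umbilicity is invariant under conformal changes of the ambient metric by (\ref{eq:rel}), and $\Theta$ is an isometry onto $\Hy^k\times\Sf^{n-k+1}$ from $(\R^{n+1}\setminus\R^{k-1},g)$ with $g=\varphi^2g_0$ conformal to the Euclidean metric, the submanifold $\Theta^{-1}\circ f$ is umbilical for $g_0$ as well, hence, exactly as in the proof of Proposition~\ref{prop:topology}, an open subset of a round $m$-sphere or $m$-dimensional affine subspace $\mathcal{S}\subset\R^{n+1}$ of codimension $p$ meeting $\R^{n+1}\setminus\R^{k-1}$. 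By the dictionary of Section~2, such an $\mathcal{S}$ is encoded by a space-like subspace $V\subset\Le^{n+3}$ of dimension $p$ via $\Psi(\mathcal{S})=\Ee^{n+1}\cap V^\perp$, and since $V$ is space-like its orthogonal complement $V^\perp$ is a time-like subspace of codimension $p$.

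The key step is then the identity
\[
(\Hy^k\times\Sf^{n-k+1})\cap V^\perp=\Theta(\mathcal{S}\setminus\R^{k-1}),
\]
which follows at once from the light-cone observation above: for $x\notin\R^{k-1}$ the vector $\Theta(x)$ is a positive multiple of $\Psi(x)$ and $V^\perp$ is a \emph{linear} subspace, so $\Theta(x)\in V^\perp$ if and only if $\Psi(x)\in V^\perp$, i.e.\ if and only if $x\in\mathcal{S}$; since every point of $\Hy^k\times\Sf^{n-k+1}$ equals $\Theta(x)$ for a unique $x$, no spurious intersection points arise. This exhibits $f(M)$ as (an open subset of) the intersection of $\Hy^k\times\Sf^{n-k+1}$ with a codimension $p$ time-like subspace. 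For the converse I would start from an arbitrary time-like subspace $U$ of codimension $p$, set $V:=U^\perp$ (space-like of dimension $p$), let $\mathcal{S}$ be the corresponding sphere or affine subspace with $\Psi(\mathcal{S})=\Ee^{n+1}\cap U$, and run the same identity backwards: $(\Hy^k\times\Sf^{n-k+1})\cap U=\Theta(\mathcal{S}\setminus\R^{k-1})$ is umbilical because $\mathcal{S}$ is umbilical for $g_0$ and umbilicity is conformally invariant.

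I expect the only genuine subtlety to be the displayed identity, which rests entirely on the two facts that $\Hy^k\times\Sf^{n-k+1}$ sits in the light cone as positive rescalings of the Euclidean model $\Ee^{n+1}$ and that membership in the linear subspace $V^\perp$ is scale-invariant; once this is isolated, the remainder is bookkeeping of dimensions and codimensions, together with the (minor) need to discard the degenerate time-like subspaces for which $\mathcal{S}\subset\R^{k-1}$ and the intersection is empty.
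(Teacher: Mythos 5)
Your proposal is correct and follows essentially the same route as the paper: conformal invariance of umbilicity reduces to the Euclidean classification in the model $(\R^{n+1}\setminus\R^{k-1},g)$, the Section~2 dictionary $\Psi(\mathcal{S})=\Ee^{n+1}\cap V^\perp$ supplies the space-like subspace $V$, and the relation $\Theta(x)=\|x^\perp\|^{-1}\Psi(x)$ together with the linearity of $V^\perp$ gives $\Theta(\mathcal{S}\cap(\R^{n+1}\setminus\R^{k-1}))=(\Hy^k\times\Sf^{n-k+1})\cap V^\perp$. Your explicit treatment of the converse direction and of the degenerate case $\mathcal{S}\subset\R^{k-1}$ is a small amount of added care, not a different argument.
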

 \proof Since umbilical submanifolds are preserved under a conformal change of the metric of the ambient manifold, the umbilical submanifolds  of $(\R^{n+1}\setminus\R^{k-1},g)$ are the open subsets of the intersections $\mathcal{S}\cap (\R^{n+1}\setminus\R^{k-1})$ with $(\R^{n+1}\setminus\R^{k-1})$ of the spheres or affine subspaces $\mathcal{S}\subset \mathbb{R}^{n+1}$. Therefore, the umbilical submanifolds  of $\Hy^k\times \Sf^{n-k+1}\subset \Le^{n+3}$ with codimension $p$ are the open subsets of the images $\Theta(\mathcal{S}\cap (\R^{n+1}\setminus\R^{k-1}))$ under $\Theta$ of such intersections. As discussed in Section $2$, for each sphere or affine subspace $\mathcal{S}\subset \mathbb{R}^{n+1}$ there exists a space-like subspace $V\subset  \Le^{n+3}$ with dimension $p$ such that $\Psi(\mathcal{S})=\mathbb{E}^{n+1}\cap V^\perp$. Since $\Theta(x)=\|x^\perp\|^{-1}\Psi(x)$ for all $x\in \R^{n+1}\setminus\R^{k-1}$ and $\Theta((\R^{n+1}\setminus\R^{k-1})=\Hy^k\times \Sf^{n-k+1}$, it follows that $\Theta(\mathcal{S}\cap (\R^{n+1}\setminus\R^{k-1}))=(\Hy^k\times \Sf^{n-k+1})\cap V^\perp$. \vspace{1ex}\qed
 
  The next result reduces the problem of deciding when two  substantial  umbilical submanifolds  of $(\R^{n+1}\setminus\R^{k-1},g)=\Hy^k\times \Sf^{n-k+1}$ with the same dimension are congruent to an easier one in terms of their corresponding space-like subspaces.

\begin{proposition}\label{congruence} Let $S$ and $\tilde S$ be  umbilical submanifolds  of $(\R^{n+1}\setminus\R^{k-1},g)$ with the same dimension. Let  $V$ and $\tilde V$ be their correspondent space-like subspaces of $\Le^{n+3}=W_1\oplus W_2$.
Then $S$ and $\tilde S$ are congruent in  $(\R^{n+1}\setminus\R^{k-1},g)$ if and only if  there exists $T\in O_1(n+3)$ leaving $W_1$ and $W_2$ invariant such that $T(V)=\tilde V$.
\end{proposition}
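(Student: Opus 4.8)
The plan is to combine the description of the isometries of $(\R^{n+1}\setminus\R^{k-1},g)$ furnished by Theorem~\ref{isometries} with the correspondence between spheres (or affine subspaces) $\mathcal{S}\subset\R^{n+1}$ and space-like subspaces $V\subset\Le^{n+3}$ recalled in Section~$2$. Recall that $S$ (resp. $\tilde S$) is an open subset of $\mathcal{S}\cap(\R^{n+1}\setminus\R^{k-1})$ (resp. $\tilde{\mathcal{S}}\cap(\R^{n+1}\setminus\R^{k-1})$), where $\mathcal{S},\tilde{\mathcal{S}}$ are the spheres or affine subspaces whose correspondent space-like subspaces are $V,\tilde V$, so that $\Psi(\mathcal{S})=\Ee^{n+1}\cap V^\perp$ and $\Psi(\tilde{\mathcal{S}})=\Ee^{n+1}\cap\tilde V^\perp$. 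I would first record that this correspondence is a bijection: since $\Ee^{n+1}\cap V^\perp$ linearly spans the Lorentzian subspace $V^\perp$, one recovers $V=(\spa\,\Psi(\mathcal{S}))^{\perp}$, so $V$ is uniquely determined by $\mathcal{S}$. This uniqueness will be the crucial input in the necessity direction.

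The heart of the argument is the observation that the isometries of Theorem~\ref{isometries} act \emph{covariantly} on correspondent subspaces: if $T\in O_1(n+3)$ leaves $W_1$ and $W_2$ invariant and $F=\mathcal{C}(T\circ\Psi)$ is the associated isometry of $(\R^{n+1}\setminus\R^{k-1},g)$, then $F$ carries $\mathcal{S}$ onto the sphere or affine subspace whose correspondent space-like subspace is $T(V)$. To see this I would use the identity $\Psi\circ F=\Pi\circ T\circ\Psi$ from Section~$2$, where $\Pi(u)=u/\<u,w\>$. For $x\in\mathcal{S}\cap(\R^{n+1}\setminus\R^{k-1})$ and any $\zeta\in V$, writing $z=T\zeta\in T(V)$ and using that $T$ is a linear isometry,
\begin{equation*}
\<\Psi(F(x)),z\>=\frac{1}{\<T\Psi(x),w\>}\<T\Psi(x),T\zeta\>=\frac{\<\Psi(x),\zeta\>}{\<T\Psi(x),w\>}=0,
\end{equation*}
since $\Psi(x)\in V^\perp$. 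Hence $\Psi(F(\mathcal{S}))\subset\Ee^{n+1}\cap(T(V))^\perp$; as $T(V)$ is again space-like of dimension $\dim V$, both sides describe spheres or affine subspaces of the same dimension, so in fact $F(\mathcal{S})$ is precisely the one correspondent to $T(V)$.

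Granting this, the sufficiency direction is immediate: if $T(V)=\tilde V$, then the isometry $F=\mathcal{C}(T\circ\Psi)$ maps $\mathcal{S}$ onto $\tilde{\mathcal{S}}$ and, by Theorem~\ref{isometries}, preserves $\R^{n+1}\setminus\R^{k-1}$, whence $S$ and $\tilde S$ are congruent in $(\R^{n+1}\setminus\R^{k-1},g)$. For the necessity, suppose $\tilde S=F(S)$ for an isometry $F$; by Theorem~\ref{isometries}, $F=\mathcal{C}(T\circ\Psi)$ for some $T\in O_1(n+3)$ leaving $W_1$ and $W_2$ invariant. Since $F$ maps an open subset of $\mathcal{S}$ onto an open subset of $\tilde{\mathcal{S}}$ and, being a conformal diffeomorphism of $\R^{n+1}$, sends spheres or affine subspaces to spheres or affine subspaces, the ambient loci must coincide, i.e. $F(\mathcal{S})=\tilde{\mathcal{S}}$. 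By the observation above $F(\mathcal{S})$ is correspondent to $T(V)$, while $\tilde{\mathcal{S}}$ is correspondent to $\tilde V$; by uniqueness of the correspondence, $T(V)=\tilde V$.

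The main obstacle I expect is establishing the covariance $F(\mathcal{S})\leftrightarrow T(V)$; once the identity $\Psi\circ F=\Pi\circ T\circ\Psi$ is invoked this collapses to the short inner-product computation above, the only delicate points being to upgrade the inclusion $\Psi(F(\mathcal{S}))\subset\Ee^{n+1}\cap(T(V))^\perp$ to an equality by a dimension count, and to argue that congruence of the open pieces $S,\tilde S$ forces equality of the full ambient spheres $F(\mathcal{S})=\tilde{\mathcal{S}}$ via the uniqueness of the sphere determined by an open subset.
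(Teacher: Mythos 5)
Your proof is correct, and its logical skeleton is the same as the paper's (reduce congruence, via Theorem~\ref{isometries}, to the existence of $T\in O_1(n+3)$ preserving $W_1$ and $W_2$ that carries one correspondent subspace to the other), but you execute it in the Euclidean conformal model where the paper works in the product model, and the key auxiliary fact differs accordingly. The paper first proves Proposition~\ref{intersection}, namely $\Theta(S)=(\Hy^k\times \Sf^{n-k+1})\cap V^\perp$, and then uses characterization $(ii)$ of Theorem~\ref{isometries}, $\Theta\circ F=T\circ\Theta$: since $T$ is a linear isometry preserving $W_1$ and $W_2$, it maps $\Hy^k\times \Sf^{n-k+1}$ onto itself, so the chain $F(S)=\tilde S\Leftrightarrow T(\Theta(S))=\Theta(\tilde S)\Leftrightarrow T((\Hy^k\times \Sf^{n-k+1})\cap V^\perp)=(\Hy^k\times \Sf^{n-k+1})\cap \tilde V^\perp\Leftrightarrow T(V^\perp)=\tilde V^\perp$ needs no rescaling whatsoever. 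You instead invoke characterization $(iii)$, $\Psi\circ F=\Pi\circ T\circ\Psi$, and prove a covariance lemma for the sphere--subspace correspondence, which is where the $\Pi$-denominator computation enters; in exchange, you make explicit the injectivity of the correspondence $\mathcal{S}\mapsto V$ (equivalently, that $\Psi$ of an open piece of $\mathcal{S}$ spans $V^\perp$), a fact the paper uses only implicitly in its last equivalence, so your write-up is more self-contained on that point. Two details you should tighten: first, the formula $\Psi(F(x))=T\Psi(x)/\<T\Psi(x),w\>$ requires $\<T\Psi(x),w\>\neq 0$, which holds because $T$ preserves $W_1\ni w$, so $T\Psi(x)\in\R w$ would force $\Psi(x)\in W_1$, i.e. $x\in\R^{k-1}$; second, your dimension count only shows that $F(\mathcal{S}\cap(\R^{n+1}\setminus\R^{k-1}))$ is \emph{open} in the sphere correspondent to $T(V)$, and the cleanest way to get the onto statement (hence $F(S)=\tilde S$ in the sufficiency direction) is to apply the same covariance observation to $F^{-1}=\mathcal{C}(T^{-1}\circ\Psi)$.
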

\proof Let $S$ and $\tilde S$ be  umbilical submanifolds  of $(\R^{n+1}\setminus\R^{k-1},g)$ with the same dimension, and let  $V$ and $\tilde V$ be their correspondent space-like subspaces of $\Le^{n+3}$. By Proposition \ref{intersection},  $\Theta(S)=(\Hy^k\times \Sf^{n-k+1})\cap V^\perp$ and $\Theta(\tilde{S})=(\Hy^k\times \Sf^{n-k+1})\cap {\tilde V}^\perp$. Since any isometry $F\colon (\R^{n+1}\setminus\R^{k-1},g)\to (\R^{n+1}\setminus\R^{k-1},g)$ is given by  $\Theta\circ F=T\circ \Theta$ for some $T\in O_1(n+3)$ that
leaves $W_1$ and $W_2$ invariant by Theorem \ref{isometries}, it follows that
$$\begin{array}{l}
F(S)=\tilde S\Longleftrightarrow \Theta(F(S))=\Theta(\tilde S)
\Longleftrightarrow T(\Theta(S))=\Theta(\tilde S)\vspace{1ex}\\
\Longleftrightarrow T((\Hy^k\times \Sf^{n-k+1})\cap V^\perp)=(\Hy^k\times \Sf^{n-k+1})\cap {\tilde V}^\perp\Longleftrightarrow T(V^\perp)={\tilde V}^\perp.\end{array} \qed
$$

\begin{proposition}\label{alg} Let $W$ be a vector space endowed with a Lorentzian inner product and let $W=W_1\oplus W_2$ be an orthogonal direct sum decomposition, with $W_1$ time-like.  For $w\in W$, write $w=w^T+w^\perp$ according to the decomposition $W=W_1\oplus W_2$. Let $V$ and $\tilde V$  be space-like subspaces of $W$ with the same dimension. Assume that there exists a  linear isometry $T\colon W\to W$ preserving $W_1$ and $W_2$ such that $T(V)=\tilde V$. Then, for any orthonormal bases 
 $\{\xi_1, \ldots, \xi_m\}$ and  $\{\tilde{\xi}_1, \ldots, \tilde{\xi}_m\}$ of  $V$ and $\tilde V$, respectively, the Gram matrices $(\<\xi^T_i, \xi^T_j\>)_{1\leq i,j\leq m}$ 
and $(\<\tilde{\xi}^T_i, \tilde{\xi}^T_j\>)_{1\leq i,j\leq m}$ (or equivalently, the Gram matrices $(\<\xi^\perp_i, \xi^\perp_j\>)_{1\leq i,j\leq m}$ and  $(\<\tilde{\xi}^\perp_i, \tilde{\xi}^\perp_j\>)_{1\leq i,j\leq m}$), have the same eigenvalues, counted with multiplicities.  
 
 Conversely, assume that there exist orthonormal bases 
 $\{\xi_1, \ldots, \xi_m\}$ and  $\{\tilde{\xi}_1, \ldots, \tilde{\xi}_m\}$ of  $V$ and $\tilde V$, respectively, such that 
 \begin{itemize}
 \item[(i)] the subspaces spanned by  $\{\xi^T_1, \ldots, \xi^T_m\}$ and  $\{\tilde{\xi}^T_1, \ldots, \tilde{\xi}^T_m\}$  either are both degenerate and have dimension $m$, or are both nondegenerate; 
\item[(ii)] the Gram matrices $(\<\xi^T_i, \xi^T_j\>)_{1\leq i,j\leq m}$ and  $(\<\tilde{\xi}^T_i, \tilde{\xi}^T_j\>)_{1\leq i,j\leq m}$ 
(or equivalently, the Gram matrices $(\<\xi^\perp_i, \xi^\perp_j\>)_{1\leq i,j\leq m}$ and  $(\<\tilde{\xi}^\perp_i, \tilde{\xi}^\perp_j\>)_{1\leq i,j\leq m}$) 
have the same eingenvalues, counted with multiplicities.
\end{itemize}
Then there is a  linear isometry $T\colon W\to W$ preserving $W_1$ and $W_2$ such that $T(V)=\tilde V$.
\end{proposition}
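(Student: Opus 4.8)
\emph{The plan} is to prove the two implications separately. The direct implication reduces to the orthogonal invariance of the eigenvalues of the Gram matrix, while the converse reduces, after diagonalizing, to an application of Witt's extension theorem inside the Lorentzian factor $W_1$; the Euclidean factor $W_2$ is handled by elementary means.

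For the direct implication I would first observe that, since $V$ is space-like, any two orthonormal bases of $V$ differ by an element of $O(m)$, and under such a change the Gram matrix $A=(\langle\xi_i^T,\xi_j^T\rangle)$ is replaced by $PAP^t$ with $P\in O(m)$; hence its eigenvalues are independent of the chosen orthonormal basis. Given $T$ as in the hypothesis, write $T=T_1\oplus T_2$, where $T_1,T_2$ are the (isometric) restrictions of $T$ to $W_1,W_2$. Then $\{T\xi_i\}$ is an orthonormal basis of $\tilde V$ with $(T\xi_i)^T=T_1\xi_i^T$, so $\langle (T\xi_i)^T,(T\xi_j)^T\rangle=\langle \xi_i^T,\xi_j^T\rangle$; thus the $W_1$-Gram matrices of $V$ and $\tilde V$ are orthogonally conjugate and share their eigenvalues with multiplicities. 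The parenthetical equivalence with the $W_2$-components is immediate: since the $\xi_i$ are orthonormal, $(\langle\xi_i^T,\xi_j^T\rangle)+(\langle\xi_i^\perp,\xi_j^\perp\rangle)=I_m$, so the two Gram matrices have eigenvalues $\lambda$ and $1-\lambda$ in bijection.

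For the converse I would first normalize the bases. Diagonalizing the real symmetric matrices $(\langle\xi_i^T,\xi_j^T\rangle)$ and $(\langle\tilde\xi_i^T,\tilde\xi_j^T\rangle)$ by orthogonal conjugation and reordering (possible since they have the same eigenvalues $\lambda_1,\dots,\lambda_m$ with multiplicities), I replace the given bases by new orthonormal bases $\{\eta_i\}$, $\{\tilde\eta_i\}$ of $V,\tilde V$ with $\langle\eta_i^T,\eta_j^T\rangle=\lambda_i\delta_{ij}=\langle\tilde\eta_i^T,\tilde\eta_j^T\rangle$; the relation $A+B=I_m$ then also gives $\langle\eta_i^\perp,\eta_j^\perp\rangle=(1-\lambda_i)\delta_{ij}=\langle\tilde\eta_i^\perp,\tilde\eta_j^\perp\rangle$, with $1-\lambda_i\geq 0$ since $W_2$ is positive definite. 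As $W_2$ is Euclidean, $\{\eta_i^\perp\}$ and $\{\tilde\eta_i^\perp\}$ are orthogonal systems whose members have equal norms $\sqrt{1-\lambda_i}$; mapping the linearly independent nonzero members of the first to those of the second defines a linear isometry between subspaces of $W_2$, which extends to $T_2\in O(W_2)$ with $T_2\eta_i^\perp=\tilde\eta_i^\perp$ for every $i$ (the indices with $\lambda_i=1$ giving zero on both sides).

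\emph{The main obstacle} is the construction of the Lorentzian isometry $T_1\in O(W_1)$ with $T_1\eta_i^T=\tilde\eta_i^T$, since in $W_1$ a common Gram matrix does not by itself force the two configurations to be isometric: an index $i$ with $\lambda_i=0$ yields a vector of zero norm which may be either null or zero, and this is exactly the ambiguity that hypothesis (i) removes. Because $W_1$ is time-like with $W_2$ positive definite, $W_1$ has Lorentzian signature and Witt index one, so the radical of any subspace is at most one-dimensional; hence at most one $\lambda_i$ vanishes, and the relations $\langle\eta_i^T,\eta_j^T\rangle=\lambda_i\delta_{ij}$ force the vectors with $\lambda_i\neq 0$ to be an orthogonal, hence linearly independent, system spanning a nondegenerate subspace. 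In the nondegenerate case of (i), nondegeneracy of the full span forces $\eta_i^T=0$ (and likewise $\tilde\eta_i^T=0$) whenever $\lambda_i=0$; in the degenerate, dimension-$m$ case of (i) there is exactly one vanishing $\lambda_i$ and the $\eta_i^T$ are linearly independent with a one-dimensional null radical. In either case (i) guarantees that the linear relations among $\{\eta_i^T\}$ and among $\{\tilde\eta_i^T\}$ coincide, so $\eta_i^T\mapsto\tilde\eta_i^T$ is a well-defined linear map between the spans, and it is an isometry because the Gram matrices agree. Witt's extension theorem, in the general form valid for arbitrary (possibly degenerate) subspaces of a nondegenerate space, then extends it to $T_1\in O(W_1)$. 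Finally $T=T_1\oplus T_2$ is a linear isometry preserving $W_1$ and $W_2$ with $T\eta_i=\tilde\eta_i$ for all $i$, whence $T(V)=\tilde V$.
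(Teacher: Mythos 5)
Your proof is correct, and it reorganizes the paper's argument in a genuinely different way at the two key steps. The paper keeps the given orthonormal bases and uses condition (ii) only to produce $A=(a_{ij})\in O(m)$ with $G=A\tilde{G}A^t$; it then rotates the systems $\{\tilde{\xi}_i^T\}$ and $\{\tilde{\xi}_i^\perp\}$ by $A$, observes that the rotated systems have the same Gram matrices as $\{\xi_i^T\}$ and $\{\xi_i^\perp\}$, and feeds the $W_1$-part into a separate result (Lemma \ref{le:gram}) to get $T_1$, with $T_2$ obtained from the analogous Euclidean computation. You instead diagonalize both Gram matrices at the outset, replacing the bases of $V$ and $\tilde V$ by ones whose $W_1$- and $W_2$-components are orthogonal systems with matching norms (legitimate, since condition (i) concerns only the spans, which are unchanged under an $O(m)$ change of basis), then build $T_2$ by hand and obtain $T_1$ from the general, degenerate-subspace form of Witt's extension theorem. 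In effect your diagonalization reproduces inside the main proof what the paper does inside the proof of Lemma \ref{le:gram} (diagonalizing $A^tA=B^tB$), and your appeal to Witt's theorem supplies precisely the step that the paper's lemma asserts without proof (``there exists a linear isometry $T$ with $TAv_i=Bv_i$'' in both the nondegenerate and the degenerate case). What your route buys is a complete justification of that extension step and a very explicit account of how condition (i) removes the zero-vector versus null-vector ambiguity; what the paper's route buys is self-containedness (no classical theorem cited as a black box) and a reusable lemma that it also applies elsewhere (e.g.\ in Proposition \ref{umb3a}).

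One small imprecision: the inference ``the radical of any subspace is at most one-dimensional; hence at most one $\lambda_i$ vanishes'' is not valid as stated, since several $\lambda_i$ can vanish when the corresponding $\eta_i^T$ are all zero --- which is exactly what happens in the nondegenerate case of (i) when $\dim\,\mbox{span}\{\eta_i^T\}<m$. This is harmless, because your subsequent case analysis does not use it: in the nondegenerate case you correctly argue that every $\eta_i^T$ with $\lambda_i=0$ must vanish, and in the degenerate dimension-$m$ case linear independence together with the radical bound does give exactly one vanishing $\lambda_i$, which is all you need.
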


 For the proof of Proposition \ref{alg} we need the following lemma.
 
 \begin{lemma}\label{le:gram} Let $W$ be a vector space endowed with a Lorentzian  inner product $\<\;,\;\>^\sim$, and let $\{\xi_1, \ldots, \xi_m\},  \{\tilde{\xi}_1, \ldots, \tilde{\xi}_m\}\subset W$. Assume that the subspaces spanned by $\{\xi_1, \ldots, \xi_m\}$ and $\{\tilde{\xi}_1, \ldots, \tilde{\xi}_m\}$ either are both degenerate and have dimension $m$ or are both nondegenerate. Suppose further that $\<\xi_i, \xi_j\>^\sim=\<\tilde{\xi}_i, \tilde{\xi}_j\>^\sim$ for all $1\leq i,j\leq m$. Then there exists a linear isometry $T\colon W\to W$ such that $T\xi_i=\tilde{\xi}_i$, $1\leq i\leq m$.
\end{lemma}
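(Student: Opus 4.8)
\noindent\emph{Sketch of the approach.} This is a Witt-type extension statement: an isometry between two subspaces of the nondegenerate (Lorentzian) space $W$ should extend to a global linear isometry of $W$. Writing $U=\spa\{\xi_1,\dots,\xi_m\}$ and $\tilde U=\spa\{\tilde\xi_1,\dots,\tilde\xi_m\}$, the plan is first to produce the isometry $\tau\colon U\to\tilde U$ determined by $\tau\xi_i=\tilde\xi_i$, and then to extend $\tau$ to all of $W$.

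First I would check that $\tau$ is a well-defined linear isomorphism preserving the form. If $\sum_i a_i\xi_i=0$, then $\sum_i a_i\<\xi_i,\xi_j\>^\sim=0$ for every $j$, and the hypothesis $\<\xi_i,\xi_j\>^\sim=\<\tilde\xi_i,\tilde\xi_j\>^\sim$ gives $\sum_i a_i\<\tilde\xi_i,\tilde\xi_j\>^\sim=0$; hence $\sum_i a_i\tilde\xi_i$ lies in the radical $\tilde U\cap\tilde U^\perp$ of $\tilde U$. In the nondegenerate case this radical is zero, so $\sum_i a_i\tilde\xi_i=0$; in the degenerate case the hypothesis that $\tilde U$ has dimension $m$ forces the $\tilde\xi_i$ (and likewise the $\xi_i$) to be linearly independent, so no nontrivial relation occurs. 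In either case $\tau$ is well defined, bijective onto $\tilde U$, and preserves $\<\;,\;\>^\sim$ by the equality of the Gram matrices.

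The extension then splits along the two cases of the hypothesis. In the nondegenerate case $U$ and $\tilde U$ are nondegenerate of the same dimension, so $W=U\oplus U^\perp=\tilde U\oplus\tilde U^\perp$ as orthogonal direct sums. Since $\tau$ preserves the form, $U$ and $\tilde U$ have the same signature; by additivity of the signature under orthogonal sums, $U^\perp$ and $\tilde U^\perp$ have the same dimension and signature, hence are isometric over $\R$. Choosing any isometry $\sigma\colon U^\perp\to\tilde U^\perp$ and setting $T=\tau\oplus\sigma$ yields the desired global isometry.

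The main obstacle is the degenerate case, which cannot be handled by a naive orthogonal splitting. Here the radical $\mbox{rad}(U)=U\cap U^\perp$ is a nonzero isotropic subspace; since $W$ is Lorentzian it is a single null line $\spa\{e\}$, and $\tau$ carries it onto $\mbox{rad}(\tilde U)=\spa\{\tilde e\}$ with $\tilde e=\tau(e)$. The plan is to reduce to the previous case by a hyperbolic completion: write $U=U_0\oplus\spa\{e\}$ with $U_0$ a nondegenerate complement of the radical, choose $f\in U_0^\perp$ with $\<e,f\>^\sim=1$ and $\<f,f\>^\sim=0$ (possible because $U_0^\perp$ is nondegenerate and contains the null vector $e$), and set $U'=U_0\oplus\spa\{e,f\}$, which is nondegenerate. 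Performing the same construction on the tilde side and extending $\tau$ to $\tau'\colon U'\to\tilde U'$ by $\tau'(f)=\tilde f$, one checks directly from the orthogonality choices that $\tau'$ is an isometry of nondegenerate subspaces; applying the nondegenerate case to $\tau'$ then produces an isometry $T$ of $W$ extending $\tau'$, and in particular $\tau$, as required. (Alternatively, the whole lemma is a special case of the general Witt extension theorem, the two-case hypothesis being precisely what guarantees that $\tau$ is a well-defined isometry between the two subspaces.)
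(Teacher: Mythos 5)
Your proof is correct, but it takes a genuinely different route from the paper's. The paper encodes the two families in linear maps $A,B\colon\R^m\to W$ with $Ae_i=\xi_i$, $Be_i=\tilde{\xi}_i$, observes that equality of the Gram matrices means $A^tA=B^tB$, and then applies the spectral theorem to $A^tA$: in an orthonormal eigenbasis $\{v_i\}$ of $\R^m$ one gets $\<Av_i,Av_j\>^\sim=\lambda_i\delta_{ij}=\<Bv_i,Bv_j\>^\sim$, so the problem is reduced to matching two pairwise orthogonal families with equal norms, in which at most one member is a nonzero light-like vector (exactly one, in the degenerate dimension-$m$ case); the existence of an isometry $T$ with $TAv_i=Bv_i$ is then simply asserted. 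You instead work with the subspaces $U=\spa\{\xi_1,\dots,\xi_m\}$ and $\tilde U$ directly: you construct the isometry $\tau\colon U\to\tilde U$, prove it is well defined (your argument here also makes transparent why the dimension-$m$ hypothesis is needed in the degenerate case), and then carry out the extension to $W$ explicitly --- via orthogonal complements and Sylvester's law of inertia in the nondegenerate case, and via a hyperbolic completion in the degenerate case. In effect you supply a proof of the Witt-type extension step that the paper leaves as an assertion, so your argument is more self-contained at the cost of length, whereas the paper's diagonalization trick buys a cleaner final configuration (orthogonal frames with matching norms) while still implicitly resting on the same extension fact. One detail you should make explicit in the degenerate case: the complement on the tilde side must be chosen compatibly, namely $\tilde U_0=\tau(U_0)$ (which is automatically a nondegenerate complement of the radical $\spa\{\tilde e\}$, since $\tau$ is an isometry carrying radical to radical); for an arbitrary complement $\tilde U_0$ the verification $\<\tau'x,\tilde f\>^\sim=\<x,f\>^\sim$ for $x\in U$ can fail.
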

\proof Define $A\colon \R^m\to W$ and  $B\colon \R^m\to W$ by $Ae_i=\xi_i$ and $Be_i=\tilde{\xi}_i$, $1\leq i\leq m$, where $\{e_1, \ldots, e_m\}$ is the canonical basis of $\R^m$.  Then $A(\R^m)=\spa\{\xi_1, \ldots, \xi_m\}$ and $B(\R^m)=\spa \{\tilde{\xi}_1, \ldots, \tilde{\xi}_m\}$. Let $A^t\colon W\to \R^m$ and  $B^t\colon W\to \R^m$ be the transposes of $A$ and $B$, that is, $\<A^tw,v\>=\<w,Av\>^\sim$
for all $v\in \R^m$ and $w\in W$, where $\<\;, \;\>$ is the standard inner product of $\R^m$, and $B^t$ is similarly defined. Then 
$$\<A^tAe_i,e_j\>=\<Ae_i, Ae_j\>^\sim=\<\xi_i, \xi_j\>^\sim=\<\tilde{\xi}_i, \tilde{\xi}_j\>^\sim=\<Be_i,Be_j\>^\sim=\<B^tBe_i,e_j\>$$ for all $1\leq i,j\leq m$. Thus  $A^tA=B^tB$.
Since $A^tA\colon \R^m\to \R^m$ is a symmetric operator, there exists an orthonormal basis $v_1, \ldots, v_m$ of $\R^m$ and $\lambda_1, \ldots, \lambda_m\in \R$ such that 
$A^tAv_i=\lambda_i v_i$, $1\leq i\leq m$, that is, 
 $$\<Av_i, Av_j\>=\lambda_i\delta_{ij}=\<Bv_i, Bv_j\>,\;\;\;1\leq i, j\leq m.$$
 If both $A(\R^m)$ and $B(\R^m)$ are nondegenerate, this implies that there exists a linear isometry $T\colon W\to W$ such that $TAv_i=Bv_i$, $1\leq i\leq m$, and hence $TA=B$.
 On the other hand, if both $A(\R^m)$ and $B(\R^m)$ are degenerate and have dimension $m$, 
 there exists exactly one $i\in \{1,\ldots, m\}$ such that $\lambda_i=0$, in which case both $Av_i$ and $Bv_i$ are nonzero light-like vectors. Also in this case, there exists a linear isometry $T\colon W\to W$ such that $TAv_i=Bv_i$, $1\leq i\leq m$, and hence $TA=B$. 
 Thus  $T\xi_i=TAe_i=Be_i=\tilde{\xi}_i$, $1\leq i\leq m$.   \vspace{2ex}\qed

\noindent \emph{Proof of Proposition \ref{alg}:} Assume first that $T\colon W\to W$ is a  linear isometry preserving $W_1$ and $W_2$ such that $T(V)=\tilde V$, and let $\{\xi_1, \ldots, \xi_m\}$ and  $\{\tilde{\xi}_1, \ldots, \tilde{\xi}_m\}$ be orthonormal bases of  $V$ and $\tilde V$, respectively. Then 
$T\xi_j=\sum_{i=1}^m a_{ij}\tilde{\xi}_i$, $1\leq j\leq m$, for some $A=(a_{ij})\in O(m)$.  Since $T$ preserves $W_1$ and $W_2$, then $T\xi^T_j=\sum_{i=1}^m a_{ij}\tilde{\xi}^T_i$,  and hence the Gram matrices $G=(\<\xi^T_i, \xi^T_j\>)_{1\leq i,j\leq m}$ and  $\tilde G=(\<\tilde{\xi}^T_i, \tilde{\xi}^T_j\>)_{1\leq i,j\leq m}$ are related by $G=A\tilde{G}A^t$. It follows that  $G$ and $\tilde G$  have the same eingenvalues, counted with multiplicities.  

Conversely, assume that there exist orthonormal bases 
 $\{\xi_1, \ldots, \xi_m\}$ and  $\{\tilde{\xi}_1, \ldots, \tilde{\xi}_m\}$ of  $V$ and $\tilde V$, respectively, satisfying conditions $(i)$ and $(ii)$ in the statement.  
Condition $(ii)$ implies that there exists $A=(a_{ij})\in O(m)$ such that $G=A\tilde{G}A^t$. Defining 
$\bar{\xi}_j=\sum_{i=1}^m a_{ij}\tilde{\xi}^T_i$, the Gram matrices $\bar G=(\<\bar{\xi}_i, \bar{\xi}_j\>)_{1\leq i,j\leq m}$ and $\tilde G$  
are also related by $\bar G=A\tilde{G}A^t$, and hence $G=\bar G$. By Lemma \ref{le:gram} and condition $(i)$,  there exists a linear isometry $T_1\colon W_1\to W_1$ such that $T_1\xi^T_j=\bar{\xi}_j$, $1\leq j\leq m$. 

  Now define $\hat{\xi}_j=\sum_{i=1}^m a_{ij}\tilde{\xi}^\perp_i$. We have
  \begin{eqnarray*} \<\hat{\xi}_j, \hat{\xi}_k\>&=&\<\sum_{i=1}^m a_{ij}\tilde{\xi}^\perp_i, 
  \sum_{\ell=1}^m a_{\ell k}\tilde{\xi}^\perp_\ell\>\vspace{1ex}\\
&=&  \sum_{i,\ell=1}^m a_{ij}a_{\ell k}\<\tilde{\xi}^\perp_i, \tilde{\xi}^\perp_\ell\>\vspace{1ex}\\
&=&  \sum_{i,\ell=1}^m a_{ij}a_{\ell k}(\delta_{i\ell}-\<\tilde{\xi}^T_i, \tilde{\xi}^T_\ell\>)\vspace{1ex}\\
&=& \sum_{i=1}^m a_{ij}a_{ik} -\sum_{i,\ell=1}^m a_{ij}a_{\ell k}\<\tilde{\xi}^T_i, \tilde{\xi}^T_\ell\>\vspace{1ex}\\
&=& \delta_{jk}-\<{\xi}^T_j, {\xi}^T_k\>\vspace{1ex}\\
&=&\<{\xi}^\perp_j, {\xi}^\perp_k\>.
\end{eqnarray*}
Since $W_2$ is space-like, there exists  a linear isometry $T_2\colon W_2\to W_2$ such that $T_2\xi^\perp_j=\hat{\xi}_j$, $1\leq j\leq m$. Defining
$T\colon W\to W$ by $T|_{W_1}=T_1$ and $T|_{W_2}=T_2$, it follows that $T$ is a linear isometry preserving $W_1$ and $W_2$ such that 
$$T\xi_j=T_1\xi^T_j+T_2\xi^\perp_j=\sum_{i=1}^m a_{ij}\tilde{\xi}^T_i+\sum_{i=1}^m a_{ij}\tilde{\xi}^\perp_i=\sum_{i=1}^m a_{ij}\tilde{\xi}_i,
$$
and hence  $T(V)=\tilde{V}$. \vspace{1ex}\qed
\begin{remark} \emph{In Lemma \ref{le:gram}, if the subsets $\{\xi_1, \ldots, \xi_m\}$ and $\{\tilde{\xi}_1, \ldots, \tilde{\xi}_m\}$ are assumed to be linearly independent, then the hypothesis that $\<\xi_i, \xi_j\>^\sim=\<\tilde{\xi}_i, \tilde{\xi}_j\>^\sim$ for all $1\leq i,j\leq m$ already implies that the subspaces spanned by $\{\xi_1, \ldots, \xi_m\}$ and $\{\tilde{\xi}_1, \ldots, \tilde{\xi}_m\}$ either are both degenerate or  both nondegenerate, for the degeneracy of the subspace spanned by a linearly independent subset $\{\xi_1, \ldots, \xi_m\}$ is characterized by the vanishing of the determinant of the Gram matrix  $(\<\xi_i, \xi_j\>)_{1\leq i,j\leq m}$. This follows from the proof of Lemma~\ref{le:gram}, which shows that if $A(\R^m)$ has dimension $m$ then it is degenerate if and only if one of the eigenvalues $\lambda_1, \ldots, \lambda_m$ of the operator $A^tA$ is zero. Accordingly, if in the converse statement of Proposition \ref{alg} the subsets $\{\xi^T_1, \ldots, \xi^T_m\}$ and $\{\tilde{\xi}^T_1, \ldots, \tilde{\xi}^T_m\}$ are assumed to be linearly independent, then condition $(i)$ is a consequence of condition $(ii)$.}
\end{remark}

 The next result gives an algebraic criterion to decide whether an umbilical submanifold  of  $(\R^{n+1}\setminus\R^{k-1},g)$ is substantial.

\begin{proposition} \label{prop:substantial}  An umbilical submanifold $S$ of  $(\R^{n+1}\setminus\R^{k-1},g)$ with corresponding space-like subspace $V\subset \Le^{n+3}$ is substantial if and only if  $V\cap W_1$ and $V\cap W_2$ are trivial. 
\end{proposition}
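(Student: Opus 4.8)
The goal is to characterize when an umbilical submanifold $S$ is substantial, meaning it is not contained in any totally geodesic submanifold of $(\R^{n+1}\setminus\R^{k-1},g)$. By Corollary \ref{cor:totgeosub}, the totally geodesic submanifolds correspond precisely to space-like subspaces of the split form $\tilde V=\tilde V_1\oplus \tilde V_2$ with $\tilde V_1\subset W_1$ and $\tilde V_2\subset W_2$. The plan is to translate the geometric statement ``$S$ is contained in a totally geodesic submanifold'' into the subspace language: $S\subset \tilde S$ is equivalent to the reverse inclusion $\tilde V\subset V$ of the corresponding space-like subspaces (since passing to the perpendicular $V^\perp\subset \tilde V^\perp$ reverses inclusions, and the intersection with $\Hy^k\times\Sf^{n-k+1}$ respects this). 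Thus $S$ is substantial if and only if the only split subspace of the form $\tilde V_1\oplus\tilde V_2$ contained in $V$ is the trivial one.

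First I would prove the easy implication. Suppose $V\cap W_1$ or $V\cap W_2$ is nontrivial; say $0\neq\eta\in V\cap W_1$. Then the one-dimensional space $\spa\{\eta\}$ is already split (it lies in $W_1$, so $\tilde V_1=\spa\{\eta\}$, $\tilde V_2=0$), is space-like, and is contained in $V$. Its perpendicular $\spa\{\eta\}^\perp$ defines a totally geodesic hypersurface containing $S$, so $S$ is not substantial. The same argument applies if $V\cap W_2\neq 0$. This handles the ``only if'' direction by contraposition.

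For the converse, I would assume $V\cap W_1=0=V\cap W_2$ and show that $V$ contains no nontrivial split subspace, equivalently that no totally geodesic proper submanifold contains $S$. Suppose $\tilde V=\tilde V_1\oplus\tilde V_2\subset V$ is a nonzero split subspace with $\tilde V_1\subset W_1$, $\tilde V_2\subset W_2$. If $\tilde V_1\neq 0$, pick $0\neq\eta\in\tilde V_1\subset V\cap W_1$, contradicting $V\cap W_1=0$; similarly $\tilde V_2=0$. Hence $\tilde V=0$, so $S$ is substantial.

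The substance of the argument is therefore establishing the dictionary ``$S\subset$ totally geodesic submanifold $\Leftrightarrow$ $V$ contains a nontrivial split subspace.'' The main obstacle is proving carefully that inclusion of umbilical submanifolds $S\subset\tilde S$ corresponds to the inclusion $\tilde V\subset V$ of space-like subspaces. I would verify this by writing $\Theta(S)=(\Hy^k\times\Sf^{n-k+1})\cap V^\perp$ and $\Theta(\tilde S)=(\Hy^k\times\Sf^{n-k+1})\cap\tilde V^\perp$ via Proposition \ref{intersection}, noting that since $S$ and $\tilde S$ are open subsets of intersections with affine subspaces/spheres, the inclusion of the full intersections is controlled by the inclusion $V^\perp\subset\tilde V^\perp$, which by taking orthogonal complements (both $V,\tilde V$ being space-like, hence nondegenerate) is equivalent to $\tilde V\subset V$; here one must argue that an open subset determines the ambient intersection, which follows because these intersections are connected submanifolds (spheres or affine subspaces) and real-analytic, so the smallest such containing $S$ is $\Theta^{-1}((\Hy^k\times\Sf^{n-k+1})\cap V^\perp)$ itself.
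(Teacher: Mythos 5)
Your proof is correct and follows essentially the same route as the paper: both reduce the statement to Corollary \ref{cor:totgeosub} together with the dictionary that containment of umbilical submanifolds corresponds to reverse containment of their space-like subspaces, so that $S$ lies in a totally geodesic submanifold exactly when $V$ contains a nontrivial split subspace, i.e.\ when $V\cap W_1$ or $V\cap W_2$ is nonzero. The only difference is that you explicitly justify the harder direction of this dictionary (that $S\subset\tilde S$ forces $\tilde V\subset V$, via the span/analyticity argument), a step the paper's proof leaves implicit.
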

\proof Since $\Psi(S)=\Ee^{n+1}\cap V^\perp$, then $S$ is contained in a totally geodesic submanifold $\tilde S$, with $\Psi(\tilde S)=\Ee^{n+1}\cap {\tilde V}^\perp$ for a space-like subspace $\tilde V\subset \Le^{n+3}$, if $V\supset \tilde V$. By Corollary~ \ref{cor:totgeosub}, 
$\tilde V=\tilde{V}_1\oplus\tilde{V}_2$ with 
$\tilde{V}_1\subset W_1$ and $\tilde{V}_2\subset W_2$.
Therefore, $S$ is contained in some totally geodesic submanifold $\tilde S$ if and only if either $V\cap W_1$ or $V\cap W_2$ is non trivial. \qed

\begin{corollary}\label{substantial} The substantial codimension of an  umbilical submanifold  of $\Hy^k\times \Sf^{n-k+1}$ is at most $\mbox{min}\,\{k+1, n-k+2\}$.
 \end{corollary}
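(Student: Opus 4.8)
The plan is to combine the algebraic characterization of substantiality in Proposition \ref{prop:substantial} with an elementary dimension count coming from the orthogonal projections onto the two factors of the decomposition $\Le^{n+3}=W_1\oplus W_2$. First I would recall that, by Proposition \ref{intersection}, an umbilical submanifold $S$ of $\Hy^k\times\Sf^{n-k+1}$ with codimension $p$ corresponds to a space-like subspace $V\subset\Le^{n+3}$ with $\dim V=p$, via $\Theta(S)=(\Hy^k\times\Sf^{n-k+1})\cap V^\perp$. Thus when $S$ is substantial its substantial codimension equals $\dim V$, and the problem reduces to bounding $\dim V$.

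Next I would invoke Proposition \ref{prop:substantial}: the submanifold $S$ is substantial precisely when $V\cap W_1=0$ and $V\cap W_2=0$. Let $\pi_1\colon\Le^{n+3}\to W_1$ and $\pi_2\colon\Le^{n+3}\to W_2$ denote the orthogonal projections associated with $\Le^{n+3}=W_1\oplus W_2$, so that $\ker\pi_1=W_2$ and $\ker\pi_2=W_1$. The condition $V\cap W_2=0$ says exactly that $\ker(\pi_1|_V)=V\cap W_2=0$, i.e.\ that $\pi_1|_V$ is injective, whence $\dim V\le\dim W_1$; symmetrically, $V\cap W_1=0$ forces $\pi_2|_V$ to be injective, whence $\dim V\le\dim W_2$.

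Finally I would read off the two dimensions from the definitions in Section $3$: since $W_1=V_1\oplus\spa\{v,w\}$ with $V_1=C(\R^{k-1})$, we have $\dim W_1=(k-1)+2=k+1$, while $W_2=V_2=C(\R^{n-k+2})$ gives $\dim W_2=n-k+2$. Combining the two inequalities yields $p=\dim V\le\min\{k+1,n-k+2\}$, as claimed. The argument is essentially immediate once Proposition \ref{prop:substantial} is available, so I do not expect a genuine obstacle; the only points requiring a little care are the identification of the codimension $p$ with $\dim V$ and the correct bookkeeping of $\dim W_1$ and $\dim W_2$, together with the remark that the injectivity of each projection is equivalent to the corresponding triviality of $V\cap W_i$.
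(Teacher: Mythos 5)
Your proposal is correct and follows exactly the route the paper intends: the corollary is stated as an immediate consequence of Proposition \ref{prop:substantial}, and your dimension count (injectivity of the projections $\pi_i|_V$ from $V\cap W_2=0$ and $V\cap W_1=0$, with $\dim W_1=k+1$ and $\dim W_2=n-k+2$) is precisely the implicit argument. The identification of the substantial codimension with $\dim V$ via Proposition \ref{intersection} is also the paper's own setup, so there is nothing to add.
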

 
 We are now able to determine how many congruence classes of 
 umbilical submanifolds of $\Hy^k\times \Sf^{n-k+1}$  with 
 a given substantial codimension are there.

\begin{theorem}\label{thm:congclasses} There exists precisely a $p$-parameter family of congruence classes of umbilical submanifolds of $(\R^{n+1}\setminus \R^{k-1}, g)=\Hy^k\times \Sf^{n-k+1}$ with substantial codimension~$p$. 
 \end{theorem}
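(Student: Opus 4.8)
The plan is to convert the congruence problem entirely into the linear algebra of the associated space-like subspaces and then count the resulting invariants. By Proposition \ref{congruence}, two substantial umbilical submanifolds of $(\R^{n+1}\setminus\R^{k-1},g)$ of the same dimension, with correspondent space-like subspaces $V,\tilde V\subset\Le^{n+3}=W_1\oplus W_2$ of dimension $p$, are congruent exactly when some $T\in O_1(n+3)$ preserving $W_1$ and $W_2$ carries $V$ to $\tilde V$. For a substantial submanifold, Proposition \ref{prop:substantial} gives $V\cap W_1=V\cap W_2=\{0\}$, so for any orthonormal basis $\{\xi_1,\dots,\xi_p\}$ of $V$ the families $\{\xi_1^T,\dots,\xi_p^T\}$ and $\{\xi_1^\perp,\dots,\xi_p^\perp\}$ of $W_1$- and $W_2$-components are both linearly independent. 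In particular the Remark following Proposition \ref{alg} shows that condition $(i)$ of that proposition is automatic, so that the complete congruence invariant of $V$ is the multiset of eigenvalues (with multiplicity) of the Gram matrix $G=(\<\xi_i^T,\xi_j^T\>)_{1\le i,j\le p}$, which is independent of the chosen orthonormal basis. Thus congruence classes of substantial umbilical submanifolds of codimension $p$ are in bijection with the eigenvalue multisets of $G$ arising from such a $V$, and the theorem reduces to describing this set of multisets.

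Next I would pin down which multisets occur. Since $\{\xi_i\}$ is orthonormal and $\xi_i=\xi_i^T+\xi_i^\perp$, the matrices $G$ and $H=(\<\xi_i^\perp,\xi_j^\perp\>)$ satisfy $G+H=I_p$; hence they commute and the eigenvalues of $G$ are $1-\mu$ as $\mu$ ranges over the eigenvalues of $H$. Because $W_2$ is space-like, $H\succeq 0$, so every eigenvalue of $G$ is $\le 1$; substantiality (linear independence of $\{\xi_i^\perp\}$, i.e.\ $V\cap W_1=\{0\}$) forces $H\succ 0$ and therefore every eigenvalue of $G$ strictly less than $1$. On the other hand $G$ is the Gram matrix of the vectors $\xi_i^T\in W_1$, and since $W_1$ is Lorentzian of signature $(1,k)$, a family of mutually orthogonal linearly independent vectors of $W_1$ can contain at most one vector of nonpositive norm; hence at most one eigenvalue of $G$ is $\le 0$. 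Summarizing, after ordering $\lambda_1\ge\cdots\ge\lambda_p$, the admissible multisets are precisely those with $\lambda_1<1$ and $\lambda_i>0$ for $i<p$ (every eigenvalue strictly positive except possibly the smallest, which may vanish or be negative), with the extra proviso that when $p=k+1=\dim W_1$ the smallest eigenvalue must be strictly negative, since $W_1$ admits at most $k$ mutually orthogonal space-like directions.

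I would then establish realizability: given any multiset satisfying these constraints and any $p\le\min\{k+1,n-k+2\}$ (the bound of Corollary \ref{substantial}), one builds $V$ directly. In $W_2$ choose mutually orthogonal space-like vectors $\xi_i^\perp$ with $\|\xi_i^\perp\|^2=1-\lambda_i>0$, possible because $\dim W_2=n-k+2\ge p$; in $W_1$ choose mutually orthogonal $\xi_i^T$ with $\<\xi_i^T,\xi_i^T\>=\lambda_i$, placing the positive values along space-like directions, a zero value (if present) along a null vector, and a negative value (if present) along the time-like direction of $W_1$. The signature count above is exactly what guarantees that these $p$ orthogonal prescribed-norm vectors fit inside $W_1$ while remaining independent. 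Setting $\xi_i=\xi_i^T+\xi_i^\perp$ yields an orthonormal family spanning a space-like $V$ with $V\cap W_1=V\cap W_2=\{0\}$ and $G=\mathrm{diag}(\lambda_1,\dots,\lambda_p)$, so $V$ is substantial of codimension $p$ with the prescribed invariant. Since distinct multisets give non-congruent submanifolds and the admissible multisets form a region with nonempty interior in $\R^p$ (the $p$ ordered eigenvalues vary independently inside the stated open constraints), the congruence classes constitute precisely a $p$-parameter family.

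The main obstacle, and the step demanding genuine care, is the Lorentzian realizability analysis in $W_1$: in contrast with the space-like factor $W_2$, where Gram determinants behave as in Euclidean geometry, in signature $(1,k)$ one must track null directions carefully—this is what forces at most one nonpositive eigenvalue, rules out a simultaneous zero and negative eigenvalue, and makes the borderline codimension $p=k+1$ require a strictly negative eigenvalue. Handling these boundary cases correctly is exactly what makes the bijection between congruence classes and admissible eigenvalue multisets sharp, and hence what pins the family down to precisely $p$ parameters.
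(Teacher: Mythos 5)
Your proposal is correct and follows essentially the same route as the paper: Proposition \ref{congruence} together with Proposition \ref{prop:substantial} reduces congruence to the criterion of Proposition \ref{alg} (with its Remark disposing of condition $(i)$), so that the eigenvalue multiset of the Gram matrix $(\<\xi_i^T,\xi_j^T\>)_{1\leq i,j\leq p}$ is a complete congruence invariant. You actually go beyond the paper's own proof, which stops at this one-to-one correspondence: your characterization of the admissible multisets (all eigenvalues $<1$, at most one $\leq 0$, exactly one negative when $p=k+1$) and the explicit realizability construction in $W_1\oplus W_2$ make the count of exactly $p$ parameters precise rather than asserted.
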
 
\proof By Proposition \ref{congruence}, two umbilical submanifolds $S$ and $\tilde{S}$ of 
 $(\R^{n+1}\setminus \R^{k-1}, g)$  are congruent in $(\R^{n+1}\setminus \R^{k-1}, g)$ if and only if  there exists $T\in O_1(n+3)$ leaving $W_1$ and $W_2$ invariant such that 
$T(V)=\tilde V$, where $V$ and $\tilde V$ are the space-like subspaces correspondent to $S$ and $\tilde{S}$, respectively.  
 
 Let $\{\xi_1, \ldots, \xi_p\}$ and  $\{\tilde{\xi}_1, \ldots, \tilde{\xi}_p\}$ be orthonormal bases of  $V$ and $\tilde V$, respectively. Since $V$ and $\tilde V$ are substantial, by Proposition \ref{prop:substantial} the subspaces spanned by  $\{\xi^T_1, \ldots, \xi^T_m\}$ and  $\{\tilde{\xi}^T_1, \ldots, \tilde{\xi}^T_m\}$ have dimension $p$.
  It follows from Proposition \ref{alg} that there exists $T\in O_1(n+3)$ leaving $W_1$ and $W_2$ invariant such that $T(V)=\tilde{V}$ if and only if the Gram matrices 
$(\<\xi^T_i, \xi^T_j\>)_{1\leq i,j\leq m}$ and  
$(\<\tilde{\xi}^T_i, \tilde{\xi}^T_j\>)_{1\leq i,j\leq m}$ have the same eingenvalues, counted with multiplicities. Therefore, the congruence classes of  umbilical submanifolds of $(\R^{n+1}\setminus \R^{k-1}, g)$ with  substantial codimension $p$ are in one-to-one correspondence with the sets of eigenvalues of such  Gram matrices. \qed

\subsection{The hypersurface case}.

The next result gives necessary and sufficient conditions for two umbilical hypersurfaces of 
$(\R^{n+1}\setminus \R^{k-1}, g)$ to be congruent in terms of their Euclidean data.

\begin{proposition}\label{umb3a} The following assertions hold:
\begin{itemize}
\item[(i)]
 The hyperspheres $\Sf^n(x_0;r)$ and $\Sf^n(\tilde{x}_0;\tilde{r})$ are congruent in $(\R^{n+1}\setminus \R^{k-1}, g)$ if and only if  ${\displaystyle \frac{\|{x}_0^\perp\|}{r}=\frac{\|\tilde{x}_0^\perp\|}{\tilde{r}}}$.
\item[(ii)] The hypersphere $\Sf^n(x_0;r)$ and the affine hyperplane $\mathcal{H}(N;c)$  are congruent in $(\R^{n+1}\setminus \R^{k-1}, g)$ if and only if  ${\displaystyle \frac{\|{x}_0^\perp\|}{r}=\|N^\perp\|}$ and $c\neq 0$ if $N^T=0$. 
\item[(iii)] Two affine hyperplanes $\mathcal{H}(N;c)$ and $\mathcal{H}(\tilde{N};\tilde{c})$ are congruent in $(\R^{n+1}\setminus \R^{k-1}, g)$ if and only if  $\|N^\perp\|=\|\tilde{N}^\perp\|$ and either $c\neq 0\neq \tilde c$ or $c=0=\tilde c$ if $N^T$ (and hence $\tilde{N}^T$) vanishes. 
\end{itemize}
\end{proposition}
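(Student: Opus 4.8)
The plan is to treat the three cases uniformly by reducing, via Proposition~\ref{congruence}, the congruence problem to a statement about the one-dimensional space-like subspaces $V=\spa\{z\}$ and $\tilde V=\spa\{\tilde z\}$ associated with the two hypersurfaces, where $z,\tilde z$ are the unit space-like vectors attached in Section~2 to a hypersphere or an affine hyperplane. By Proposition~\ref{congruence}, the hypersurfaces are congruent in $(\R^{n+1}\setminus\R^{k-1},g)$ exactly when there is $T\in O_1(n+3)$ preserving $W_1$ and $W_2$ with $T(z)=\tilde z$ (we may replace $\tilde z$ by $-\tilde z$, since $\spa\{\tilde z\}=\spa\{-\tilde z\}$). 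Writing $z=z^T+z^\perp$ and $\tilde z=\tilde z^T+\tilde z^\perp$ according to $\Le^{n+3}=W_1\oplus W_2$, and using that such a $T$ restricts to \emph{independent} isometries $T|_{W_1}\in O_1(W_1)$ and $T|_{W_2}\in O(W_2)$, the existence of $T$ is equivalent to the simultaneous solvability of $T|_{W_1}z^T=\tilde z^T$ and $T|_{W_2}z^\perp=\tilde z^\perp$.

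First I would record the elementary transitivity facts this rests on. Since $W_2$ is positive definite, $T|_{W_2}z^\perp=\tilde z^\perp$ is solvable in $O(W_2)$ precisely when $\|z^\perp\|=\|\tilde z^\perp\|$. Since $W_1$ is Lorentzian, $O_1(W_1)$ acts transitively on each nonzero level set $\{u:\langle u,u\rangle=\kappa\}$ with $\kappa\neq 0$, and on the punctured null cone, while fixing the origin; hence $T|_{W_1}z^T=\tilde z^T$ is solvable exactly when $\langle z^T,z^T\rangle=\langle\tilde z^T,\tilde z^T\rangle$ and, in addition, $z^T$ and $\tilde z^T$ vanish simultaneously (this last clause is what forbids sending a nonzero null vector to the zero vector, both having vanishing square). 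Because $z$ and $\tilde z$ are unit vectors, $\langle z^T,z^T\rangle=1-\langle z^\perp,z^\perp\rangle$, so the two equal-norm requirements coincide. The congruence criterion therefore reads: $S$ and $\tilde S$ are congruent if and only if $\langle z^\perp,z^\perp\rangle=\langle\tilde z^\perp,\tilde z^\perp\rangle$ and $z^T,\tilde z^T$ vanish simultaneously. This is the hypersurface specialization of Proposition~\ref{alg} and Theorem~\ref{thm:congclasses}, carried out directly so as to cover also the non-substantial cases.

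The remaining work is to translate these two conditions into the stated Euclidean data using the formulas for $z$ from Section~2. For a hypersphere $\Sf^n(x_0;r)$, from $z=\tfrac1r\Psi(x_0)+\tfrac r2 w$ one reads off $z^\perp=\tfrac1r Cx_0^\perp$, so $\langle z^\perp,z^\perp\rangle=\|x_0^\perp\|^2/r^2$, while $z^T=\tfrac1r v+\tfrac1r Cx_0^T+(\tfrac r2-\tfrac{\|x_0\|^2}{2r})w$ is never zero, owing to its nonzero $v$-component. For an affine hyperplane $\mathcal{H}(N;c)$, from $z=CN-cw$ one gets $z^\perp=CN^\perp$, so $\langle z^\perp,z^\perp\rangle=\|N^\perp\|^2$, and $z^T=CN^T-cw$, which vanishes exactly when $N^T=0$ and $c=0$. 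Substituting into the criterion, the norm equalities give, respectively, $\|x_0^\perp\|/r=\|\tilde x_0^\perp\|/\tilde r$, $\|x_0^\perp\|/r=\|N^\perp\|$ and $\|N^\perp\|=\|\tilde N^\perp\|$.

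Finally, the simultaneous-vanishing clause accounts for the side conditions. For two spheres~(i) both $z^T$ are automatically nonzero, so no extra condition appears. For a sphere and a hyperplane~(ii) the sphere has $z^T\neq 0$, so congruence forces $CN^T-cw\neq 0$, i.e.\ $c\neq 0$ whenever $N^T=0$. For two hyperplanes~(iii) the norm equality already forces $N^T=0\iff\tilde N^T=0$, each being equivalent to $\|N^\perp\|=1$; when $N^T\neq 0$ both $z^T$ are nonzero automatically, and when $N^T=0$ the clause becomes $c=0\iff\tilde c=0$, i.e.\ either $c\neq 0\neq\tilde c$ or $c=0=\tilde c$. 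I expect the main subtlety to be exactly this bookkeeping for $z^T$ in the Lorentzian factor $W_1$: the equal-square condition alone does not suffice, since a nonzero null $z^T$ and the zero vector share the value $\langle z^T,z^T\rangle=0$, and it is the failure of $O_1(W_1)$ to connect these two that produces the otherwise opaque conditions on $c$ and $N^T$.
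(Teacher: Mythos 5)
Your proposal is correct and follows essentially the same route as the paper: both reduce the problem via Proposition \ref{congruence} to finding $T\in O_1(n+3)$ preserving $W_1$ and $W_2$ that matches the associated unit space-like vectors, then compute their components in $\Le^{n+3}=W_1\oplus W_2$ and run the same case analysis (sphere $z^T$ never vanishes, hyperplane $z^T=CN^T-cw$ vanishes exactly when $N^T=0=c$). The only difference is cosmetic: where the paper cites Proposition \ref{alg}, you re-derive its one-dimensional case directly from the transitivity of $O(W_2)$ and $O_1(W_1)$ on level sets of the quadratic form together with the zero-versus-nonzero-null caveat, which is precisely the content of Lemma \ref{le:gram} specialized to $m=1$, and your ``equal perpendicular norms plus simultaneous vanishing of tangent parts'' criterion is equivalent to the paper's ``both nonzero light-like or neither light-like'' disjunction.
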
 
\proof $(i)$ Let  ${\displaystyle \zeta=\frac{1}{r}\Psi(x_0)+\frac{r}{2}w}$ and ${\displaystyle \tilde\zeta=\frac{1}{\tilde{r}}\Psi(\tilde{x}_0)+\frac{\tilde{r}}{2}w}$ be the unit space-like vectors associated with $\Sf^n(x_0;r)$ and $\Sf^n(\tilde{x}_0;\tilde{r})$, respectively. Write
$x_0=x_0^T+x_0^\perp$, with  $x_0^T\in \R^{k-1}$ and  $x_0^\perp\in \R^{n-k+2}=(\R^{k-1})^\perp$, and decompose $\tilde{x}_0$ accordingly. Then $\zeta=\zeta^T+\zeta^\perp$, with 
$$\zeta^T=\frac{1}{r}v +\frac{1}{2}
\left(r-\frac{\|x_0\|^2}{r}\right)w+\frac{1}{r}Cx_0^T\,\,\,\mbox{and}\,\,\,\zeta^\perp=\frac{1}{r}Cx_0^\perp,$$ is the decomposition of $\zeta$ with respect to the orthogonal decomposition $\Le^{n+3}=W_1\oplus W_2$, with $W_1=V_1\oplus\spa\{v,w\}$ and $W_2=V_2$. A similar decomposition holds for $\tilde{\zeta}$.  

By Proposition \ref{congruence}, $\Sf^n(x_0;r)$ and $\Sf^n(\tilde{x}_0;\tilde{r})$ are congruent in $(\R^{n+1}\setminus \R^{k-1}, g)$ if and only if  there exists $T\in O_1(n+3)$ leaving $W_1$ and $W_2$ invariant such that $T(\zeta)=\tilde \zeta$. It follows from Proposition \ref{alg} that this is the case if and only if either  $\zeta^T$ and $\tilde{\zeta}^T$ are nonzero light-like vectors  or if  neither of them is light-like and $\<\zeta^T,\zeta^T\>=\<\tilde{\zeta}^T, \tilde{\zeta}^T\>$ (or equivalently, $\<\zeta^\perp,\zeta^\perp\>=\<\tilde{\zeta}^\perp, \tilde{\zeta}^\perp\>$), that is, if and only if ${\displaystyle \frac{\|{x}_0^\perp\|}{r}=\frac{\|\tilde{x}_0^\perp\|}{\tilde{r}}}$.

\noindent $(ii)$ Let $\eta=CN-cw$ be the unit space-like vector representing $\mathcal{H}(N;c)$. By Proposition~\ref{congruence}, $\Sf^n(x_0;r)$ and $\mathcal{H}(N;c)$ are congruent in $(\R^{n+1}\setminus \R^{k-1}, g)$ if and only if  there exists $T\in O_1(n+3)$ leaving $W_1$ and $W_2$ invariant such that $T(\zeta)=\eta$. It follows from Proposition \ref{alg} that this is the case if and only if either $\zeta^T$ and $\eta^T$ are nonzero light-like vectors or neither of them is light-like, and $\<\zeta^\perp,\zeta^\perp\>=\<\eta^\perp, \eta^\perp\>$. This is the case  if and only if ${\displaystyle \frac{\|{x}_0^\perp\|}{r}=\|N^\perp\|}$ and $c\neq 0$ if $N^T=0$.\vspace{1ex}

\noindent $(iii)$ Follows in a similar way as in $(i)$ and $(ii)$. \qed

\begin{corollary}\label{cor:hypcase} There exists precisely a one-parameter family of umbilical non totally geodesic hypersurfaces of $(\R^{n+1}\setminus \R^{k-1}, g)=\Hy^k\times \Sf^{n-k+1}$. More precisely,
for a fixed $x_0\in \R^{n+1}$ with $x_0^\perp\neq 0$, the set $\{\Sf^n(x_0;r), r>0\}$ contains exactly one representative of each congruence class of such hypersurfaces.
\end{corollary}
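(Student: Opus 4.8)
The plan is to derive the statement directly from the congruence invariant furnished by Proposition~\ref{umb3a}, together with the identification of the totally geodesic hypersurfaces provided by Proposition~\ref{prop:totgeo}. First I would recall, via Proposition~\ref{intersection} and the discussion in Section~$2$, that every umbilical hypersurface of $(\R^{n+1}\setminus\R^{k-1},g)$ is (an open subset of) either a Euclidean hypersphere $\Sf^n(\tilde x_0;\tilde r)$ or an affine hyperplane $\mathcal{H}(\tilde N;\tilde c)$. By Proposition~\ref{umb3a}, the congruence class of such a hypersurface is completely determined by the single nonnegative number $t:=\|\tilde x_0^\perp\|/\tilde r$ in the spherical case and $t:=\|\tilde N^\perp\|$ in the hyperplane case, the two cases being congruent exactly when these numbers coincide (away from the degenerate condition on $\tilde c$). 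Thus the congruence classes are indexed by a single parameter $t\geq 0$, consistently with the case $p=1$ of Theorem~\ref{thm:congclasses}, bearing in mind that a hypersurface has substantial codimension one precisely when it is non totally geodesic, since an $n$-dimensional submanifold contained in a totally geodesic hypersurface must be an open subset of it.

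Next I would pin down the value $t=0$ as the totally geodesic locus. A sphere $\Sf^n(\tilde x_0;\tilde r)$ has $t=0$ iff $\tilde x_0^\perp=0$, i.e. $\tilde x_0\in\R^{k-1}$, and a hyperplane $\mathcal{H}(\tilde N;\tilde c)$ has $t=0$ iff $\tilde N\in\R^{k-1}$; by the remark following Proposition~\ref{prop:totgeo} these are exactly the totally geodesic hypersurfaces $\Hy^{k-1}\times\Sf^{n-k+1}$. Hence a hypersurface is non totally geodesic if and only if $t>0$, and the non totally geodesic congruence classes are in bijection with $t\in(0,\infty)$. I would then fix $x_0$ with $a:=\|x_0^\perp\|>0$: the sphere $\Sf^n(x_0;r)$ has invariant $t=a/r$, and $r\mapsto a/r$ is a bijection of $(0,\infty)$ onto $(0,\infty)$. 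Consequently the members of $\{\Sf^n(x_0;r):r>0\}$ are pairwise non-congruent and realize every positive value of $t$; given an arbitrary non totally geodesic umbilical hypersurface with invariant $t>0$, its representative in the family is $\Sf^n(x_0;a/t)$, the congruence being supplied by Proposition~\ref{umb3a}(i) if the given hypersurface is a sphere and by Proposition~\ref{umb3a}(ii) if it is a hyperplane.

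The only point requiring care is the boundary value $t=1$, which hyperplanes attain exactly when $\tilde N^T=0$. Here Proposition~\ref{umb3a}(ii) demands $\tilde c\neq 0$ for congruence with a sphere; but a hyperplane with $\tilde N^T=0$ and $\tilde c=0$ passes through the origin, hence through $\R^{k-1}$, and is therefore totally geodesic, so it is automatically excluded once we restrict to $t>0$. This degenerate case is the sole place where the reasoning is not purely formal; everything else is the routine bijection $r\mapsto a/r$ applied to the complete invariant of Proposition~\ref{umb3a}. I therefore expect no genuine obstacle, the substance of the result being entirely contained in the preceding propositions.
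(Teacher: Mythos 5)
Your proposal is correct and follows essentially the same route as the paper: both rest entirely on Proposition~\ref{umb3a}, using the invariant $\|x_0^\perp\|/r$ (resp.\ $\|N^\perp\|$) and the bijection $r\mapsto \|x_0^\perp\|/r$ to show the spheres $\Sf^n(x_0;r)$, $r>0$, are pairwise non-congruent and absorb every sphere and every admissible hyperplane. One slip should be fixed, though it does not break the argument: your second paragraph asserts that a hypersurface is totally geodesic if and only if $t=0$, which is false --- the hyperplanes $\mathcal{H}(\tilde N;0)$ with $\tilde N^T=0$ have $t=\|\tilde N^\perp\|=1$ yet are totally geodesic (they are the slices $\Hy^k\times\Sf^{n-k}$, i.e.\ type (i) of Proposition~\ref{prop:totgeo} with $l=k$), so ``non totally geodesic congruence classes are in bijection with $t\in(0,\infty)$'' is true but not for the reason stated there. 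Your final paragraph in fact recognizes exactly this case and disposes of it correctly; only the closing phrase ``automatically excluded once we restrict to $t>0$'' should read ``automatically excluded once we restrict to non totally geodesic hypersurfaces,'' since this case has $t=1>0$, and the inference ``passes through the origin, hence through $\R^{k-1}$'' should be replaced by the direct observation that $\tilde N^T=0$ and $\tilde c=0$ force the hyperplane to be $\R^{k-1}\oplus\R^{n-k+1}$, which is totally geodesic by Proposition~\ref{prop:totgeo}.
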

\proof By part $(i)$ of Proposition \ref{umb3a}, $\Sf^n(x_0;r)$ is not congruent to $\Sf^n({x}_0;\tilde{r})$ if $r\neq \tilde r$. On the other hand,  any $\Sf^n(\tilde{x}_0;\tilde{r})$ with $\tilde{x}_0^\perp\neq 0$ is congruent to $\Sf^n(x_0;r)$  for ${\displaystyle r=\frac{\tilde{r}\|x_0^\perp\|}{\|\tilde{x}_0^\perp\|}}$, and any $\mathcal{H}(N;c)$ with $N^\perp\neq 0$ and with $c\neq 0$ if $N^T=0$ is congruent to $\Sf^n(x_0;r)$  for ${\displaystyle r=\frac{\|x_0^\perp\|}{\|N^\perp\|}}$. \qed

\subsection{The codimension two case}

In this section we solve the congruence problem for umbilical submanifolds with codimension two of  $(\R^{n+1}\setminus \R^{k-1}, g)$. In particular, this implies a classification of (the congruence classes of) all umbilical  submanifolds  of $\Sf^n\times \R$ and $\Hy^n\times \R$.

Notice that any sphere of $\R^{n+1}$ with codimension two can be produced (in a unique way) as the orthogonal intersection of a hypersphere $\Sf^n(x_0;r)$ and an affine hyperplane $\mathcal{H}(N;c)$. Also, any affine subspace  of $\R^{n+1}$ with codimension two whose distance to the origin is $c>0$ can be produced (in a unique way) as the orthogonal intersection of an affine hyperplane $\mathcal{H}(N_1;c)$ and a hyperplane 
$\mathcal{H}(N_2;0)$. For a linear subspace with codimension $2$, however, the representation $\mathcal{H}(N_1,0)\cap \mathcal{H}(N_2,0)$  is not unique. Notice also that,  if in the latter representation the vectors  $N_1^T$ and $N_2^T$ are linearly dependent, then one can replace it by another such representation for which $N_2^T=0$.

\begin{proposition}\label{umb1} 
$(a)$ The spheres $\Sf^n(x_0;r)\cap \mathcal{H}(N;c)$ and $\Sf^n(\tilde{x}_0;\tilde{r})\cap \mathcal{H}(\tilde{N};\tilde{c})$ are congruent in $(\R^{n+1}\setminus \R^{k-1}, g)$ if and only if one of the following holds:
\begin{itemize}

\item[(i)] $(N^T,c)=(0,0)=(\tilde{N}^T,\tilde{c})$ and $r^{-1}\|x_0^\perp\|=\tilde{r}^{-1}\|\tilde{x}_0^\perp\|$; or
\item[(ii)] $(N^T,c)\neq(0,0)\neq(\tilde{N}^T,\tilde{c})$ and
\begin{equation}\label{eq:congcond} \left\{\begin{array}{l}
{\displaystyle \frac{\|x_0^\perp\|^2}{r^2}+\|N^\perp\|^2= \frac{\|\tilde{x}_0^\perp\|^2}{\tilde{r}^2}+\|\tilde{N}^\perp\|^2}\vspace{1ex}\\
{\displaystyle \frac{1}{r^2}(\|x_0^\perp\|^2\|N^\perp\|^2-\<x_0^\perp, N^\perp\>^2)=\frac{1}{\tilde{r}^2}(\|\tilde{x}_0^\perp\|^2\|\tilde{N}^\perp\|^2-\<\tilde{x}_0^\perp, \tilde{N}^\perp\>^2}).\end{array} \right. 
\end{equation}

\end{itemize}

$(b)$ The sphere $\Sf^n(x_0;r)\cap \mathcal{H}(N;c)$ and the affine subspace $\mathcal{H}(N_1;c_1)\cap\mathcal{H}(N_2;0)$ are congruent in $(\R^{n+1}\setminus \R^{k-1}, g)$ if and only if 
one of the following holds:
\begin{itemize}
    \item[(i)] $(N^T,c)=(0,0)$, $N_2^T=0$,  $r^{-1}\|x_0^\perp\|=\|N_1^\perp\|$ and $c_1\neq 0$ if $N_1^T=0$; or
    \item[(ii)] $(N^T,c)\neq(0,0)$, $N_2^T\neq 0$, with $\{N_1^T,N_2^T\}$ linearly independent if $c_1=0$, and
    \begin{equation}\label{eq:congcondb} \left\{\begin{array}{l}
{\displaystyle \frac{\|x_0^\perp\|^2}{r^2}+\|N^\perp\|^2= \|N^\perp_1\|^2+\|N^\perp_2\|^2}\vspace{1ex}\\
{\displaystyle \|x_0^\perp\|^2\|N^\perp\|^2-\<x_0^\perp, N^\perp\>^2=
r^2(\|N^\perp_1\|^2\|N^\perp_2\|^2-\<N^\perp_1,N^\perp_2\>^2).}\end{array} \right.
\end{equation}
\end{itemize} 

$(c)$ The affine subspaces $\mathcal{H}(N_1;c)\cap\mathcal{H}(N_2;0)$ and $\mathcal{H}(\tilde{N}_1;\tilde c)\cap\mathcal{H}(\tilde{N}_2;0)$ are congruent in $(\R^{n+1}\setminus \R^{k-1}, g)$ if and only if 
one of the following holds:
\begin{itemize}
    \item[(i)] $N_2^T=0=\tilde{N}_2^T$, $\|N_1^\perp\|=\|\tilde{N}_1^\perp\|$ and either $c\neq 0\neq \tilde c$ or $c=0=\tilde c$ if $N_1^T$ (and hence $\tilde{N}_1^T$) vanishes; or
    \item[(ii)] $N_2^T\neq0\neq\tilde{N}_2^T$, with $\{N_1^T,N_2^T\}$ linearly independent if $c=0$ and $\{\tilde{N}_1^T,\tilde{N}_2^T\}$ linearly independent if $\tilde{c}=0$, and
    \begin{equation}\label{eq:congcondc} \left\{\begin{array}{l}
{\displaystyle \|N^\perp_1\|^2+\|N^\perp_2\|^2= \|\tilde{N}^\perp_1\|^2+\|\tilde{N}^\perp_2\|^2}\vspace{1ex}\\
{\displaystyle \|N^\perp_1\|^2\|N^\perp_2\|^2-\<N^\perp_1,N^\perp_2\>^2=
\|\tilde{N}^\perp_1\|^2\|\tilde{N}^\perp_2\|^2-\<\tilde{N}^\perp_1,\tilde{N}^\perp_2\>^2.}\end{array} \right.
\end{equation}
\end{itemize}
\end{proposition}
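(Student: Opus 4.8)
The plan is to reduce everything to the algebraic criterion of Proposition~\ref{alg} and to compute the two relevant invariants---namely the eigenvalues of the Gram matrix of the $W_1$-projections (equivalently, of the $W_2$-projections)---for each of the space-like subspaces involved. Since all three parts of the Proposition concern codimension-two submanifolds, each corresponding space-like subspace $V$ is two-dimensional, spanned by the unit space-like vectors $z_1, z_2$ representing the two hyperspheres or affine hyperplanes whose orthogonal intersection produces the submanifold. By the orthogonality of the intersection we have $\<z_1, z_2\>=0$ (as recalled in Section~2), so $\{z_1, z_2\}$ is already an orthonormal basis of $V$. Thus for each case I would simply write down $z_1, z_2$ explicitly in terms of the Euclidean data using the formulas from Section~2 (the vector $\frac{1}{r}\Psi(x_0)+\frac{r}{2}w$ for a hypersphere, and $CN-cw$ for an affine hyperplane), decompose each according to $\Le^{n+3}=W_1\oplus W_2$, and assemble the $2\times 2$ Gram matrix $G=(\<z_i^\perp, z_j^\perp\>)_{1\le i,j\le 2}$ of the $W_2$-components.

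First I would handle the substantiality and the degenerate-versus-nondegenerate dichotomy, since Proposition~\ref{alg}(i) requires matching these before comparing eigenvalues. For a hypersphere $\Sf^n(x_0;r)$ one computes $z^\perp=\frac{1}{r}Cx_0^\perp$, so $\|z^\perp\|^2=\|x_0^\perp\|^2/r^2$; for an affine hyperplane $\mathcal{H}(N;c)$ one gets $z^\perp=CN^\perp$, so $\|z^\perp\|^2=\|N^\perp\|^2$. The condition $(N^T,c)=(0,0)$ in each case is exactly the condition that the corresponding vector $z$ lies in $W_2$ (i.e.\ $z^T=0$), which is the degenerate/non-substantial borderline; this is why the statements split into a case $(i)$ where one of the hyperplanes is of this special type and a case $(ii)$ where neither is. I would verify that in case $(i)$ the subspace $V$ effectively reduces to being spanned by a single vector transversal to $W_2$, so the relevant invariant is just the single quantity $\|z^\perp\|^2$, yielding the simpler congruence conditions $r^{-1}\|x_0^\perp\|=\tilde r^{-1}\|\tilde x_0^\perp\|$ and the analogous equalities.

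For the main case $(ii)$ in each part, two $2\times 2$ symmetric matrices have the same eigenvalues (with multiplicity) if and only if they have the same trace and the same determinant. The trace of $G$ is $\|z_1^\perp\|^2+\|z_2^\perp\|^2$ and its determinant is $\|z_1^\perp\|^2\|z_2^\perp\|^2-\<z_1^\perp, z_2^\perp\>^2$. Substituting the explicit values---$\|z_1^\perp\|^2=\|x_0^\perp\|^2/r^2$, $\|z_2^\perp\|^2=\|N^\perp\|^2$, and the cross term $\<z_1^\perp, z_2^\perp\>=\frac{1}{r}\<Cx_0^\perp, CN^\perp\>=\frac{1}{r}\<x_0^\perp, N^\perp\>$ in part~$(a)$---reproduces exactly the trace equation and the determinant equation displayed in~(\ref{eq:congcond}). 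Parts $(b)$ and $(c)$ are identical in spirit, with the hypersphere data replaced by the second affine-hyperplane data where appropriate; the factor $r^2$ appearing on the right-hand side of the determinant equation in~(\ref{eq:congcondb}) arises because there one of the two representing vectors is a hypersphere vector (carrying $1/r$) on the left and both are hyperplane vectors on the right. The congruence then follows directly from Proposition~\ref{alg}, invoking Proposition~\ref{congruence} to pass from ``$T(V)=\tilde V$'' to actual congruence of the submanifolds.

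The main obstacle, and the point requiring the most care, is the bookkeeping in case~$(ii)$ concerning condition~(i) of Proposition~\ref{alg}: I must confirm that whenever neither distinguished vector is of the special type, the subspace spanned by the $W_1$-projections $\{z_1^T, z_2^T\}$ has full dimension two and is nondegenerate, so that the eigenvalue comparison is legitimate and not obstructed by a dimension drop. This is where the linear-independence provisos in the statements (``$\{N_1^T, N_2^T\}$ linearly independent if $c_1=0$'', and likewise for the tilded data) enter: they are precisely the conditions ensuring that the representation of the submanifold as an orthogonal intersection is genuine and that $z_1^T, z_2^T$ remain independent, ruling out the case where $V\cap W_2$ becomes nontrivial and $V$ fails to be substantial by Proposition~\ref{prop:substantial}. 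Once these non-degeneracy conditions are checked case by case---together with the observation that the orthogonal-intersection representations are unique in the sphere and positive-distance affine cases, and suitably normalizable via $N_2^T=0$ in the linear-subspace case as noted before the Proposition---the equivalences reduce to the trace and determinant identities and the proof is complete.
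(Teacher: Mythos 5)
Your strategy coincides with the paper's own proof: represent each codimension-two submanifold by the orthonormal pair of unit space-like vectors coming from its orthogonal-intersection representation, use Proposition~\ref{congruence} to translate congruence into the existence of $T\in O_1(n+3)$ preserving $W_1$ and $W_2$ with $T(V)=\tilde V$, split off the case $(N^T,c)=(0,0)$ (equivalently $z_2^T=0$, so the $W_1$-projections span only a line), and in the generic case apply Proposition~\ref{alg} together with the trace--determinant characterization of equality of eigenvalues of $2\times 2$ symmetric matrices. Your computations $\|z_1^\perp\|^2=\|x_0^\perp\|^2/r^2$, $\|z_2^\perp\|^2=\|N^\perp\|^2$ and $\<z_1^\perp,z_2^\perp\>=r^{-1}\<x_0^\perp,N^\perp\>$ are exactly those of the paper and do yield \eqref{eq:congcond}, \eqref{eq:congcondb} and \eqref{eq:congcondc}, and your identification of the linear-independence provisos with nontriviality of $V\cap W_2$ (hence with Proposition~\ref{prop:substantial}) is also correct.

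There is, however, one step you announce that would fail if carried out literally: you say you must confirm that in case (ii) the span of the $W_1$-projections $\{z_1^T,z_2^T\}$ is two-dimensional \emph{and nondegenerate}. The dimension claim is correct and needed, but nondegeneracy is false in general. For instance, in part (a) take $r=1$, $x_0=x_0^\perp$ with $\|x_0\|=1$, $c=0$, and $N$ with $N^T\neq 0$ and $\<N^\perp,x_0^\perp\>=0$ (so the intersection is orthogonal); then $z_1^T=v$ is light-like and orthogonal to $z_2^T=CN^T$, so $\spa\{z_1^T,z_2^T\}$ is two-dimensional and degenerate --- these are exactly the ``parabolic'' configurations with $r=1$ that reappear in Section~5. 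This does not wreck the argument, because condition (i) of Proposition~\ref{alg} does not demand nondegeneracy: it asks that the two spans be \emph{both} degenerate of dimension $m$ or \emph{both} nondegenerate, and by the Remark following Proposition~\ref{alg} this is automatic once the projections are linearly independent and the Gram matrices have equal eigenvalues, since degeneracy of the span of a linearly independent family is detected by the vanishing of the Gram determinant, and equal eigenvalues force the two determinants to vanish simultaneously. So replace the nondegeneracy verification by the dimension count you already perform, plus an appeal to that Remark; with this correction your proof is complete and agrees with the paper's.
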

\proof The spheres $\Sf^n(x_0;r)\cap \mathcal{H}(N;c)$ and $\Sf^n(\tilde{x}_0;\tilde{r})\cap \mathcal{H}(\tilde{N};\tilde{c})$ are represented by the two-dimensional space-like subspaces $V=\spa\{\xi_1, \xi_2\}$ and $\tilde{V}=\spa\{\tilde{\xi}_1, \tilde{\xi}_2\}$, where 
$$
\xi_1=\frac{1}{r}\Psi(x_0)+\frac{r}{2}w=\frac{1}{r}v+\frac{1}{2}
\left(r-\frac{\|x_0\|^2}{r}\right)w+\frac{1}{r}Cx_0,\;\;\;\;\xi_2=-cw+CN,
$$
and $\tilde{\xi}_1$,  $\tilde{\xi}_2$ are given in a similar way. 
Thus
$$\xi_1^T=\frac{1}{r}v +\frac{1}{2}
\left(r-\frac{\|x_0\|^2}{r}\right)w+\frac{1}{r}Cx_0^T,\,\,\,\,\,\,\,\,\xi_1^\perp=\frac{1}{r}Cx_0^\perp$$
and
$$\xi_2^T=-cw+CN^T,\,\,\,\,\,\,\,\,\,\xi_2^\perp =CN^\perp$$
are the decompositions of $\xi_1$ and $\xi_2$, respectively,  with respect to the orthogonal decomposition $\Le^{n+3}=W_1\oplus W_2$, with similar decompositions for $\tilde{\xi}_1$ and $\tilde{\xi}_2$. The orthogonality condition between $\Sf^n(x_0;r)$ and $\mathcal{H}(N;c)$ (respectively,
 $\Sf^n(\tilde{x}_0;\tilde{r})$ and $\mathcal{H}(\tilde{N};\tilde{c})$) is that $\<N, x_0\>=c$ (respectively, $\<\tilde{N}, \tilde{x}_0\>=\tilde{c}$).
 
 By Proposition \ref{congruence}, the spheres $\Sf^n(x_0;r)\cap \mathcal{H}(N;c)$ and $\Sf^n(\tilde{x}_0;\tilde{r})\cap \mathcal{H}(\tilde{N};\tilde{c})$ are congruent in $(\R^{n+1}\setminus \R^{k-1}, g)$ if and only if  there exists $T\in O_1(n+3)$ leaving $W_1$ and $W_2$ invariant such that $T(V)=\tilde V$.  By Proposition \ref{alg}, if $\dim\spa\{\xi_1^T,\xi_2^T\}=\spa\{\tilde{\xi}_1^T,\tilde{\xi}_2^T\}=2$, this is the case 
if and only if the Gram matrices $(\<\xi^T_i, \xi^T_j\>)_{1\leq i,j\leq 2}$ and  $(\<\tilde{\xi}^T_i, \tilde{\xi}^T_j\>)_{1\leq i,j\leq 2}$ (or equivalently, the Gram matrices $(\<\xi^\perp_i, \xi^\perp_j\>)_{1\leq i,j\leq 2}$ and  $(\<\tilde{\xi}^\perp_i, \tilde{\xi}^\perp_j\>)_{1\leq i,j\leq 2})$ have the same eigenvalues, counted with multiplicities.
Since these Gram matrices have order two, they have the same eigenvalues, counted with multiplicities, if and only if they have equal traces and determinants.
The latter conditions are equivalent to (\ref{eq:congcond}). 
Notice that  $\dim\spa\{\xi_1^T,\xi_2^T\}=1$ if and only if  $(N^T,c)=(0,0)$. If this is the case, then there exists $T\in O_1(n+3)$ leaving $W_1$ and $W_2$ invariant such that $T(V)=\tilde V$ if and only if also $(\tilde{N}^T,\tilde c)=(0,0)$ and there exists $T_1\in O_1(k+1)$ such that $T_1\xi_1^T=\tilde{\xi}_1^T$. The latter holds if and only if $\<\xi_1^T,\xi_1^T\>=\<\tilde{\xi}_1^T,\tilde{\xi}_1^T\>$, that is, if and only if $\displaystyle{\frac{x_0^\perp}{r}=\frac{\tilde{x}_0^\perp}{\tilde r}}$ The proofs of $(ii)$ and $(iii)$ are similar. \qed

\begin{corollary}\label{umb3} $a)$ The spheres $\Sf^n(x_0;r)\cap \mathcal{H}(N;c)$ and $\Sf^n(\tilde{x}_0;\tilde{r})\cap \mathcal{H}(\tilde{N};\tilde{c})$ are congruent in $(\R^{n+1}\setminus \{0\}, g)$ if and only if ${\displaystyle \frac{\|\tilde{x}_0\|}{\tilde r}= \frac{\|{x}_0\|}{r}\;\;\;\mbox{and}\;\;\;\frac{\tilde{c}^2}{\tilde{r}^2}=\frac{c^2}{r^2}.}$\vspace{1ex}\\
$b)$ The sphere $\Sf^n(x_0;r)\cap \mathcal{H}(N;c)$ and the affine subspace $\mathcal{H}(N_1;c_1)\cap\mathcal{H}(N_2;0)$ are congruent in $(\R^{n+1}\setminus \{0\}, g)$  if and only if $c=0$, $\|x_0\|=r$ and $c_1\neq 0$.\vspace{1ex}\\
$c)$ Two affine subspaces $\mathcal{H}(N_1;c)\cap\mathcal{H}(N_2;0)$ and $\mathcal{H}(\tilde{N}_1;\tilde c)\cap\mathcal{H}(\tilde{N}_2;0)$ are congruent in $(\R^{n+1}\setminus \{0\}, g)$  if and only if either $c=0=\tilde c$ or $c\neq 0 \neq \tilde c$.
\end{corollary}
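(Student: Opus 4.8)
The plan is to obtain the corollary as the specialization of Proposition \ref{umb1} to the case $k=1$, in which $\R^{k-1}=\{0\}$. The first step is to record the simplifications forced by this choice. Since the orthogonal decomposition $\R^{n+1}=\R^{k-1}\oplus\R^{n-k+2}$ degenerates to $\R^{n+1}=\{0\}\oplus\R^{n+1}$, every vector satisfies $x^T=0$ and $x^\perp=x$. In particular $\|x_0^\perp\|=\|x_0\|$ for the center of a hypersphere, while for any unit normal $N$ one has $N^T=0$ and $\|N^\perp\|=\|N\|=1$, and the orthogonality relation $\<N,x_0\>=c$ becomes $\<x_0^\perp,N^\perp\>=c$.

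For part $(a)$, I would substitute these identities into the two alternatives of Proposition \ref{umb1}$(a)$. Because $N^T=\tilde N^T=0$, the condition $(N^T,c)=(0,0)$ collapses to $c=0$. In alternative $(i)$ this gives $c=0=\tilde c$ together with $\|x_0\|/r=\|\tilde x_0\|/\tilde r$, which makes both displayed equations of the corollary hold (the second trivially). In alternative $(ii)$, cancelling $\|N^\perp\|^2=1=\|\tilde N^\perp\|^2$ from the first equation of (\ref{eq:congcond}) yields $\|x_0\|/r=\|\tilde x_0\|/\tilde r$, and the second equation, after inserting $\<x_0^\perp,N^\perp\>=c$ and using the first, reduces to $c^2/r^2=\tilde c^2/\tilde r^2$. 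Merging the two alternatives gives the stated equivalence, the two conditions of the corollary automatically enforcing $c=0\Leftrightarrow\tilde c=0$.

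For parts $(b)$ and $(c)$ the decisive observation is that $N_2^T=0$ (and $\tilde N_2^T=0$) hold automatically when $k=1$, so alternative $(ii)$ of Proposition \ref{umb1}$(b)$ and $(c)$---which requires $N_2^T\neq 0$---is vacuous, and only alternative $(i)$ survives. In part $(b)$, alternative $(i)$ reads $c=0$, together with $\|x_0\|/r=\|N_1\|=1$ and $c_1\neq 0$ (the latter because $N_1^T=0$), giving exactly $c=0$, $\|x_0\|=r$, $c_1\neq 0$. In part $(c)$, the equality $\|N_1^\perp\|=\|\tilde N_1^\perp\|$ holds automatically since both sides equal $1$, so alternative $(i)$ reduces to the dichotomy $c\neq 0\neq\tilde c$ or $c=0=\tilde c$.

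There is no genuine obstacle here: the corollary is a direct specialization and the argument is pure bookkeeping. The only points requiring care are noticing that the second branch of Proposition \ref{umb1}$(b)$, $(c)$ is empty in this regime because $N_2^T\equiv 0$, and carrying out the elementary simplifications of the quadratic conditions (\ref{eq:congcond})--(\ref{eq:congcondc}) correctly once all tangential components are set to zero.
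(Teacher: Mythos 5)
Your proposal is correct and is exactly the argument the paper intends: Corollary \ref{umb3} is stated without a separate proof precisely because it is the specialization of Proposition \ref{umb1} to $k=1$, where $\R^{k-1}=\{0\}$ forces $x^T=0$, $x^\perp=x$, and $\|N^\perp\|=1$ for unit normals. Your bookkeeping — collapsing $(N^T,c)=(0,0)$ to $c=0$, noting the $N_2^T\neq 0$ branches are vacuous, and reducing \eqref{eq:congcond} via $\<x_0^\perp,N^\perp\>=c$ — matches the intended derivation.
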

 
The preceding corollary allows us to recover the classification in \cite{mt1} of the umbilical submanifolds of  $\mathbb{S}^n\times \mathbb{R}$.

\begin{corollary} \label{cor:cod2snr}
For any $x_0\in \R^{n+1}\setminus \{0\}$ and any unit vector $N\in \R^{n+1}$ with $\<x_0, N\>=0$, the two-parameter family
$\{ \Sf^n(x_c;r)\cap \mathcal{H}(N;c),\;x_c=x_0+c N, c\geq 0, r>0\}$, together with the one-parameter family  $\{ \Sf^n(x_0;r)\cap \mathcal{H}(x_0/\|x_0\|;\|x_0\|),\; r>0\}$, contain precisely one representative of each congruence class of umbilical non totally geodesic submanifolds of codimension two of $(\R^{n+1}\setminus \{0\}, g)=\Sf^n\times \R$. The sphere $\Sf^n(x_c;r)\cap \mathcal{H}(N;c)$ has substantial codimension two if and only if  $c>0$.
\end{corollary}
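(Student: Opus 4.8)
The plan is to specialize the machinery of the previous subsections to $k=1$, where $\R^{k-1}=\{0\}$, so that $W_1=\spa\{v,w\}$ is a Lorentzian plane, $W_2=C(\R^{n+1})$ is space-like, and $x^\perp=x$ for every $x\in\R^{n+1}$. For a codimension-two sphere $\Sf^n(x_0;r)\cap\mathcal{H}(N;c)$, with $\|N\|=1$ and orthogonality $\langle N,x_0\rangle=c$, the only Euclidean invariants surviving in Corollary \ref{umb3} are the pair $a:=\|x_0\|/r$ and $b:=|c|/r$, and part $(a)$ of that corollary then reads: two such spheres are congruent in $(\R^{n+1}\setminus\{0\},g)$ if and only if they share the pair $(a,b)$. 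Applying Cauchy--Schwarz to $c=\langle N,x_0\rangle$ I would record the constraint $0\le b\le a$, which delimits the admissible invariants.

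I would next compute $(a,b)$ for each family. For $\Sf^n(x_c;r)\cap\mathcal{H}(N;c)$ with $x_c=x_0+cN$, orthogonality holds automatically since $\langle N,x_0\rangle=0$, and $\|x_c\|^2=\|x_0\|^2+c^2$ gives $a^2-b^2=\|x_0\|^2/r^2>0$; letting $(c,r)$ run over $c\ge 0$, $r>0$ I would verify that $(c,r)\mapsto(a,b)$ is a bijection onto the region $\{a>b\ge 0\}$, with inverse $r=\|x_0\|/\sqrt{a^2-b^2}$, $c=br$. For $\Sf^n(x_0;r)\cap\mathcal{H}(x_0/\|x_0\|;\|x_0\|)$ I would get $a=b=\|x_0\|/r$, so $r\mapsto(a,b)$ is a bijection onto the ray $\{a=b>0\}$. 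Since these two regions are disjoint and each parametrization is injective, part $(a)$ of Corollary \ref{umb3} immediately yields that no two distinct members of the union of the two families are congruent.

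To see that every congruence class of a non totally geodesic codimension-two umbilical submanifold is represented, I would first reduce to sphere-type representatives: by parts $(b)$ and $(c)$ of Corollary \ref{umb3}, an affine-subspace representative $\mathcal{H}(N_1;c_1)\cap\mathcal{H}(N_2;0)$ with $c_1\neq 0$ is congruent to the sphere with $(a,b)=(1,0)$, whereas one with $c_1=0$ is a linear subspace contained in $W_2$, hence totally geodesic and thus excluded. For a sphere-type representative, a direct computation shows that $V$ splits as $V_1\oplus V_2$ (so that $S$ is totally geodesic by Corollary \ref{cor:totgeosub}) precisely when $x_0=0$, equivalently $(a,b)=(0,0)$; so in the non totally geodesic case $x_0\neq 0$, whence $a>0$ and $(a,b)\in\{a\ge b\ge 0\}\setminus\{(0,0)\}$. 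This set is exactly the union $\{a>b\ge 0\}\cup\{a=b>0\}$ of the two swept regions, and solving for the parameters as above exhibits the required representative.

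It remains to prove the substantiality statement. By Proposition \ref{prop:substantial}, $\Sf^n(x_c;r)\cap\mathcal{H}(N;c)$ is substantial if and only if its space-like plane $V$ meets both $W_1$ and $W_2$ only in $0$. Decomposing a basis $\{\xi_1,\xi_2\}$ of $V$ along $\Le^{n+3}=W_1\oplus W_2$, I would find that $\{\xi_1^\perp,\xi_2^\perp\}$ is independent (so $V\cap W_1=0$) because $x_c=x_0+cN$ is a multiple of $N$ only when $x_0=0$, while $\{\xi_1^T,\xi_2^T\}$ is independent (so $V\cap W_2=0$) if and only if $c\neq 0$; hence substantial codimension two holds if and only if $c>0$. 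The main difficulty I anticipate is not any single computation but the bookkeeping needed to confirm that the two families are simultaneously exhaustive and irredundant: one must match each congruence class---including the degenerate boundary cases $b=0$ and $a=b$ and the affine-subspace representatives---to a unique parameter value, and verify that the totally geodesic locus is exactly the part that has been removed.
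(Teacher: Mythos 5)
Your proposal is correct and takes essentially the same route as the paper's own proof: both reduce congruence to the two Euclidean invariants $\|x_0\|/r$ and $|c|/r$ supplied by Corollary \ref{umb3}, use Cauchy--Schwarz to delimit the admissible values, and solve explicitly for the parameters $(c,r)$, your $(a,b)$-region bookkeeping being just a tidier packaging of the paper's ratio manipulations (the paper's $\lambda$ is your $a/b$). As a small bonus, you also prove the substantiality assertion via Proposition \ref{prop:substantial} and make the totally geodesic exclusions ($x_0=0$, respectively $c_1=0$, where strictly it is the associated space-like plane $V$, not the subspace of $\R^{n+1}$, that lies in $W_2$) explicit, points the paper's proof leaves implicit.
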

\proof By Proposition \ref{umb3}, if $\Sf^n(x_c;r)\cap \mathcal{H}(N;c)$ and $ \Sf^n(x_{\tilde{c}};\tilde{r})\cap \mathcal{H}(\tilde{N};\tilde{c})$, with $c,\tilde{c}>0$ and $r,\tilde{r}>0$, are congruent, then ${\displaystyle \frac{\|{x}_{\tilde{c}}\|}{\|{x}_c\|}=\frac{\tilde{r}}{r}=\frac{\tilde{c}}{c}}$. Thus
${\displaystyle \frac{\tilde{c}^2+\|x_0\|^2}{{c}^2+\|x_0\|^2}=\frac{\|{x}_{\tilde{c}}\|^2}{\|{x}_c\|^2}=\frac{\tilde{c}^2}{c^2}}$,
which implies that $\tilde{c}=c$, and hence $\tilde{r}=r$. 

Similarly, Proposition \ref{umb3} also implies that $\Sf^n(x_0;r)\cap \mathcal{H}(x_0/\|x_0\|;\|x_0\|)$ and $\Sf^n(x_0;\tilde r)\cap \mathcal{H}(x_0/\|x_0\|;\|x_0\|)$ are congruent if and only if $r=\tilde r$.

 Now, given any $\Sf^n(\tilde{x}_0;\tilde{r})\cap \mathcal{H}(\tilde{N};\tilde{c})$ with
 $\|\tilde{x}_0\|\neq 0$, set ${\displaystyle \lambda^2=\|\tilde{x}_0\|^2}/{\tilde{c}^2}$.
 Notice that $\lambda\geq 1$ by the Cauchy-Schwarz inequality, for $\<\tilde N, \tilde{x}_0\>=\tilde c$. 
If $\lambda^2=1$, that is,  $\tilde{c}^2=\|\tilde{x}_0\|^2$, then 
 $\Sf^n(\tilde{x}_0;\tilde{r})\cap \mathcal{H}(\tilde{N};\tilde{c})$ is congruent to 
 $\Sf^n(x_0;r)\cap \mathcal{H}(x_0/\|x_0\|;\|x_0\|)$ for ${\displaystyle r=\frac{\tilde r\|x_0\|}{\|\tilde{x}_0\|}}$.
  
  If $\lambda^2>1$, choose ${\displaystyle c=\sqrt{\frac{\|x_0\|^2}{\lambda^2-1}}}$ and ${\displaystyle r=\frac{c\tilde{r}}{\tilde{c}}}$. Then  
  ${\displaystyle \frac{\|\tilde{x}_0\|}{\|x_c\|}=\frac{\tilde{r}}{r}=\frac{\tilde{c}}{c},}$
 hence $\Sf^n(\tilde{x}_0;\tilde{r})\cap \mathcal{H}(\tilde{N};\tilde{c})$ is congruent to 
$\Sf^n(x_c;r)\cap \mathcal{H}(N;c)$ by Proposition \ref{umb3}. 

  Finally, any $\mathcal{H}(N_1;c_1)\cap\mathcal{H}(N_2;0)$ with $c_1\neq 0$ is congruent to 
$\Sf^n(x_c;r)\cap \mathcal{H}(N;c)$ with $c=0$ and $r=\|x_0\|$.
\vspace{1ex}\qed

In the remaining of this section we prove a version of Corollary \ref{cor:cod2snr} for the conformal model $(\R^{n+1}\setminus \R^{k-1}, g)$ of $\Hy^k\times \Sf^{n-k+1}$,  $2\leq k\leq n$. As a particular case,  it provides a classification of the congruence classes of all umbilical  submanifolds  of  $\Hy^n\times \Sf^1$, and hence of $\Hy^n\times \R$. First we make the following observation.

\begin{lemma}\label{eigenbound}
Let $v\in\R^m$ be such that $\|v\|\leq 1$. Then, for any $w\in \R^m$, the smallest eigenvalue of the Gram matrix $G$ of $\{v, w\}$ satisfies $0\leq \bar\lambda\leq 1$, and $\bar \lambda= 1$ if and only if $\<v,w\>=0$, $\|v\|=1$ and $\|w\|\geq 1$.
\end{lemma}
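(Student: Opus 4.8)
The plan is to extract everything from the $2\times 2$ Gram matrix together with the Rayleigh quotient. Writing $A=[\,v\ w\,]$ for the $m\times 2$ matrix with columns $v$ and $w$, the Gram matrix is $G=A^tA=\left(\begin{smallmatrix}\|v\|^2 & \langle v,w\rangle\\ \langle v,w\rangle & \|w\|^2\end{smallmatrix}\right)$, which is positive semidefinite; hence both eigenvalues are nonnegative and $\bar\lambda\geq 0$ comes for free. For the upper bound I would use the variational characterization $\bar\lambda=\min_{\|y\|=1}y^tGy=\min_{\|y\|=1}\|y_1v+y_2w\|^2$ and test it against the unit vector $y=(1,0)$, which immediately gives $\bar\lambda\leq\|v\|^2\leq 1$. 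A single test vector thus settles the entire inequality $0\leq\bar\lambda\leq 1$.

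For the equality discussion the chain $\bar\lambda\leq\|v\|^2\leq 1$ is exactly what I would exploit: if $\bar\lambda=1$ then necessarily $\|v\|^2=1$, and in addition $y=(1,0)$ realizes the minimum, so it is an eigenvector of $G$ for the smallest eigenvalue $\bar\lambda=1$. The identity $Ge_1=e_1$ reads $(\|v\|^2,\langle v,w\rangle)=(1,0)$, which yields $\langle v,w\rangle=0$ and reconfirms $\|v\|=1$. Since $\mathrm{tr}\,G=\|v\|^2+\|w\|^2$, the other eigenvalue equals $\|w\|^2$, and because $\bar\lambda$ is the smaller of the two we get $\|w\|^2\geq\bar\lambda=1$, i.e.\ $\|w\|\geq 1$. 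If one prefers to avoid eigenvectors, the same conclusions follow from $\mathrm{tr}\,G=\bar\lambda+\lambda_+$ and $\det G=\bar\lambda\lambda_+=\|v\|^2\|w\|^2-\langle v,w\rangle^2$: substituting $\|v\|^2=1$ and $\bar\lambda=1$ into the determinant forces $\langle v,w\rangle=0$, while $\lambda_+\geq\bar\lambda$ forces $\|w\|\geq 1$.

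The converse is immediate: under $\langle v,w\rangle=0$, $\|v\|=1$ and $\|w\|\geq 1$ the matrix is already diagonal, $G=\mathrm{diag}(1,\|w\|^2)$ with $\|w\|^2\geq 1$, so its smallest eigenvalue is precisely $1$. I do not expect a genuine obstacle here; the only points that deserve a word of care are the standard fact that a unit vector attaining the Rayleigh minimum is an eigenvector for the smallest eigenvalue, which I would cite rather than reprove, and the observation that it is Cauchy--Schwarz ($\det G=\|v\|^2\|w\|^2-\langle v,w\rangle^2\geq 0$) that keeps $G$ positive semidefinite even when $v$ and $w$ are linearly dependent.
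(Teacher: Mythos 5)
Your proposal is correct and follows essentially the same route as the paper's proof: both bound $\bar\lambda$ by the Rayleigh quotient of $G=A^tA$ evaluated at $e_1$, deduce at equality that $e_1$ is an eigenvector (whence $\langle v,w\rangle=0$ and the other eigenvalue is $\|w\|^2\geq 1$), and dismiss the converse as immediate. Your direct computation $Ge_1=e_1$ and the optional trace/determinant variant are only cosmetic differences from the paper's phrasing.
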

\begin{proof} Given any $w\in \R^m$, let $A\colon \R^2\to \R^m$ be the linear map such that $Ae_1=v$ and $Ae_2=w$, where $\{e_1, e_2\}$ is the canonical basis of $\R^2$. Then the eigenvalues of $G$ are the eigenvalues of $A^tA\colon \R^2\to \R^2$. Thus its smallest eigenvalue $\bar \lambda$ satisfies
$$1\geq \|v\|^2=\<A^tAe_1,e_1\>\geq \mbox{min}\left\lbrace\frac{\<A^tAv,v\>}{\<v,v\>}\;:\:v\neq 0
\right\}=\bar\lambda\geq 0.$$
Moreover, if $\bar\lambda=1$, then $\|v\|=1$ and $e_1$ is an eigenvector of $A^tA$ associated to  $\bar\lambda$. Thus $e_2$ is also an eigenvector of $A^tA$, and hence $\<v, w\>=\<A^tAe_1, e_2\>=0$.
In particular, the eigenvalues of $A^tA$ are $1=\|v\|^2$ and $\|w\|^2$, and hence $\|w\|\geq 1$. The ``if" part is trivial.\qed
\end{proof}

\begin{corollary}\label{cor:cod2case}
Let $p_0\in \R^{n+1}\setminus\R^{k-1}$, $2\leq k\leq n$, be such that $p_0=p_0^\perp$. Let $e\in (\R^{k-1})^\perp$ and $\eta\in\R^{k-1}$  be such that $\|e\|=\|\eta\|=1$ and $\<p_0,e\>=0$. 
Set 
$$x_c=p_0+\sqrt{1+c^2}\,\eta\;\;\;\mbox{and}\;\;\;N_c=\frac{1}{\sqrt{1+c^2}}e+\frac{c}{\sqrt{1+c^2}}\eta, \;c\geq0.$$ 
Then the two-parameter family
\begin{equation}\label{two}\{\Sf^n(x_c,r)\cap \mathcal{H}(N_c,c)\colon c> 0,\;\;0<r<\sqrt{1+c^2}\|p_0\|\}\end{equation}
and the one-parameter families 
\begin{align}\label{one}\{\Sf^n(x_c,\sqrt{1+c^2}\|p_0\|)&\cap \mathcal{H}(N_c,c)\colon c> 0\},\;\;\; \{\Sf^n(p_0,r)\cap\mathcal{H}(e,0)\colon r>0\}\nonumber\\
&\mbox{and}\;\;\;\{\Sf^n(p_0,r)\cap\mathcal{H}(\eta,0)\colon r>0\}
\end{align}
contain, together,  precisely one representative of each congruence class of umbilical non-totally geodesic submanifolds of codimension two in $(\R^{n+1}\setminus\R^{k-1},g)$.
\end{corollary}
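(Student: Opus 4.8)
The plan is to reduce everything to the algebraic criterion of Proposition~\ref{alg}, exactly as already exploited in Theorem~\ref{thm:congclasses} and Proposition~\ref{umb1}. For a codimension-two umbilical submanifold $S$ represented as $\Sf^n(x_0;r)\cap\mathcal{H}(N;c)$ with $\<N,x_0\>=c$, the correspondent space-like plane is $V=\spa\{\xi_1,\xi_2\}$ with $\xi_1,\xi_2$ as in the proof of Proposition~\ref{umb1}, and the governing invariant is the unordered pair of eigenvalues of the Gram matrix $G^\perp=(\<\xi_i^\perp,\xi_j^\perp\>)$, whose trace and determinant are precisely the two quantities appearing in (\ref{eq:congcond}). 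By Proposition~\ref{alg} this eigenvalue pair, \emph{together with} the dimension of $\spa\{\xi_1^T,\xi_2^T\}$ (equivalently, by Proposition~\ref{prop:substantial}, with the knowledge of which of $V\cap W_1$, $V\cap W_2$ vanishes), is a complete congruence invariant. So I would first compute these invariants for the four families, then show that distinct members carry distinct invariants (injectivity), and finally that every congruence class of a non-totally-geodesic codimension-two submanifold is realized (surjectivity).

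For the computation, the orthogonality relations $\<p_0,e\>=0$ and $p_0,e\perp\eta$ make $G^\perp$ diagonal in each case: the two-parameter family (\ref{two}) gives eigenvalues $\|p_0\|^2/r^2$ and $1/(1+c^2)$; the first family in (\ref{one}) the double eigenvalue $1/(1+c^2)$; the second the pair $\{\|p_0\|^2/r^2,1\}$; and the third the pair $\{\|p_0\|^2/r^2,0\}$. A short inspection of $\xi_2^T$ then identifies the type of each: in (\ref{two}) and in the first family of (\ref{one}) one has $\xi_2^T\neq 0$ and independent from $\xi_1^T$, so $V\cap W_1=V\cap W_2=0$ and $S$ is substantial of codimension two; in the second family $N=e$ forces $\xi_2^T=0$, so $V\cap W_2\neq 0=V\cap W_1$; in the third family $N=\eta$ forces $\xi_2^\perp=0$, so $V\cap W_1\neq 0=V\cap W_2$. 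In particular all four families consist of non-totally-geodesic submanifolds, since by Corollary~\ref{cor:totgeosub} total geodesy would require $V=(V\cap W_1)\oplus(V\cap W_2)$, which never holds here.

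Injectivity then follows almost formally: the three substantiality types just described are mutually exclusive, so the only possible coincidences are within a fixed type; the two families of substantial type are separated by whether the eigenvalues are distinct (in (\ref{two})) or equal (the first family of (\ref{one})), and within each family the assignment of eigenvalues to parameters is manifestly injective. The one genuinely delicate point, which I would flag explicitly, is the boundary slice $r=\|p_0\|$ of (\ref{two}): there the eigenvalue pair becomes $\{1,1/(1+c^2)\}$, which also occurs in the second family of (\ref{one}); these must \emph{not} be declared congruent, and indeed are not, because the former is substantial ($\dim\spa\{\xi_1^T,\xi_2^T\}=2$) while the latter has $\xi_2^T=0$. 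This is exactly the refinement of the eigenvalue invariant by the dimension of the timelike-part span, and it explains how the pair $\{1,\cdot\}$ can appear in two different families without contradiction.

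The heart of the argument, and the main obstacle, is surjectivity together with the exact determination of the range of achievable invariants, where Lemma~\ref{eigenbound} is essential. Given an arbitrary non-totally-geodesic codimension-two $S$ with eigenvalues $\lambda_1\geq\lambda_2\geq0$, Lemma~\ref{eigenbound} applied to the diagonal entry of norm at most one (available since the generating normals are unit vectors) yields $\lambda_2\leq 1$, and its equality clause shows that $\lambda_2=1$ forces $N^T=0$ and $c=0$, hence $\xi_2^T=0$. I would then split into cases by type: if $V\cap W_1\neq 0$, equivalently $\lambda_2=0$, match $S$ to a member of the third family of (\ref{one}) with $\|p_0\|^2/r^2=\lambda_1$; if $V\cap W_2\neq 0$, equivalently $1$ is an eigenvalue arising from a genuine vector of $V\cap W_2$, match it to the second family with $\|p_0\|^2/r^2$ equal to the remaining eigenvalue; and if $S$ is substantial then $0<\lambda_2<1$ (the equality clause of Lemma~\ref{eigenbound} ruling out $\lambda_2=1$ in this case), matching $S$ to (\ref{two}) when $\lambda_1>\lambda_2$ (solving $1/(1+c^2)=\lambda_2$ and $\|p_0\|^2/r^2=\lambda_1$, the constraint $r<\sqrt{1+c^2}\|p_0\|$ being equivalent to $\lambda_1>\lambda_2$) or to the first family of (\ref{one}) when $\lambda_1=\lambda_2$. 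In each case the matched member shares both the eigenvalue pair and the type with $S$, so Proposition~\ref{alg} yields the congruence. What remains is the bookkeeping at the boundary values $\lambda_2\in\{0,1\}$ — checking that the totally geodesic pairs $\{0,0\}$, $\{0,1\}$, $\{1,1\}$ are exactly the excluded ones and that no realized class is counted twice — and this is precisely where the type refinement and the equality clause of Lemma~\ref{eigenbound} carry the weight.
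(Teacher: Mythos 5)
Your strategy coincides with the paper's: use Proposition~\ref{congruence} and Proposition~\ref{alg} to turn congruence into equality of the eigenvalues of the perpendicular Gram matrices, refine this invariant by the dimensions of $V\cap W_1$ and $V\cap W_2$, control the range of eigenvalues with Lemma~\ref{eigenbound}, and then match invariants. Your computation of the invariants of the four families, the verification that their members are not totally geodesic, and the injectivity discussion (including the collision of eigenvalue pairs at $r=\|p_0\|$) are correct. The defects are in the surjectivity endgame. First, your closing claim that the ``totally geodesic pairs'' $\{0,0\}$, $\{0,1\}$, $\{1,1\}$ are exactly the excluded ones is false, and it contradicts your own second paragraph: by Corollary~\ref{cor:totgeosub}, total geodesy means $V=(V\cap W_1)\oplus(V\cap W_2)$, so the pair $\{1,1\}$ is totally geodesic only when $V\subset W_2$ and the pair $\{0,1\}$ only when $V$ splits as a line in $W_1$ plus a line in $W_2$; only $\{0,0\}$ forces total geodesy outright. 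Concretely, $\Sf^n\bigl(p_0/\|p_0\|;1\bigr)\cap\mathcal{H}(e;0)$ has pair $\{1,1\}$ and $\Sf^n\bigl(p_0/\|p_0\|;1\bigr)\cap\mathcal{H}(\eta;0)$ has pair $\{0,1\}$, and neither is totally geodesic (their center $p_0/\|p_0\|$ does not lie in $\R^{k-1}$); their classes are precisely those of the second and third families in \eqref{one} at $r=\|p_0\|$, which you yourself correctly showed are non--totally geodesic. Carrying out the bookkeeping as you describe it would therefore discard genuine congruence classes.

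Second, and this is the step that actually fails rather than a misstatement: in your type-$B$ matching ($V\cap W_2\neq 0$), when the remaining eigenvalue equals $1$ — the example above — Proposition~\ref{alg} cannot be invoked. There one may take $\xi_2\in V\cap W_2$, so $\xi_2^T=0$, and $\langle \xi_1^T,\xi_1^T\rangle = 1-\|\xi_1^\perp\|^2 = 0$ with $\xi_1^T\neq 0$ (if $\xi_1^T=0$ the submanifold would be totally geodesic); hence the spans of the $W_1$-components on both sides are one-dimensional and degenerate (null lines), violating condition $(i)$ of Proposition~\ref{alg}, which requires both spans nondegenerate or both degenerate of dimension $m=2$. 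The congruence is still true, but it needs the separate one-vector argument for (possibly null) $W_1$-components; this is exactly why the paper routes the entire case $(\tilde N^T,\tilde c)=(0,0)$ through Proposition~\ref{umb1}$(a)(i)$ instead of through Proposition~\ref{alg}, so your appeal to ``Proposition~\ref{alg} yields the congruence'' has a genuine hole at this class. A smaller omission of the same kind: you never treat submanifolds presented as affine subspaces $\mathcal{H}(N_1;c_1)\cap\mathcal{H}(N_2;0)$; the paper disposes of them by applying an inversion centered at a point of $\R^{k-1}$ off the subspace, which is an isometry of $(\R^{n+1}\setminus\R^{k-1},g)$ turning them into spheres, and some such reduction (or an explicit extension of your invariant computation to this representation) must be included for the surjectivity claim to cover all umbilical submanifolds.
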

\begin{proof}
First notice that in the family \eqref{two} we have $\<x_c^\perp,N_c^\perp\>=0$. Then the Gram matrix of $\{r^{-1}x_c^\perp, N_c^\perp\}$ is always diagonal with distinct eigenvalues $\|p_0\|^2/r^2$ and $(1+c^2)^{-1}$. Moreover, since we are taking $r<\sqrt{1+c^2}\|p_0\|$, then $(1+c^2)^{-1}$ is the smallest one.

Assume that $\Sf^n(x_c,r)\cap \mathcal{H}(N_c,c)$ and $\Sf^n(x_{\tilde{c}},\tilde{r})\cap \mathcal{H}(N_{\tilde{c}},\tilde{c})$ belong to the family \eqref{two} and are congruent. 
Then the eigenvalues of the Gram matrices of $\{r^{-1}x_c^\perp,N_c^\perp\}$ and $\{\tilde{r}^{-1}x_{\tilde{c}}^\perp,N_{\tilde{c}}^\perp\}$, in particular their smallest ones, coincide, and hence $c=\tilde{c}$ and $r=\tilde{r}$. Similarly, one can check that distinct elements  in any of the one-parameter families in \eqref{one} lie in distinct congruence classes, as well as elements of  
distinct families.

Let $\Sf^n(\tilde{x}_0,\tilde{r})\cap \mathcal{H}(\tilde{N},\tilde{c})$ be an umbilical non-totally geodesic submanifold with codimension two of $(\R^{n+1}\setminus\R^{k-1},g)$ and
let $\tilde{G}$ be the Gram matrix of $\{\tilde{r}^{-1}\tilde{x}_0^\perp, \tilde{N}^\perp\}$. Since $\|\tilde{N}^\perp\|\leq 1$,  we see from Lemma \ref{eigenbound} that the smallest eigenvalue $\tilde{\lambda}$ of $\tilde{G}$ satisfies $\tilde{\lambda}\leq 1$. 
If $(\tilde{N}^T,\tilde{c})=(0,0)$, then $\|\tilde{N}^\perp\|= 1$ and $\<\tilde{x}_0^\perp,\tilde{N}\>=0$. Therefore, in this case Proposition \ref{umb1} implies that $\Sf^n(\tilde{x}_0,\tilde{r})\cap \mathcal{H}(\tilde{N},\tilde{c})$ is congruent to $\Sf^n(p_0,r)\cap\mathcal{H}(e,0)$ with $r=\tilde{r}\|p_0\|/\|\tilde{x}_0^\perp\|$.
Notice that, if $\tilde{\lambda}=1$,  then $\<\tilde{x}_0^\perp,\tilde{N}^\perp\>=0$ and $\|\tilde{N}^\perp\|=1$ by the second assertion in Lemma \ref{eigenbound}, implying that $(\tilde{N}^T,\tilde{c})=(0,0)$.

From now on we assume that $(\tilde{N}^T,\tilde{c})\neq(0,0)$, and thus $\tilde{\lambda}<1$.
If $\tilde{\lambda}=0$, then  
$\Sf^n(\tilde{x}_0,\tilde{r})\cap \mathcal{H}(\tilde{N},\tilde{c})$ is congruent to $\Sf^n(p_0,r)\cap\mathcal{H}(\eta,0)$ with $r^2=\|p_0\|^2(\tilde{r}^{-2}\|\tilde{x}_0^\perp\|^2+\|\tilde{N}^\perp\|^2)^{-1}$. 
Now assume that $0<\tilde{\lambda}< 1$. If $\tilde{\lambda}$ has multiplicity $2$, then $\Sf^n(\tilde{x}_0,\tilde{r})\cap \mathcal{H}(\tilde{N},\tilde{c})$ is congruent to $\Sf^n(x_c,\sqrt{1+c^2}\|p_0\|)\cap \mathcal{H}(N_c,c)$ with $c=\sqrt{\tilde{\lambda}^{-1}-1}$.  Finally, if $\tilde{G}$ has two distinct eigenvalues $\tilde{\lambda}<\tilde{\lambda}'$, then $\Sf^n(\tilde{x}_0,\tilde{r})\cap \mathcal{H}(\tilde{N},\tilde{c})$ is congruent to $\Sf^n(x_c,r)\cap \mathcal{H}(N_c,c)$ with
$c=\sqrt{\tilde{\lambda}^{-1}-1}$ and $r^2=\|p_0\|^2/\tilde{\lambda}'$.

As for an umbilical non-totally geodesic  $\mathcal{H}(N_1;c_1)\cap\mathcal{H}(N_2;0)$, first notice that, since it is not totally geodesic, it can not contain $\R^{k-1}$.
Then, its image by an inversion with respect to a hypersphere centered at some point $y\in\R^{k-1}\setminus\mathcal{H}(N_1;c_1)\cap\mathcal{H}(N_2;0)$ is a non-totally geodesic $\Sf^n(\tilde{x}_0,\tilde{r})\cap \mathcal{H}(\tilde{N},\tilde{c})$, which is  congruent to some element of the family \eqref{two} or one of the families in \eqref{one} by the part that has already been proved.
\end{proof}
\begin{remark}\emph{In Corollary \ref{cor:cod2case},  the elements of the families \eqref{two} and  \eqref{one}  that are substantial are those in the family  \eqref{two} and  those of the first family in \eqref{one}. }
\end{remark}

\section{Umbilical submanifolds of $\Hy^k\times \mathbb{S}^{n-k+1}$ as rotational submanifolds.}

The next result shows that any umbilical submanifold  of $\Hy^k\times \mathbb{S}^{n-k+1}$ with substantial codimension $p$ (which is necessarily less than or equal to $\mbox{min}\,\{k+1, n-k+2\}$ by Corollary~\ref{substantial}) is a rotational submanifold in a suitable sense, under mild further restrictions on $p$ and $k$. 

Given a space-like subspace $V\subset  \Le^{n+3}=W_1\oplus W_2\supset \Hy^k\times \mathbb{S}^{n-k+1}$, for $1\leq i\leq 2$ set  $U_i=V^\perp\cap W_i$, and let $U_i^\perp$ be the orthogonal complement of $U_i$ in $W_i$. Denote by $O(U^\perp_1)$ (respectively, $O(U^\perp_2)$) the subgroup of  all elements of $O_1(k+1)$ (respectively, $O(n-k+2)$) that fix $U^\perp_1$ (respectively, $U^\perp_2$) pointwise.

\begin{proposition}\label{prop: rot}  Let $S=(\Hy^k\times \mathbb{S}^{n-k+1})\cap V^\perp$, where $V\subset \Le^{n+3}$ is a space-like subspace of dimension $p$, be a substantial umbilical submanifold with codimension $p$ of $\Hy^k\times \mathbb{S}^{n-k+1}\subset \Le^{n+3}$. Then the following assertions hold:
\begin{itemize}
    \item If $p\leq k-1$ then the subgroup  $\{I\}\times O(U^\perp_2)$ of $O_1(k+1)\times O(n-k+2)$  leaves $S$ invariant and acts on $S$ with cohomogeneity $n-k+1$, the orbit through (a regular point) $(x,y)\in S$ being the $(k-p)$-dimensional umbilical submanifold  $V^\perp \cap   (\mathbb{H}^{k}\times \{y\})$;
    \item If $p\leq n-k$ then the subgroup  $\{I\}\times O(U^\perp_2)$ of $O_1(k+1)\times O(n-k+2)$  leaves $S$ invariant and acts on $S$ with cohomogeneity $k$, the orbit through (a regular point) $(x,y)\in S$ being the $(n-k-p+1)$-dimensional sphere  $V^\perp \cap (\{x\}\times \mathbb{S}^{n-k+1})$.
\end{itemize}
\end{proposition}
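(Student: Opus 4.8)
The plan is to exploit substantiality to pin down the subspaces $U_i$ and $U_i^\perp$, to show that the relevant rotation subgroup fixes $V$ \emph{pointwise} (so that it automatically preserves $V^\perp$ and hence $S$), and finally to identify each orbit by a direct orthogonality computation together with the transitivity of the orthogonal group of $U_i$ on its pseudospheres. I treat the second bullet in detail; the first follows by the same argument with the roles of the two factors interchanged, using the subgroup $O(U_1^\perp)\times\{I\}$.

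First I would record the consequences of substantiality. By Proposition \ref{prop:substantial}, $V\cap W_1=0=V\cap W_2$, so the restrictions of the orthogonal projections $\pi_{W_1}$ and $\pi_{W_2}$ to $V$ are injective and $\dim\pi_{W_i}(V)=p$. Since the Lorentzian inner product is nondegenerate, $(\,\cdot\,)^{\perp}$ is involutive on each $W_i$, whence $U_i^\perp=\pi_{W_i}(V)$ and $\dim U_1=k+1-p$, $\dim U_2=n-k+2-p$. The crucial observation is the pair of inclusions
\[
V\subseteq W_1\oplus U_2^\perp\qquad\text{and}\qquad V\subseteq U_1^\perp\oplus W_2 ,
\]
which are immediate, since $\pi_{W_2}(\xi)\in\pi_{W_2}(V)=U_2^\perp$ and $\pi_{W_1}(\xi)\in U_1^\perp$ for every $\xi\in V$. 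From the first inclusion, an element $(I,B)$ of $\{I\}\times O(U_2^\perp)$ fixes $W_1$ pointwise and $U_2^\perp$ pointwise, hence fixes $W_1\oplus U_2^\perp$ and therefore $V$ pointwise; thus it preserves $V^\perp$, and since it also preserves $\Hy^k\times\Sf^{n-k+1}$ it leaves $S$ invariant. The symmetric argument with $O(U_1^\perp)\times\{I\}$ and the second inclusion handles the first bullet.

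For the orbit through $(x,y)\in S$ — where a point of the product is the light-like vector $x+y$ with $x\in\Hy^k\subset W_1$ and $y\in\Sf^{n-k+1}\subset W_2$ — I would compute directly that, for $y'\in\Sf^{n-k+1}$, one has $x+y'\in V^\perp$ if and only if $\langle y'-y,\pi_{W_2}(\xi)\rangle=0$ for all $\xi\in V$, i.e. if and only if $y'-y\in U_2$ (using $x+y\in V^\perp$ and $\pi_{W_2}(V)=U_2^\perp$). Hence $V^\perp\cap(\{x\}\times\Sf^{n-k+1})=\{x+y':y'\in\Sf^{n-k+1},\ y'-y\in U_2\}$, and symmetrically $V^\perp\cap(\Hy^k\times\{y\})=\{x'+y:x'\in\Hy^k,\ x'-x\in U_1\}$, which is the affine slice $\Hy^k\cap(x+U_1)$. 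The orbit of $(x,y)$ under $\{I\}\times O(U_2^\perp)$ is $\{x+By:B\in O(U_2^\perp)\}$; since such $B$ fixes $U_2^\perp$ pointwise, it moves $y$ precisely within $\{y':\|y'\|=1,\ y'-y\in U_2\}$. Because $W_2$ is positive definite, $O(U_2^\perp)$ restricts to the full orthogonal group of $U_2$, which acts transitively on each Euclidean sphere of $U_2$; this identifies the orbit with $V^\perp\cap(\{x\}\times\Sf^{n-k+1})$, a sphere of dimension $\dim U_2-1=n-k-p+1$. A dimension count then gives the cohomogeneity: $\dim S=n+1-p$, so it equals $(n+1-p)-(n-k-p+1)=k$, as claimed.

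The routine part is therefore the second bullet, where positive-definiteness of $W_2$ makes transitivity on spheres immediate. The main obstacle is the first bullet, where $W_1$ is Lorentzian: here I must check the causal character of $U_1$ and verify that the level set $\{v\in U_1:\langle v,v\rangle=\langle x_{U_1},x_{U_1}\rangle\}$ meeting $\Hy^k$ is a single future sheet, so that the subgroup of $O(U_1^\perp)$ that preserves $\Hy^k$ (its orthochronous part) acts transitively on it. The resulting orbit is the slice $\Hy^k\cap(x+U_1)$, which is umbilical but in general \emph{not} totally geodesic in $\Hy^k\cong\Hy^k\times\{y\}$, of dimension $k-p$, giving cohomogeneity $(n+1-p)-(k-p)=n-k+1$. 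Finally, the hypotheses $p\le k-1$ and $p\le n-k$ are exactly what guarantee that the respective orbits $\Hy^k\cap(x+U_1)$ and $V^\perp\cap(\{x\}\times\Sf^{n-k+1})$ are positive-dimensional, so that the actions genuinely realize $S$ as a rotational submanifold.
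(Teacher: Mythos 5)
Your proposal is correct and follows essentially the same route as the paper's own proof: substantiality (Proposition \ref{prop:substantial}) gives that $U_i$ has codimension $p$ in $W_i$, the subgroup fixing $U_i^\perp$ pointwise fixes $V$ pointwise and hence preserves $S$, and the orbits through a regular point are identified with the slices $V^\perp\cap(\Hy^k\times\{y\})$ and $V^\perp\cap(\{x\}\times\Sf^{n-k+1})$, with the stated dimension and cohomogeneity counts. In fact you supply details the paper leaves implicit --- the inclusions $V\subseteq W_1\oplus U_2^\perp$ and $V\subseteq U_1^\perp\oplus W_2$, the explicit orbit computation, and the transitivity argument (your flagged check of the Lorentzian case does go through in all three causal types of $U_1$) --- and you correctly work with $O(U_1^\perp)\times\{I\}$ in the first bullet, where the statement's ``$\{I\}\times O(U_2^\perp)$'' is evidently a typo.
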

\proof  Since $S=(\Hy^k\times \mathbb{S}^{n-k+1})\cap V^\perp$ for some space-like subspace $V\subset \Le^{n+3}$ with dimension $p$, any isometry $T\in O_1(k+1)\times O(n-k+2)$ of  
$\Hy^k\times \mathbb{S}^{n-k+1}$ such that $T(V^\perp)=V^\perp$ leaves $S$ invariant. 
Since $S$ is in substantial codimension $p$, then $V\cap W_i=\{0\}$ for $1\leq i\leq 2$ by Proposition \ref{prop:substantial}, hence $U_i=V^\perp\cap W_i$ has codimension $p$ in $W_i$ for $1\leq i\leq 2$. Therefore, if $p\leq k-1$ (respectively, if $p\leq n-k$), then $\dim U_1\geq 2$ (respectively, $\dim U_2\geq 2$), hence $O(U^\perp_1)\times \{I\}$ (respectively, $\{I\}\times  O(U^\perp_2)$ leaves $S$ invariant, the orbit of  (a regular point)  $(x,y)\in S$ being  the $(k-p)$-dimensional (respectively, $(n-k-p+1)$-dimensional) umbilical submanifold $V^\perp \cap  (\mathbb{H}^k\times \{y\})$ (respectively, sphere $V^\perp \cap (\{x\}\times \mathbb{S}^{n-k+1})$).\qed

\begin{corollary}\label{prop:geometry}
Let $S=(\Hy^k\times \mathbb{S}^{n-k+1})\cap V^\perp$ be as in Proposition \ref{prop: rot}.
\begin{itemize}
\item[(i)] If $p\leq \mbox{min}\,\{k-1, n-k\}$, then
the subgroup $O(U^\perp_1)\times O(U^\perp_2)$ of $O_1(k+1)\times O(n-k+2)$ leaves $S$ invariant  and acts on $S$ with cohomogeneity $p$, 
the orbit through (a regular point) $(x,y)\in S$ being the product 
$(V^\perp \cap  (\mathbb{H}^k\times \{y\})) \times ( V^\perp \cap (\{x\}\times \mathbb{S}^{n-k+1}))$; 

\item[(ii)] If $k=1=p$, then the subgroup $\{I\}\times O(U^\perp_2)$ of $O_1(2)\times O(n+1)$  leaves $S$ invariant and acts on $S$ with cohomogeneity $1$, the orbit through (a regular point) $(x,y)\in S$ being the $(n-1)$-dimensional sphere $V^\perp \cap (\{x\}\times \mathbb{S}^{n})$;
\item[(iii)] If $k=1$, $p=2$ and $n\geq 3$, then the subgroup $\{I\}\times O(U^\perp_2)$  of $O_1(2)\times O(n+1)$  leaves $S$ invariant and acts on $S$ with cohomogeneity $1$, the orbit through (a regular point) $(x,y)\in S$ being the $(n-2)$-dimensional sphere $V^\perp \cap (\{x\}\times \mathbb{S}^{n})$;
\item[(iv)]  If $k=n$ and $p=1$, then the subgroup $O(U^\perp_1)\times \{I\}$ of $O_1(n+1)\times O(2)$  leaves $S$ invariant and acts on $S$ with cohomogeneity $1$, the orbit through (a regular point) $(x,y)\in S$ being the $(n-1)$-dimensional umbilical submanifold $V^\perp \cap (\mathbb{H}^{n}\times \{y\})$;
 \item[(v)] If $k=n\geq 3$ and $p=2$, then the subgroup $O(U^\perp_1)\times \{I\}$ of $O_1(n+1)\times O(2)$  leaves $S$ invariant and acts on $S$ with cohomogeneity $1$, the orbit through (a regular point) $(x,y)\in S$ being the $(n-2)$-dimensional umbilical submanifold $V^\perp \cap (\mathbb{H}^{n}\times \{y\})$.
 \end{itemize}
\
\end{corollary}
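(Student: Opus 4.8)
The plan is to derive all five items directly from Proposition~\ref{prop: rot}, which already handles the two one-sided actions of $O(U^\perp_1)\times\{I\}$ and $\{I\}\times O(U^\perp_2)$. Two facts will be used repeatedly. First, since $S$ has codimension $p$ in the $(n+1)$-dimensional product, $\dim S=n+1-p$. Second, substantiality gives $V\cap W_i=\{0\}$ by Proposition~\ref{prop:substantial}, so $\dim U_i=\dim W_i-p$; as $\dim W_1=k+1$ and $\dim W_2=n-k+2$, the conditions $\dim U_1\geq 2$ and $\dim U_2\geq 2$ under which Proposition~\ref{prop: rot} produces a genuine orbit read precisely as $p\leq k-1$ and $p\leq n-k$, respectively.

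For item (i), both hypotheses $p\leq k-1$ and $p\leq n-k$ hold, so by Proposition~\ref{prop: rot} each one-sided subgroup leaves $S$ invariant; since they act on the mutually orthogonal summands $W_1$ and $W_2$ they commute, and the group they generate is exactly $O(U^\perp_1)\times O(U^\perp_2)$, which therefore leaves $S$ invariant. The key point I would verify is that the orbit of a regular point $(x,y)$ under the product group is the metric product of the two one-sided orbits. Set-theoretically $(A,B)\cdot(x,y)=(Ax,By)$, and Proposition~\ref{prop: rot} identifies $\{Ax\}$ and $\{By\}$ with the $\Hy^k$- and $\Sf^{n-k+1}$-components of $V^\perp\cap(\Hy^k\times\{y\})$ and $V^\perp\cap(\{x\}\times\Sf^{n-k+1})$. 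That every resulting pair $(x',y')$ again lies in $S$ follows from a one-line bilinearity check: writing $\xi=\xi^T+\xi^\perp\in V$ with $\xi^T\in W_1$ and $\xi^\perp\in W_2$, the relations $\langle x',\xi^T\rangle=-\langle y,\xi^\perp\rangle$ and $\langle y',\xi^\perp\rangle=-\langle x,\xi^T\rangle$ coming from $(x',y),(x,y')\in V^\perp$ sum to $\langle(x',y'),\xi\rangle=-\langle(x,y),\xi\rangle=0$. Hence the orbit is the asserted product, of dimension $(k-p)+(n-k-p+1)=n-2p+1$, and the cohomogeneity equals $\dim S-(n-2p+1)=(n+1-p)-(n-2p+1)=p$.

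Items (ii)--(v) I would obtain as specializations of a single assertion of Proposition~\ref{prop: rot} at the extreme values $k=1$ or $k=n$. When $k=1$ one has $W_1=\spa\{v,w\}$ of dimension $2$, so the assertion giving the $\Hy^k$-orbit cannot apply (it would force $p\leq k-1=0$) and only the one producing the spherical orbit is available; it yields cohomogeneity $k=1$ and orbit $V^\perp\cap(\{x\}\times\Sf^{n})$ of dimension $n-k-p+1$, namely $n-1$ when $p=1$ and $n-2$ when $p=2$, the constraint $p\leq n-k$ reading as $n\geq 2$ and $n\geq 3$ respectively; this is exactly (ii) and (iii). Symmetrically, when $k=n$ one has $\dim W_2=n-k+2=2$, only the assertion producing the $\Hy^n$-orbit applies, and it gives cohomogeneity $n-k+1=1$ and orbit $V^\perp\cap(\Hy^{n}\times\{y\})$ of dimension $k-p$, which is (iv) and (v) under the same numerical constraints.

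I do not anticipate a serious obstacle: the only substantive verification is the product-orbit claim in (i), and it reduces to the short bilinearity computation above showing that $V^\perp$ is compatible with the splitting $\Hy^k\times\Sf^{n-k+1}$. Everything else is dimension bookkeeping together with a check of which of the two inequalities $p\leq k-1$ and $p\leq n-k$ survives at the boundary values $k=1$ and $k=n$.
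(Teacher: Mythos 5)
Your proposal is correct and follows exactly the route the paper intends: the paper states this result as an immediate consequence of Proposition~\ref{prop: rot} and offers no separate proof, and your write-up simply fills in the omitted bookkeeping (invariance of $S$ under the group generated by the two commuting one-sided subgroups, the product structure of the orbit, and the dimension counts $\dim S=n+1-p$, orbit dimensions $k-p$ and $n-k-p+1$, hence cohomogeneity $p$ in case (i) and $1$ in the boundary cases). You also correctly read the first bullet of Proposition~\ref{prop: rot} as referring to $O(U^\perp_1)\times\{I\}$ (the paper's statement repeats $\{I\}\times O(U^\perp_2)$ there by an evident typo, as its own proof shows), so your specializations (ii)--(v) at $k=1$ and $k=n$ match the paper's claims precisely.
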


 It follows from parts $(iv)$ and $(v)$ of Corollary \ref{prop:geometry}  that  umbilical submanifolds  of $\Hy^n\times \mathbb{S}^{1}$  (or equivalently,   $\Hy^n\times \mathbb{R}$) with substantial codimension one (respectively, substantial codimension two), are rotational submanifolds with curves in $\Hy^1\times \mathbb{S}^{1}$ (respectively, $\Hy^2\times \mathbb{S}^{1}$) as profiles. In the next section these curves are explicitly described.

\subsection{The profile curves of umbilical submanifolds  of $\Hy^n\times \mathbb{S}^{1}$}

We first consider the codimension $1$ case, that is, umbilical non-totally geodesic hypersurfaces of $\Hy^n\times\Sf^1$. Let $(\R^{n+1}\setminus\R^{n-1},g)$ be the conformal model of $\Hy^n\times\Sf^1$ and let $\{e_1,\dots,e_{n+1}\}$ be an orthonormal basis of $\R^{n+1}$ such that $\{e_1,\dots,e_{n-1}\}$ spans $\R^{n-1}$. By Corollary \ref{cor:hypcase}, it is enough to consider the one-parameter family of hyperspheres $\{\Sf^{n}(x_0,r), r>0\}$ centered at a point $x_0$ with $x_0^\perp\neq 0$. For simplicity, we take $x_0=e_{n+1}$.
The hypersphere $\Sf^n(e_{n+1},r)$ is represented by the unit space-like vector $z=\frac{1}{r}\Psi(e_{n+1})+\frac{r}{2}w\in\Le^{n+3}$, whose $W_1$-component is
$$
z^T=\frac{1}{r}v+\frac{1}{2}\left(r-\frac{1}{r}\right)w.
$$
Thus $z^T$ is space-like, light-like or time-like, according to whether  $r>1$, $r=1$ or $r<1$, respectively. Hence, with the notations as in Proposition \ref{prop:geometry}, the subspace $U_1$ is, accordingly, time-like, degenerate or space-like, respectively. We call these cases  \emph{hyperbolic}, \emph{parabolic} or \emph{spherical},
respectively.

Let $\R^2=(\R^{n-1})^\perp=\spa\{e_n,e_{n+1}\}$ and consider the curve $c\colon I\to\R^2$ given by  
$$
c(\theta)=e_{n+1}+r\cos(\theta)e_{n+1}+r\sin(\theta)e_n,
$$
where $I=(-\pi,\pi]$, $I=(-\pi,\pi)$ or $I=[-\pi+\arccos(r),\pi-\arccos(r)]$, depending on whether we are in the hyperbolic, parabolic or spherical case, respectively.
Notice that the image $\Theta(c(I))$ under the diffeomorphism $\Theta\colon \R^{n+1}\setminus \R^{n-1}\to \Hy^n\times \Sf^{1}$ given by \eqref{eq:Theta} is contained in a totally geodesic $\Hy^1\times\Sf^1\subset\Hy^n\times\Sf^1$ and that, for each $\theta\in I$, the point $c(\theta)$ lies in a different slice $\Hy^n\times\{q\}$, $q\in\Sf^1$.
Since the the subgroup of isometries $O(U_1^\perp)\times \{I\}$ acts on each slice, 
and this action is transitive on the intersection $\Theta(\Sf^n(e_{n+1},r))\cap\Hy^n\times\{q\}$ of the umbilical hypersurface $\Theta(\Sf^n(e_{n+1},r))$ of $\Hy^n\times \mathbb{S}^{1}$ with each slice $\Hy^n\times\{q\}$, it follows that $\Theta(\Sf^n(e_{n+1},r))$ is generated by the action of
$O(U_1^\perp)\times \{I\}$ on the curve  $\Theta(c(I))$.  Notice also that if $\Theta(c(\theta))$ lies in the slice $\Hy^n\times\{q\}$, then the orbit $\Theta(\Sf^n(e_{n+1},r))\cap\Hy^n\times\{q\}$ of  $\Theta(c(\theta))$ under $O(U_1^\perp)\times \{I\}$ is an equidistant hypersurface, horosphere or sphere in that slice (which can degenerate into a point), depending on whether we are in the hyperbolic, parabolic or spherical case, respectively.
\vspace{1ex}

Let us now consider the case of umbilical submanifolds with codimension $2$. By Corollary \ref{cor:cod2case}, it is enough to consider the elements of the two families of umbilical submanifolds given by \eqref{two} and \eqref{one} that have substantial codimension $2$. Following the notations in that proposition, set $p_0=e_{n+1}$, $e=e_n$, $\eta=e_1$ and let $x_c$ and $N_c$ be given by
$$
x_c=e_{n+1}+\sqrt{1+c^2}\,e_1\;\;\mbox{and}\;\; N_c=\frac{1}{\sqrt{1+c^2}}e_n+\frac{c}{\sqrt{1+c^2}}e_1,\,\,\, c>0.$$
Let $z_1$ and $z_2$ be the unit space-like vectors associated to  
$\Sf^n(x_c,r)$ and $\mathcal{H}(N_c,c)$, respectively. Then
$\{z_1,z_2\}$ is an orthonormal basis of the space-like subspace
$V\subset \Le^{n+3}$ associated to the umbilical submanifold
$\Sf^n(x_c,r)\cap\mathcal{H}(N_c,c)$. Notice that the $W_1$
components $z_1^T$ and $z_2^T$ are orthogonal, and that $z_2^T$ is
always space-like. Moreover, $\<z_1^T,z_1^T\>=1-\frac{1}{r^2}$.
Thus the subspace $U_1\subset W_1$ is time-like, degenerate or 
space-like depending on whether $r>1$, $r=1$ or $r<1$, respectively. 
We call these cases accordingly \emph{hyperbolic},
\emph{parabolic} or \emph{spherical}.

Consider the curve $c\colon I\to \mathcal{H}(N_c,c)$ given by
$$
c(\theta)=x_c+r\cos(\theta)e_{n+1}+r\sin(\theta)\eta_c,
$$
 where
\be\label{etac}
 \eta_c=-\frac{c}{\sqrt{1+c^2}}e_n+\frac{1}{\sqrt{1+c^2}}e_1.
\ee
Here $I=(-\pi,\pi]$, $I=(-\pi,\pi)$ or $I=[-\pi+\arccos(r),\pi-\arccos(r)]$,
depending on whether we are in the hyperbolic, parabolic or spherical case, respectively.
Notice that $\Theta(c(I))$ is contained in a totally geodesic $\Hy^2\times\Sf^1\subset\Hy^n\times\Sf^1$ and that, for each $\theta\in I$, the point $\Theta(c(\theta))$ lies in a different slice $\Hy^n\times\{q\}$, $q\in\Sf^1$.
Since the the subgroup of isometries $O(U_1^\perp)\times \{I\}$ acts on each slice, and this action is transitive on the intersection $\Theta(\Sf^n(x_c,r)\cap\mathcal{H}(N_c,c))\cap\Hy^n\times\{q\}$ of the umbilical submanifold $\Theta(\Sf^n(x_c,r)\cap\mathcal{H}(N_c,c))$ with each slice $\Hy^n\times\{q\}$, it follows as before that $\Theta(\Sf^n(x_c,r)\cap\mathcal{H}(N_c,c))$ is generated by the action of $O(U_1^\perp)\times \{I\}$ on the curve  
$\Theta(c(I))$. 

As for the umbilical submanifolds in the one-parameter family $\{\Sf^n(x_c,\sqrt{1+c^2})\cap \mathcal{H}(N_c,c)$, $c>0$, they are always hyperbolic. Here we define $c\colon (-\pi,\pi]\to \mathcal{H}(N_c,c)$ by
$$
c(\theta)=x_c+\sqrt{1+c^2}\cos(\theta)e_{n+1}+\sqrt{1+c^2}\sin(\theta)\eta_c,
$$
where $\eta_c$ is given by \eqref{etac}. Then we see as in the previous case that the submanifold $\Theta(\{\Sf^n(x_c,\sqrt{1+c^2})\cap \mathcal{H}(N_c,c))$ is  generated by the action of $O(U_1^\perp)\times \{I\}$ on the curve  $\Theta(c(I))$.

\begin{remark}\emph{A similar description can be given as above of the profile curves of the umbilical submanifolds of $\mathbb{S}^n\times \mathbb{R}$, which is much simpler than those given in \cite{mt1}, \cite{st} and \cite{vv} and, in particular, avoids solving any ODE.}
\end{remark}

\noindent \emph{Acknowledgement.} After a preliminary version of this paper was completed, we learned that a classification of umbilical \emph{hypersurfaces} of $\Hy^n\times \R$ using a different method was provided in \cite{ls}.

\vspace*{5ex}

\noindent Universidade de S\~ao Paulo\\
Instituto de Ci\^encias Matem\'aticas e de Computa\c c\~ao.\\
Av. Trabalhador S\~ao Carlense 400\\
13560-970 -- S\~ao Carlos\\
BRAZIL\\
\texttt{mibieta@impa.br} and \texttt{tojeiro@icmc.usp.br}

\end{document}